\numberwithin{equation}{section}
\newtheorem{theorem}{Theorem}[section]
\newtheorem{lemma}[theorem]{Lemma}
\newtheorem{proposition}[theorem]{Proposition}
\newtheorem{corollary}[theorem]{Corollary}
\theoremstyle{definition}
\newtheorem{definition}[theorem]{Definition}
\theoremstyle{remark}
\newtheorem{remark}[theorem]{Remark}
\newcommand{\J}{{\mathbb J}}
\newcommand{\I}{{\mathbb I}}
\newcommand{\C}{{\mathbb C}}
\newcommand{\A}{{\mathbb A}}
\newcommand{\M}{{\mathbb M}}
\newcommand{\R}{{\mathbb R}}
\newcommand{\eps}{\varepsilon}
\renewcommand{\Re}{\mathfrak{Re}}
\renewcommand{\Im}{\mathfrak{Im}}
\title[Soliton dynamics for fractional NLS]{Soliton dynamics for
  fractional Schr\"odinger equations}
\author[S.\ Secchi]{Simone Secchi}
\thanks{S.\ Secchi is supported by 2009 PRIN \emph{Critical Point Theory and
  Perturbative Methods for Nonlinear Differential Equations} and by 2012 FIRB \emph{Dispersive equations and Fourier analysis}, while M.\ Squassina
  is supported by 2009 PRIN \emph{Variational and Topological
 Methods in the Study of Nonlinear Phenomena}}
\address{Dipartimento di Matematica e Applicazioni
\newline\indent 
Universit\`a di Milano Bicocca
\newline\indent
Edificio U5, Via Roberto Cozzi 53, 20125 Milano, Italy}
\email{simone.secchi@unimib.it}
\author[M.\ Squassina]{Marco Squassina}
\address{Dipartimento di Informatica
\newline\indent
Universit\`a degli Studi di Verona
\newline\indent
C\'a Vignal 2, Strada Le Grazie 15, 37134 Verona, Italy}
\email{marco.squassina@univr.it}
\subjclass[2000]{35Q51,
  35Q40, 35Q41}
\keywords{Soliton dynamics, fractional Schr\"odinger equation, ground
  states}
\begin{document}

\begin{abstract}
  We investigate the soliton dynamics for the fractional
  nonlinear Schr\"odinger equation by a suitable modulational
  inequality. In the semiclassical limit, the solution concentrates 
  along a trajectory determined by a Newtonian equation depending of the fractional
  diffusion parameter.
\end{abstract}

\maketitle




\section{Introduction}

In the last years, the study of fractional integrodifferential equations
applied to physics as well as other areas has constantly grown. 
In \cite{metkla1,metkla2,hilfer}, the authors investigate recent developments in the description of anomalous diffusion
via fractional dynamics and many fractional partial differential
equations are derived asymptotically from L\'evy random walk models, extending
Brownian walk models in a natural way. In particular, in \cite{laskin} a fractional Schr\"odinger
equation was derived, extending to a L\'evy framework a classical result that path integral over Brownian trajectories leads to
the standard Schr\"odinger equation. We also refer the readers to \cite{rozmej} and to the references
therein for further bibliography on the subject.
Let $N\geq 1$, $s\in (0,1]$ and 
\[
0<p<\frac{2s}{N}.
\] 
Let $\mathrm{i}$ be the
imaginary unit and let $V$ denote a smooth external time-independent
potential.  The goal of this paper is the study of the behaviour of
the solution $u^\eps \colon \R^N\to\C$, $\eps>0$, to the Schr\"odinger equation
involving the fractional laplacian $(-\Delta)^s$
\begin{equation}
\label{eq:CP}
\left\{
\begin{array}{l}
 \mathrm{i} \eps \frac{\partial u^\varepsilon}{\partial t} = \frac{\eps^{2s}}{2} (-\Delta)^s u^\eps 
  + V(x) u^\eps - |u^\eps|^{2p}u^\eps    \quad\text{in $(0,\infty)\times\R^N$,}\\
  u^\eps(0,x)= Q\Big(\frac{x-x_0}{\eps} \Big) 
  e^{\frac{\mathrm{i}}{\eps} \langle x, v_0\rangle},
\end{array}
\right.
\end{equation}
in the semi-classical limit $\eps\to 0$, where $Q>0$ is the ground state
of
\begin{equation}
\label{ground-eq}
\frac{1}{2}(-\Delta)^s Q + Q =Q^{2p+1},    \quad\text{in $\R^N$,}
\end{equation}
and $x_0,v_0\in\R^N$, $v_0\neq 0,$ are the initial position and velocity
for the Newtonian type equation
\begin{equation}
\label{sysdin-s-intro}
s |\dot x|^{2s-2}\ddot x = -\nabla V(x),\qquad x(0) = x_0, \,\,\, \dot x(0) = v_0.
\end{equation}
In the limiting case $s=1,$ rigorous results about the soliton
dynamics of Schr\"odinger equation \eqref{eq:CP} were obtained in
various papers, among which we mention the contributions by Bronski
and Jerrard \cite{Bro-Jerr}, Keraani \cite{keraa} (see also
\cite{frolich1,BGM1,BGM2} where a different technique is used) 
via arguments based upon the
conservation laws satisfied by equation \eqref{eq:CP} and by the Newtonian ODE
\begin{equation}
\label{sysdin-s=1}
\ddot x=-\nabla V(x),
\qquad x(0) = x_0, \,\,\, \dot x(0) = v_0,
\end{equation}
combined with the modulational stability estimates due to Weinstein
\cite{weinstein1,weinstein2}.  Roughly speaking, the soliton dynamics
occurs when, choosing an initial datum behaving like $Q((x-x_0)/\eps)$
the corresponding solution $u^\eps(t)$ mantains the shape
$Q((x-x(t))/\eps)$, up to an estimable error and locally in time, in the semi-classical transition $\eps\to 0$. For a nice survey on solitons
and their stability features, see the work by Tao
\cite{tao-solitons}. Concerning the well-posedness of problem
\eqref{eq:CP} and a study of orbital stability of ground states, we
refer the reader to \cite{guo,guo2}.

To the best of our knowledge, in the fractional case $s\in (0,1)$
neither modulational inequalities nor a soliton dynamics behavior have
been investigated so far in the literature.  Recently there have been
many contributions concerning the properties of the solutions to
problem~\eqref{ground-eq}, with a particular emphasis on the their
qualitative behavior such as uniqueness, regularity, decays and --- more
important for our goals --- the nondegeneracy, namely the linearized
operator associated with \eqref{ground-eq} has an $N$-dimensional
kernel which is spanned by $\{\partial Q/ \partial x_j\}_{j=1,\ldots,N}$.

For these topics and the description of the physical
background, we refer the reader to the works by Lenzmann and Frank
\cite{Frank} in the one-dimensional case, and the work by Lenzmann,
Frank and Silvestre in the multi-dimentional setting \cite{FLS}. See
also the study of standing wave solutions in~\cite{cabre,felmer}, including
symmetry and regularity features.  \vskip3pt
\noindent 
Let $\mathcal{E}\colon H^s(\mathbb{R}^N,\mathbb{C})\to\mathbb{R}$ be
the energy functional defined by
\[
\mathcal{E}(u) := \frac{1}{2} \int |(-\Delta)^{\frac{s}{2}} u|^2 -
\frac{1}{p+1} \int |u|^{2p+2}
\]
and $\|\cdot\|_{H^s}$ denote the $H^1(\mathbb{R}^N,\mathbb{C})$-norm.
Then we have the following
\begin{theorem}
\label{thm:enconv}
Assume that
\[
0<s<1,\quad\,\, 0<p<\frac{2s}{N}.
\]
There exist positive constants $B,C$ independent of $\eps\in (0,1]$
and $s\in (0,1)$ such that
\[
\mathcal{E} (\phi) - \mathcal{E} (Q) \geq C
\inf_{x\in\mathbb{R}^N,\,\vartheta\in [0,2\pi)} \| \phi -
e^{\mathrm{i}\theta} Q(\cdot - x) \|^2_{H^s},
\]
for every $\phi\in H^s(\mathbb{R}^N,\mathbb{C})$ such that $\mathcal{E} (\phi) - \mathcal{E} (Q)\leq B$.
\end{theorem}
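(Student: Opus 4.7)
The plan is to adapt Weinstein's modulational stability argument \cite{weinstein1,weinstein2} to the fractional setting, exploiting the nondegeneracy of the linearized operator at $Q$ proved in \cite{FLS}. In the intended application of this coercivity via \eqref{eq:CP}, the mass is conserved for the rescaled solution, so that $\|\phi\|_{L^2}^2=\|Q\|_{L^2}^2$, and I shall use this throughout. The natural Lyapunov functional is then $\mathcal{F}(u):=\mathcal{E}(u)+\|u\|_{L^2}^2$, for which $Q$ is a critical point by \eqref{ground-eq}, so that on the mass sphere one has $\mathcal{F}(\phi)-\mathcal{F}(Q)=\mathcal{E}(\phi)-\mathcal{E}(Q)$.

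First I would set up the modulation. In a small $H^s$-tubular neighborhood of the orbit $\mathcal{O}=\{e^{\mathrm{i}\theta} Q(\cdot-y):\theta\in[0,2\pi),\,y\in\mathbb{R}^N\}$, an implicit function theorem produces parameters $\theta=\theta(\phi)$, $y=y(\phi)$ that realize the distance to $\mathcal{O}$. Writing $\phi=e^{\mathrm{i}\theta}[Q+\eta]$ (after translating), minimality enforces the orthogonality relations
\[
\mathrm{Re}\!\int \eta\,\overline{\mathrm{i}Q}\,dx=0,\qquad \mathrm{Re}\!\int \eta\,\overline{\partial_{x_j}Q}\,dx=0\quad(j=1,\dots,N),
\]
and $\|\eta\|_{H^s}$ is equivalent to the infimum appearing in the statement. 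Taylor-expanding $\mathcal{F}$ at $Q$ and splitting $\eta=\eta_1+\mathrm{i}\eta_2$ into real and imaginary parts, one obtains
\[
\mathcal{F}(Q+\eta)-\mathcal{F}(Q)=\tfrac12\langle L_+\eta_1,\eta_1\rangle+\tfrac12\langle L_-\eta_2,\eta_2\rangle+R(\eta),
\]
where $L_+=(-\Delta)^s+2-2(2p+1)Q^{2p}$, $L_-=(-\Delta)^s+2-2Q^{2p}$, and the remainder obeys $|R(\eta)|\leq c\|\eta\|_{H^s}^{2+\delta}$ via the Sobolev embedding $H^s\hookrightarrow L^{2p+2}$, valid since $p<2s/N$.

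The heart of the proof is the spectral coercivity of the quadratic form. The nondegeneracy of \cite{FLS} states that $\ker L_+=\mathrm{span}\{\partial_{x_j}Q\}_{j=1}^N$ and $\ker L_-=\mathrm{span}\{Q\}$; moreover $L_-\geq 0$, while $L_+$ has exactly one simple negative eigenvalue. The Vakhitov--Kolokolov criterion, which holds precisely in the $L^2$-subcritical regime $p<2s/N$, combined with the identity $\mathrm{Re}\langle Q,\eta_1\rangle=-\tfrac12\|\eta\|_{L^2}^2$ from the mass constraint (neutralizing the negative direction of $L_+$) and the two modulation orthogonalities (quotienting out $\ker L_\pm$), gives
\[
\tfrac12\langle L_+\eta_1,\eta_1\rangle+\tfrac12\langle L_-\eta_2,\eta_2\rangle\geq c\,\|\eta\|_{H^s}^2.
\]
Absorbing $R(\eta)$ for $\|\eta\|_{H^s}$ small, and choosing $B$ small enough that the assumption $\mathcal{E}(\phi)-\mathcal{E}(Q)\leq B$ forces $\phi$ into the tubular neighborhood, one concludes $\mathcal{E}(\phi)-\mathcal{E}(Q)\geq\tfrac{c}{2}\|\eta\|_{H^s}^2$, whence the claim.

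The main obstacle, which I expect to be the most delicate technical point, is the claimed uniformity of $B$ and $C$ in $s\in(0,1)$. The nondegeneracy of \cite{FLS} and the spectral gaps of $L_\pm$ are a priori only qualitative in $s$. Extracting $s$-independent bounds appears to require either a continuity-in-$s$ argument for the ground state and its linearized spectrum (together with the classical Weinstein estimates at the endpoint $s=1$ to recover compactness), or a sharp fractional Gagliardo--Nirenberg inequality with explicit control of the constant as $s$ varies over $(0,1)$; the Vakhitov--Kolokolov quantity and the first negative eigenvalue of $L_+$ must likewise be shown to be bounded away from zero uniformly in $s$.
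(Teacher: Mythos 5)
Your proposal follows essentially the same route as the paper's proof: the Lyapunov functional $I=\tfrac12\mathcal{E}+\tfrac12\|\cdot\|_2^2$ on the mass sphere, modulation to the nearest point of the orbit with the resulting orthogonality conditions, coercivity of $L_{\pm}$ on the corresponding subspaces with the constraint identity $\langle Q,\Re(\phi-Q)\rangle_2=-\tfrac12\|\phi-Q\|_2^2$ neutralizing the negative direction of $L_+$, a Taylor expansion whose remainder is absorbed, and a smallness-of-$B$ argument (Cazenave--Lions) to force $\phi$ into the tube. The only notable deviations are that the paper obtains nonnegativity of $L_+$ on $\{Q\}^{\perp}$ from the characterization of $Q$ as a constrained $L^2$-sphere minimizer (Corollary~\ref{GS-char}) rather than invoking Vakhitov--Kolokolov, and that its Euler--Lagrange orthogonality conditions are $\langle U,H(Q)\partial_{x_j}Q\rangle_2=0$ and $\langle V,Q\rangle_{H^s}=0$, matching exactly the subspaces on which coercivity is proved (your plain $L^2$ orthogonalities would need the coercivity statements re-proved on slightly different complements of the kernels); your honest flag about uniformity in $s$ is well taken, as the paper's compactness arguments are likewise carried out at fixed $s$.
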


\noindent
This inequality is the fractional counterpart of an inequality which
follows as a corollary of the results by M. Weinstein on Lyapunov
stability for the nonlinear local Schr\"odinger equation, see
\cite{weinstein1,weinstein2}. A corresponding inequality 
for the nonlinear equations with a Hartree type nonlinearity was
obtained in \cite{PieMa} based upon the
nondegeneracy of ground states proved in \cite{Lenz}.
\vskip4pt
\noindent
Denoting $\|\cdot\|_{{\mathcal H}_\eps^s}^2=
\frac{1}{\eps^{N-2s}}\|(-\Delta)^{\frac{s}{2}}\cdot\|_2^2+\frac{1}{\eps^N}\|\cdot\|_2^2$,
we prove the following
\begin{theorem}
\label{primoteo}
Let $u^\eps(t)\in H^s(\R^N;\C)$ denote the unique solution to the
Cauchy problem~\eqref{eq:CP}.  Then there exists a positive constant
$C$, independent of $\eps\in (0,1]$ and $s\in (0,1)$, such that
\begin{equation}
\label{gradientvanish}
\|(-\Delta)^{\frac{s}{2}}u^\varepsilon(t)\|_2 \leq C \varepsilon^{\frac{N-2s}{2}},
\end{equation}
for every $t \geq 0$ and every $\varepsilon>0$.  Moreover, for any
$\eps>0$ sufficiently small and every $s\in (0,1)$ 
there exists a time $T^{\eps,s}>0$ and continuous functions
\[
\theta^{\eps,s}\colon [0,T^{\eps,s}]\to\R,\quad
z^{\eps,s}\colon\R^N\to\R,\quad \mathscr{E}\colon
[0,T^{\eps,s})\times(0,1]\times(0,1)\to\R,
\]
such that, uniformly on $s\in (0,1]$,
\[
\mathscr{E}(0,\eps,s)={\mathcal O}(\eps^2)
\]
and 
\[
\Big\|u^\eps(t)- e^{\frac{\mathrm{i}}{\eps}(\langle x,
  v(t)\rangle+\theta^{\eps,s}(t))}Q\Big(\frac{x-z^{\eps,s}(t)}{\eps}\Big)
\Big\|^2_{{\mathcal H}_\eps} \leq C\mathscr{E}(t,\eps,s)+{\mathcal
  O}(\eps^2)\quad\text{for all $t\in [0,T^{\eps,s})$.}
\]
Here $z^{\eps,s}(t)=x(t)+\eps\hat z^{\eps,s}(t)$ for some continuous  function $\hat
z^{\eps,s}\colon \R^N\to\R$, where $x(t)=x_s(t)$ is the solution to the
Cauchy problem~\eqref{sysdin-s-intro}.
\end{theorem}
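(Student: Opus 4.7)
The plan is to run a modulation/conservation argument parallel to the local case $s=1$, but with the modulational inequality provided by Theorem~\ref{thm:enconv} in place of Weinstein's. Three ingredients drive the argument: the conservation of mass and of the $\eps$-scaled energy
\[
\mathcal{E}^{\eps,V}(u)=\frac{\eps^{2s}}{2}\|(-\Delta)^{s/2}u\|_2^2+\int_{\R^N} V|u|^2-\frac{1}{p+1}\int_{\R^N}|u|^{2p+2}
\]
along~\eqref{eq:CP}; the nondegeneracy of $Q$ for~\eqref{ground-eq} from~\cite{FLS}; and the $L^2$-subcriticality $p<2s/N$, which makes the Gagliardo--Nirenberg inequality coercive on energy sublevels.

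First I would establish the \emph{a priori} bound~\eqref{gradientvanish}. A direct computation on the initial datum (rescaling $y=(x-x_0)/\eps$) gives $\mathcal{E}^{\eps,V}(u^\eps(0))=\eps^N\bigl[\mathcal{E}(Q)+\tfrac12 V(x_0)\|Q\|_2^2+\tfrac12 K_s(v_0)\|Q\|_2^2\bigr]+o(\eps^N)$, where $K_s(v)$ is the positive kinetic contribution produced by the Galilean phase $e^{\mathrm{i}\langle x,v\rangle/\eps}$ (which reduces to $|v|^2/2$ when $s=1$). Mass conservation together with Gagliardo--Nirenberg bounds the nonlinear term on all sublevels, and~\eqref{gradientvanish} then follows from the conservation of $\mathcal{E}^{\eps,V}$, uniformly in $t\geq 0$.

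Next I would rescale around the classical trajectory: letting $x(t)$ solve~\eqref{sysdin-s-intro} on its maximal existence interval and $v(t)=\dot x(t)$, set
\[
\phi^\eps(t,y)=\eps^{N/2}e^{-\mathrm{i}\langle\eps y+x(t),v(t)\rangle/\eps}u^\eps(t,\eps y+x(t)).
\]
Then $\|\phi^\eps(t)\|_{L^2}=\|Q\|_{L^2}$ by mass conservation, and $\phi^\eps(0,\cdot)=Q$. A Taylor expansion of $V$ around $x(t)$, combined with conservation of $\mathcal{E}^{\eps,V}$ and of the Hamiltonian $\tfrac12 K_s(v(t))+V(x(t))$ along~\eqref{sysdin-s-intro}, yields an excess
\[
\mathcal{E}(\phi^\eps(t))-\mathcal{E}(Q)\leq \mathscr{E}(t,\eps,s)+\mathcal{O}(\eps^2),
\]
with $\mathscr{E}(0,\eps,s)=\mathcal{O}(\eps^2)$. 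Applying Theorem~\ref{thm:enconv} to $\phi^\eps(t)$ produces $\vartheta^{\eps,s}(t)$ and $\hat z^{\eps,s}(t)$ realising the infimum; unwinding the rescaling with $z^{\eps,s}(t)=x(t)+\eps\hat z^{\eps,s}(t)$ and absorbing the phase $\langle x(t),v(t)\rangle/\eps$ into $\theta^{\eps,s}(t)$ produces exactly the announced estimate in $\mathcal{H}_\eps^s$, valid until the exit time $T^{\eps,s}$ at which the modulational inequality ceases to be applicable.

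The principal obstacle is that $(-\Delta)^s$ does not satisfy a Leibniz rule, so the Galilean phase $e^{\mathrm{i}\langle x,v\rangle/\eps}$ cannot be extracted as in the case $s=1$; the commutator $[(-\Delta)^s,e^{\mathrm{i}\langle\cdot,v\rangle/\eps}]$ must be analysed via the Fourier multiplier $|\xi+v/\eps|^{2s}$, and it is precisely this analysis that produces the non-quadratic kinetic energy $K_s(v)$ and, consequently, the fractional Newtonian equation $s|\dot x|^{2s-2}\ddot x=-\nabla V(x)$. Deriving the modulation ODEs for $(\hat z^{\eps,s},\theta^{\eps,s})$ requires imposing orthogonality of the remainder to the kernel $\mathrm{span}\{\partial_j Q,\mathrm{i}Q\}$ of the linearised operator and inverting it on the orthogonal complement, which is made possible by the nondegeneracy result of~\cite{FLS}.
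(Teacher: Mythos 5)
Your proposal follows essentially the same route as the paper: the a priori bound via energy/mass conservation and Gagliardo--Nirenberg with $Np<2s$, the Galilean rescaling $\Psi^\eps(t,y)=e^{-\mathrm{i}\langle \eps y+x(t),v(t)\rangle/\eps}u^\eps(t,\eps y+x(t))$, the expansion of $\mathcal{E}(\Psi^\eps(t))$ producing the fractional kinetic term $|v|^{2s}$ and the nonlocal cross terms ($\M$ and $\J_s$ in the paper, your commutator $[(-\Delta)^s,e^{\mathrm{i}\langle\cdot,v\rangle/\eps}]$), the conservation of $\tfrac12 m|v(t)|^{2s}+mV(x(t))$ along \eqref{sysdin-s-intro}, and the application of Theorem~\ref{thm:enconv} up to the exit time $T^{\eps,s}$, with the modulation parameters taken from the achieved infimum. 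The only superfluous element is your closing remark about deriving modulation ODEs by inverting the linearized operator: the paper never does this, since the continuity of $\theta^{\eps,s}$ and $z^{\eps,s}$ obtained from the minimization suffices for the statement.
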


\noindent
Hence, on a suitable time interval,  the solution remains close to the initial
profile with a term of order ${\mathcal O}(\eps^2)$. It is expected
that this qualitative behavior be preserved throughout the motion on
finite time intervals and also that $z^{\eps,s}(t)$ can be replaced by
$x(t)$ (solving problem~\eqref{sysdin-s-intro}) as in the local
case. On the other hand, the proof of this claim seems out of reach because
of the technical complications related to the nonlocal nature of
$(-\Delta)^s$ (see also Remark~\ref{mon-identities}).

\vskip2pt
\noindent
Furthermore, we have the following
\begin{theorem}
\label{thm:dyn}
Let $u^\eps_s(t)\in H^s(\R^N;\C)$ denote the unique solution to the
Cauchy problem~\eqref{eq:CP}.  Then it satisfies inequality
\eqref{gradientvanish}. Let $T>0$ and assume that
\[
0<s<1,\quad\,\, 0<p<\frac{2s}{N},
\]
that $V=V_1+V_2$ with $V_1\in {\mathcal C}^3(\mathbb{R}^N)$,
$V_2\in C^4(\mathbb{R}^N)$ and $V_2$ bounded from below. Then
there exist a positive constant $C$ and a continuous function
\[
\mathscr{A}\colon [0,T]\times (0,1]\times(0,1)\to\R,
\] 
such that 
\[
\lim_{s\to 1^-} \mathscr{A}(t,\eps,s)=0,\quad\text{for all $t\in
  [0,T]$ and $\eps\in (0,1]$}
\]
and
\[
\Big\|u^\eps_s(t)-Q\Big(\frac{x-x(t)}{\eps}\Big)
e^{\mathrm{i}\frac{\langle v(t), x\rangle}{\eps}}\Big\|_{{\mathcal H}_\eps^{s}}^2
\leq C\eps^{2s}+\|u^\eps_s(t)-u^\eps_1(t)\|_{{\mathcal
    H}_\eps^s}^2+{\mathscr A}(t,\eps,s), \quad\text{for all $t\in
  [0,T]$,}
\]
where $x(t)=x_s(t)$ is the solution 
to~\eqref{sysdin-s-intro}, provided $x_s$ converges to $x_1$ on $[0,T]$.
\end{theorem}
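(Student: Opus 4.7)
The approach is to use the classical ($s=1$) soliton dynamics of Bronski--Jerrard \cite{Bro-Jerr} and Keraani \cite{keraa} as a pivot, comparing $u^\eps_s$ to the target fractional profile through $u^\eps_1$ and the corresponding local profile. The gradient vanishing bound \eqref{gradientvanish} is inherited directly from Theorem~\ref{primoteo}. For the profile estimate, introduce
\[
R^\eps_\sigma(t,x):=Q_\sigma\!\Big(\frac{x-x_\sigma(t)}{\eps}\Big)
\exp\!\Big(\frac{\mathrm{i}\langle v_\sigma(t),x\rangle}{\eps}\Big),\qquad\sigma\in\{s,1\},
\]
with $v_\sigma=\dot x_\sigma$, and split by the triangle inequality
\[
u^\eps_s(t)-R^\eps_s(t)=\bigl[u^\eps_s(t)-u^\eps_1(t)\bigr]
+\bigl[u^\eps_1(t)-R^\eps_1(t)\bigr]+\bigl[R^\eps_1(t)-R^\eps_s(t)\bigr].
\]
Squaring the ${\mathcal H}_\eps^s$--norm and using $(a+b+c)^2\le 3(a^2+b^2+c^2)$, the first summand furnishes the comparison term explicitly appearing in the statement.

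For the middle summand, apply the classical local soliton dynamics of Bronski--Jerrard and Keraani to the $s=1$ Cauchy problem (whose well-posedness and modulational stability are guaranteed by the regularity assumption $V=V_1+V_2$ with $V_1\in{\mathcal C}^3$ and $V_2\in C^4$ bounded from below), which yields $\|u^\eps_1(t)-R^\eps_1(t)\|^2_{{\mathcal H}_\eps^{1}}\le C\eps^{2}$ uniformly in $t\in[0,T]$. To pass from ${\mathcal H}_\eps^1$ to ${\mathcal H}_\eps^s$, invoke the interpolation inequality $\|(-\Delta)^{s/2}f\|_2^2\le \|\nabla f\|_2^{2s}\|f\|_2^{2(1-s)}$ (H\"older in Fourier variables) together with Young's inequality, which, after the $\eps$--rescaling of the two norms, gives $\|f\|_{{\mathcal H}_\eps^s}^2\le 2\,\|f\|_{{\mathcal H}_\eps^1}^2$ uniformly in $s\in(0,1]$. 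Since $\eps^2\le\eps^{2s}$ for $\eps\in(0,1]$ and $s\in(0,1)$, the middle summand contributes the $C\eps^{2s}$ term.

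For the third summand, the rescaling $y=(x-x_1(t))/\eps$ collapses the ${\mathcal H}_\eps^s$--norm of $R^\eps_1(t)-R^\eps_s(t)$ into the $H^s(\R^N)$--norm of
\[
Q_1(y)e^{\mathrm{i}\langle v_1(t),y\rangle}e^{\mathrm{i}\alpha_1(t)}
-Q_s\!\Big(y-\tfrac{x_s(t)-x_1(t)}{\eps}\Big)
e^{\mathrm{i}\langle v_s(t),y\rangle}e^{\mathrm{i}\alpha_s(t)},
\]
with $\alpha_\sigma(t)=\langle v_\sigma(t),x_1(t)\rangle/\eps$. Define $\mathscr{A}(t,\eps,s)$ to be the square of this $H^s$--norm. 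The hypothesis $x_s\to x_1$ on $[0,T]$, combined with $v_s=\dot x_s\to v_1$ via the equation \eqref{sysdin-s-intro} and the continuous dependence $Q_s\to Q_1$ in $H^1(\R^N)$ granted by the uniqueness and nondegeneracy theory of \cite{Frank,FLS}, gives, for each fixed $(t,\eps)$,
\[
(x_s(t)-x_1(t))/\eps\to 0,\quad v_s(t)\to v_1(t),\quad
\alpha_s(t)\to\alpha_1(t),\quad Q_s\to Q_1\ \text{in }H^1,
\]
as $s\to 1^-$, whence $\mathscr{A}(t,\eps,s)\to 0$ in that limit. Continuity of $\mathscr{A}$ in $(t,\eps,s)$ follows from the joint continuous dependence of $(Q_s,x_s,v_s)$ on the parameters.

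The main obstacle is the continuous dependence of $Q_s$ on $s$ in $H^1$, which rests on the uniqueness and nondegeneracy theory of \cite{Frank,FLS}, together with the delicate management of the translation $(x_s-x_1)/\eps$: for fixed $\eps>0$ it vanishes only in the limit $s\to 1^-$, but it can be arbitrarily large when $s$ is fixed and $\eps$ is small. This is exactly why the estimate retains the explicit comparison term $\|u^\eps_s(t)-u^\eps_1(t)\|^2_{{\mathcal H}_\eps^s}$ on the right-hand side and why $\mathscr{A}$ is only required to vanish pointwise in $(t,\eps)$ as $s\to 1^-$.
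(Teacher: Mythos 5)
Your proposal is correct and follows essentially the same route as the paper: pivoting through the local solution $u^\eps_1$ and the $s=1$ profile, invoking Keraani's classical soliton dynamics for the middle term, and absorbing the profile discrepancy (translation, phase, and $Q_s$ versus $Q_1$) into a remainder $\mathscr{A}$ that vanishes as $s\to 1^-$; the paper merely splits your third summand into two pieces ($\mathbb{A}_3$ and $\mathbb{A}_4$) and handles the $\mathcal H_\eps^1$-to-$\mathcal H_\eps^s$ passage with an $\eps^{-2(1-s)}$ prefactor rather than your (cleaner) interpolation-plus-Young argument. The only caveat is that the convergence $Q_s\to Q_1$ should be attributed to the Fall--Valdinoci results for $s$ near $1$ rather than deduced from uniqueness and nondegeneracy alone.
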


\noindent
Hence, on finite time intervals and precisely on the trajectory $x(t)$, the closeness estimate holds at the
weaker rate $\eps^{2s}$ and in terms of the distance
between the semigroups $u^\eps_s$ and $u^\eps_1$.

\begin{remark}
  A major difficulty in our analysis is the lack of a point-wise
  calculus for fractional derivatives. In particular, the fractional
  laplacian does not obey a point-wise chain rule, nor a point-wise
  Leibniz rule for products. Only approximate versions of the fractional
  chain rule hold: see for instance \cite[Lemma A10, Lemma A.11, Lemma
  A.12]{Killip} and the references therein.  
  This makes the analysis hard and we can prove the closedness of
  $u^\eps_s$ to the orbit $Q((x-x(t)/\eps)$ only when $s$ approaches the limit value $s=1$. 
%
We conjecture that the norm $\|u^\eps_s(t)-u^\eps_1(t)\|_{{\mathcal H}_\eps^s}$ vanishes in the limit $s \to 1$, but the proof seems out of reach so far, as a regularity theory for the solutions to the
fractional laplacian equation is still missing.
\end{remark}

\begin{remark}
If $x(t)$ solves \eqref{sysdin-s-intro}, then it is readily seen that
the energy $t\mapsto \frac{1}{2}|\dot x(t)|^{2s} + V(x(t))$
is a constant of motion. The Cauchy problems~\eqref{sysdin-s-intro} and \eqref{sysdin-s=1}
  are different from a dynamical viewpoint. For instance,   \eqref{sysdin-s-intro} could
  fail to have uniqueness of solutions in the case $s\in (1/2,1]$ since $|\xi|^{2-2s}\nabla V(x)$, where $\xi=\dot x$, could fail
  to be locally Lipschitz continuous.  Also, it could admit heteroclinic connections, 
  while \eqref{sysdin-s=1} does not, as easy examples in the case $N=1$ show.  
To compare the behaviour of systems~\eqref{sysdin-s-intro} and \eqref{sysdin-s=1}
in the physically relevant situation of harmonic potentials,
let $N=2$ and $V(x_1,x_2):=\frac{1}{2}x_1^2+2x_2^2$.
Then \eqref{sysdin-s-intro}, for $s\in (0,1]$ is
\begin{equation}
\label{sistema-harm}
\begin{cases}
\dot x_1=\xi_1, & \\
\dot x_2=\xi_2, & \\
\dot \xi_1=-\frac{1}{s}(\xi_1^2+\xi_2^2)^{1-s}x_1, & \\
\dot \xi_2=-\frac{4}{s}(\xi_1^2+\xi_2^2)^{1-s}x_2, & 
\end{cases}
\end{equation}
with initial datum $x_1(0)=1$, $x_2(0)=a$, $\xi_1(0)=1$ and $\xi_2(0)=b$ for some $a,b>0$. See Figures
\ref{casos1}-\ref{casos025} for the solutions to \eqref{sistema-harm} for the cases $s=1,1/2,1/4$
respectively and data $a=1,\,b=1/2$ (left) and $a=1/2,\,b=1$ (right). Clearly, the complexity of the solutions increases
as $s$ gets small. For any $s<1$, the system admits the stationary solutions of the form $(\alpha,\beta,0,0)$
for $\alpha,\beta\in\R$, while for $s=1$ it only admits the trivial stationary solution $(0,0,0,0).$
\begin{figure}[h!!!]
\begin{center}
   \begin{minipage}{0.49\textwidth}
      \includegraphics[scale=.64]{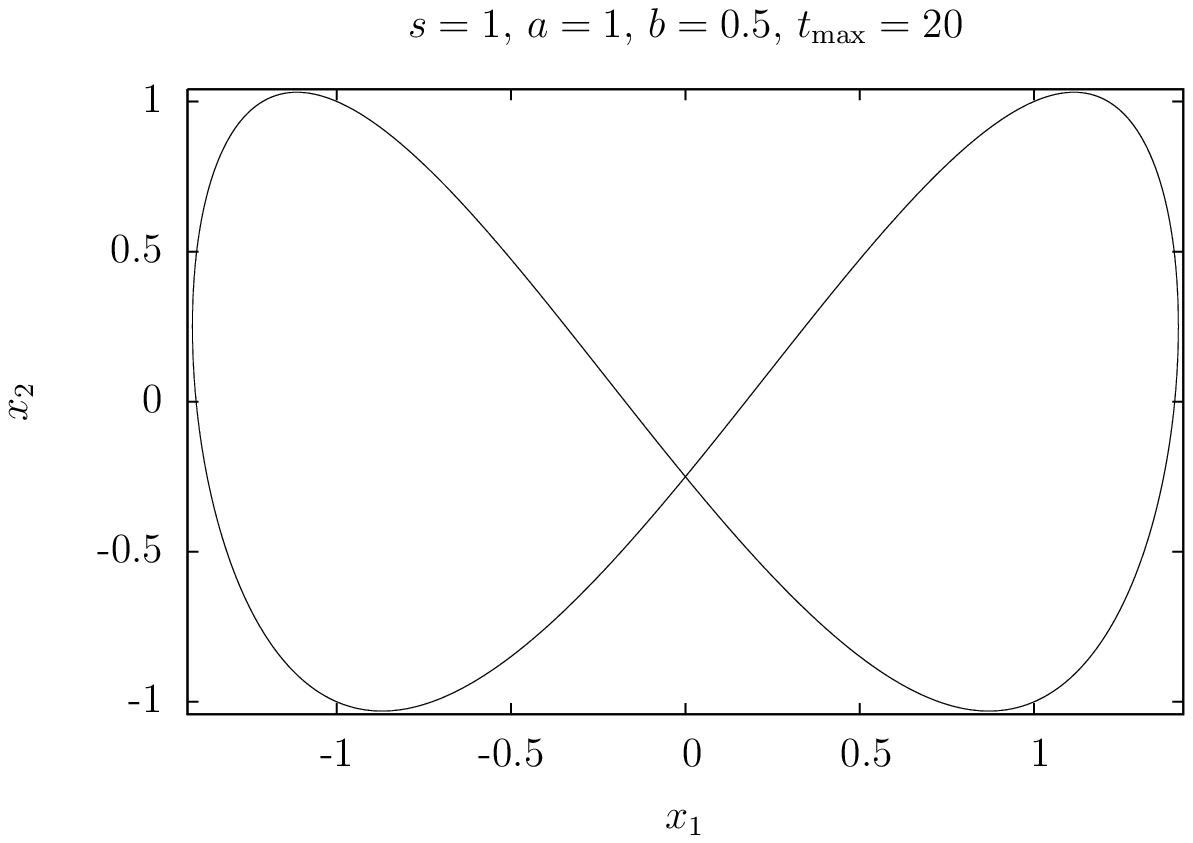}
   \end{minipage}
   \hspace{\fill}
   \begin{minipage}{0.49\textwidth}
      \includegraphics[scale=.64]{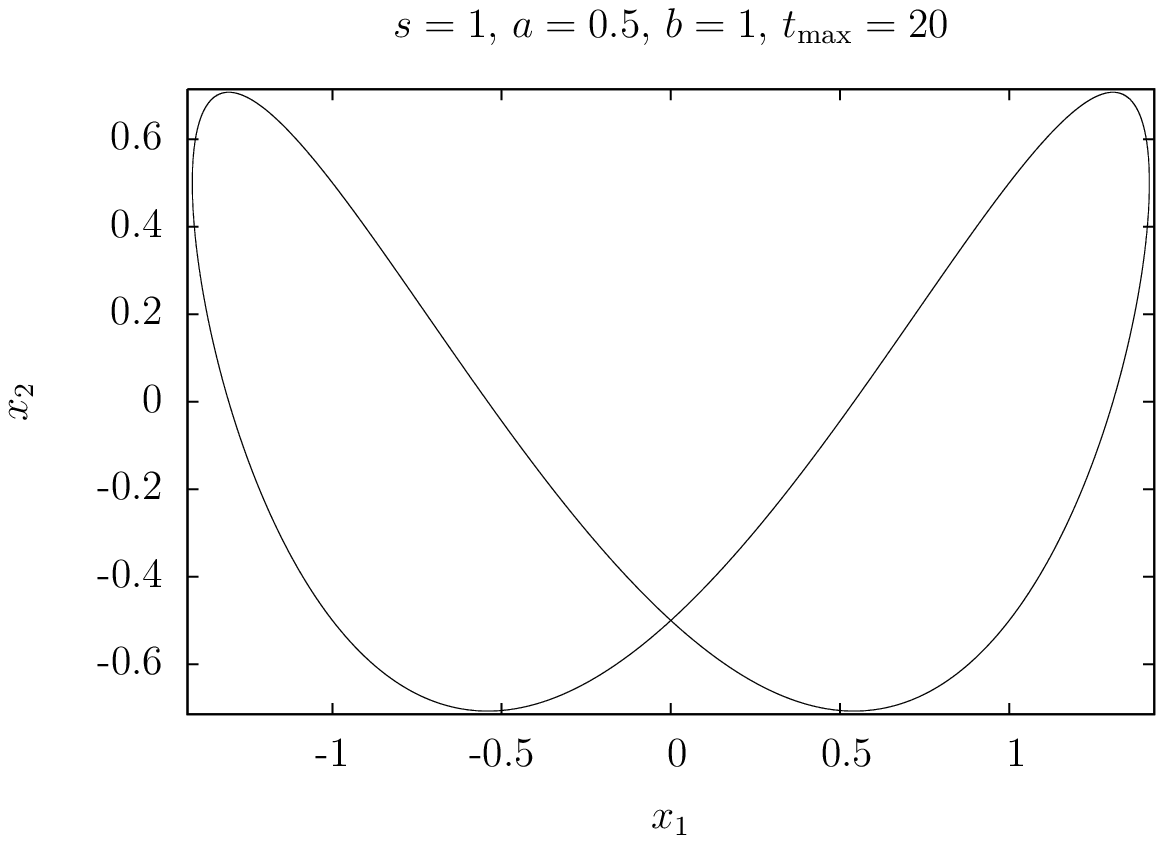}
   \end{minipage}
\end{center}
\caption{Solutions to~\eqref{sistema-harm} for $s=1$ with $a=1,\, b=0.5$ and
$a=0.5,\, b=1$.}\label{casos1}
\end{figure}

\begin{figure}[h!!!]
\begin{center}
   \begin{minipage}{0.49\textwidth}
      \includegraphics[scale=.64]{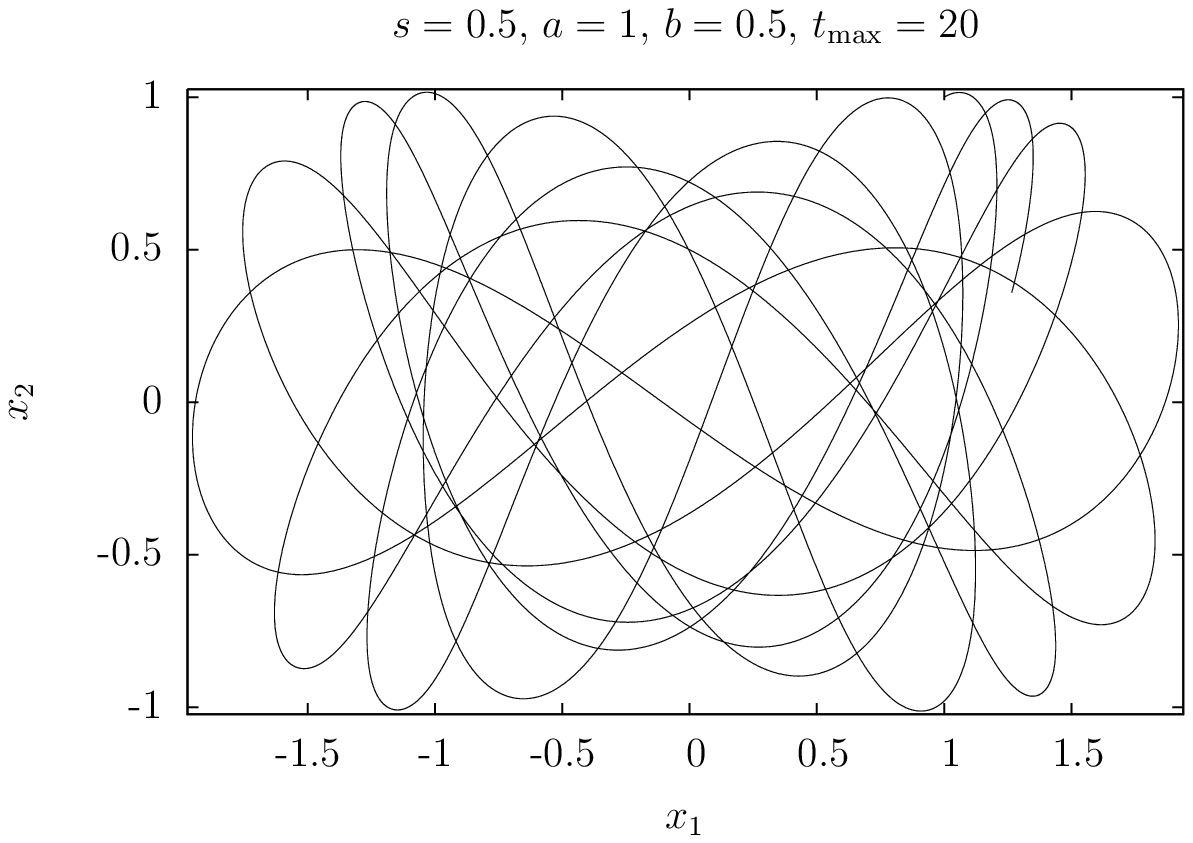}
   \end{minipage}
   \hspace{\fill}
   \begin{minipage}{0.49\textwidth}
      \includegraphics[scale=.64]{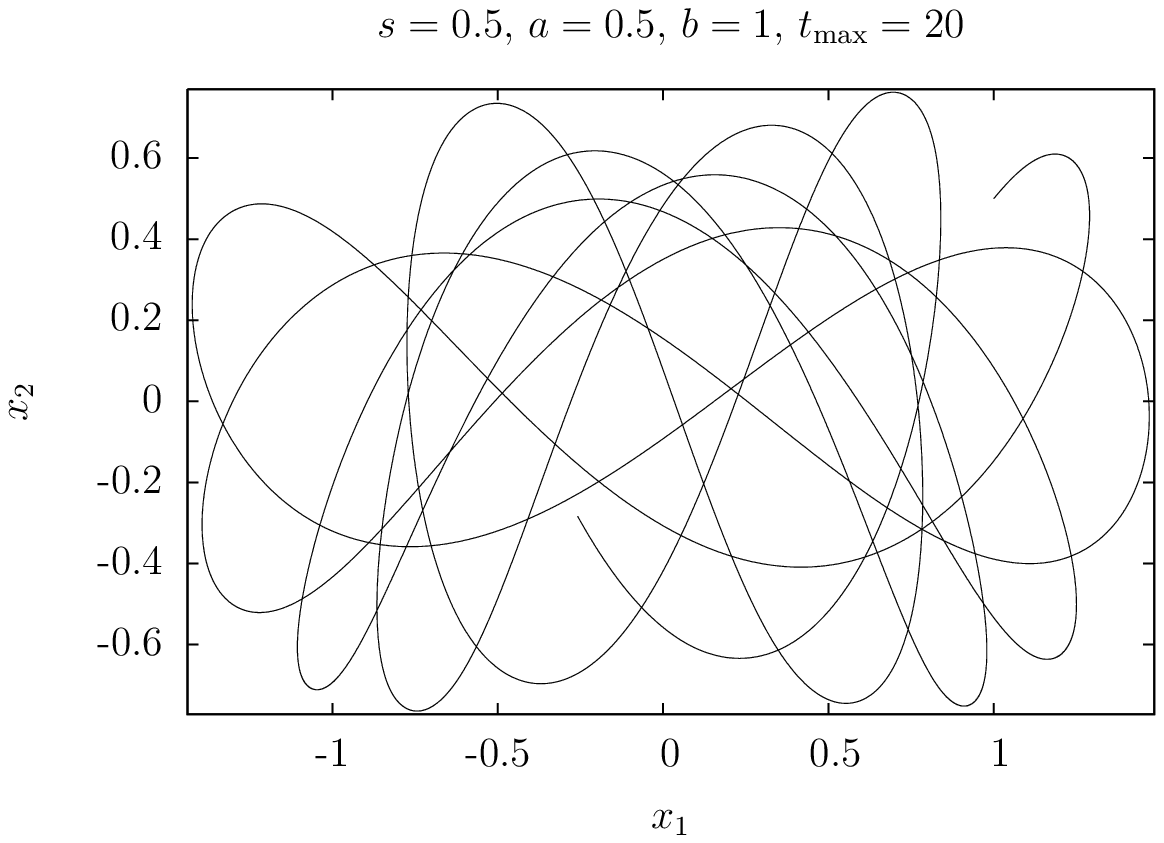}
   \end{minipage}
\end{center}
\caption{Solutions to~\eqref{sistema-harm} for $s=0.5$ with $a=1,\, b=0.5$ and
$a=0.5,\, b=1$.}\label{casos05}
\end{figure}

\begin{figure}[h!!!]
\begin{center}
   \begin{minipage}{0.49\textwidth}
      \includegraphics[scale=.64]{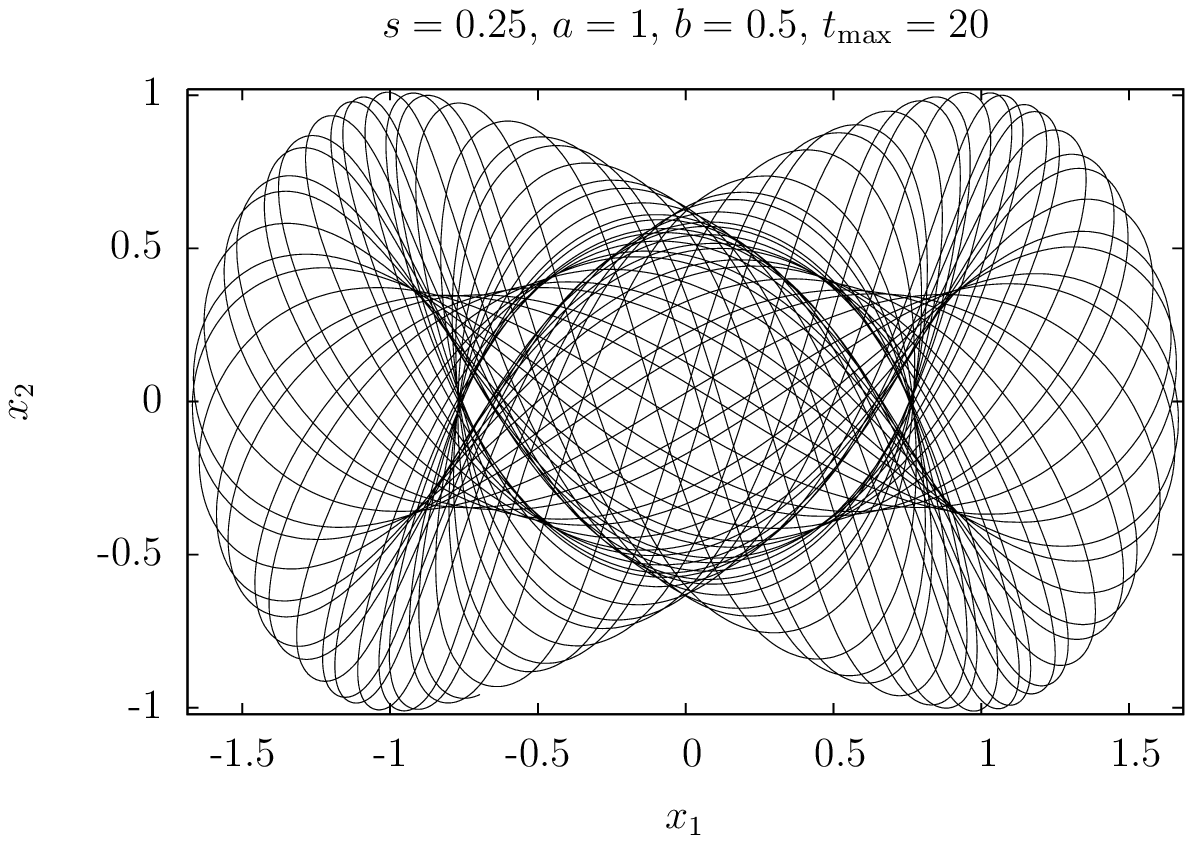}
   \end{minipage}
   \hspace{\fill}
   \begin{minipage}{0.49\textwidth}
      \includegraphics[scale=.64]{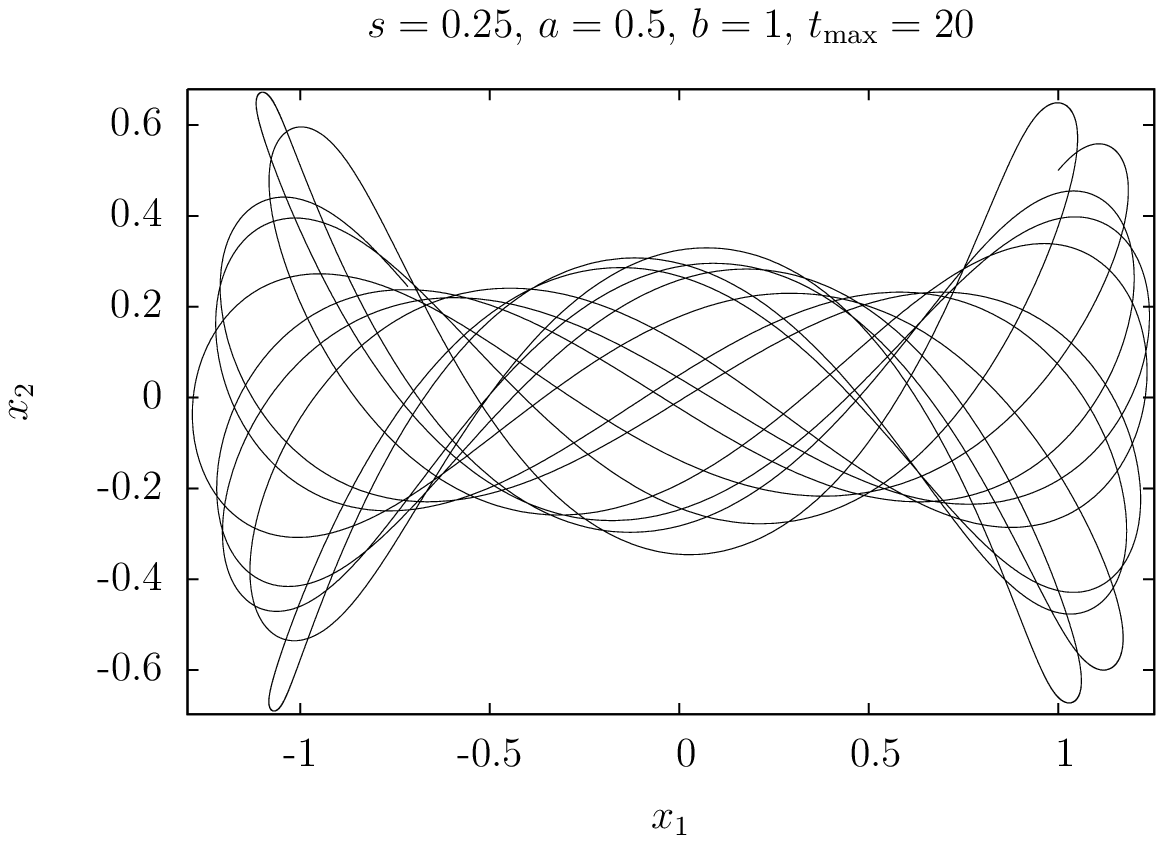}
   \end{minipage}
\end{center}
\caption{Solutions to~\eqref{sistema-harm} for $s=0.25$ with $a=1,\, b=0.5$ and
$a=0.5,\, b=1$.}\label{casos025}
\end{figure}
\end{remark}

\begin{remark}
A numerical analysis of the soliton dynamics behaviour according to Theorem~\ref{primoteo}
is currently under investigation and it will be the subject of a forthcoming manuscript.
\end{remark}

\subsection{Fractional laplacian and notations}
For the reader's convenience, we collect here some information about
the fractional laplacian $(-\Delta)^s$ in $\mathbb{R}^N$. We define 
it as the pseudo-differential operator acting on $u \in \mathscr{S}(\mathbb{R}^N,\mathbb{C})$ as
\[
(-\Delta)^s u := \mathcal{F}^{-1}\big( |\xi|^{2s} \mathcal{F}u(\xi)
\big),
\]
where $\mathcal{F}$ stands for the usual isometric Fourier transform
in $L^2(\mathbb{R}^N,\mathbb{C})$
\[
\mathcal{F}(u)(\xi) = \frac{1}{(2 \pi)^{N/2}} \int e^{-i
  \langle x,\xi \rangle} u(x)\, dx.
\]
As shown in \cite[Section 3]{DiNezza},
equivalent definitions are 
\begin{align*}
(-\Delta)^su(x) &= C(N,s) \, P.V. \int \frac{u(x)-u(y)}{|x-y|^{N+2s}}dy =
C(N,s)\lim_{\varepsilon \to 0} \int_{\mathbb{R}^N \setminus
  B(0,\varepsilon)} \frac{u(x)-u(y)}{|x-y|^{N+2s}}dy \\
  &= -\frac{1}{2} C(N,s) \int \frac{u(x+y)+u(x-y)-2u(x)}{|y|^{N+2s}}dy,
\end{align*}
where 
\begin{equation*} 
C(N,s) = \Big(\int \frac{1-\cos \zeta_1}{|\zeta|^{N+2s}}d\zeta\Big)^{-1}.
\end{equation*}
\begin{remark}
In some papers, the fractional laplacian is defined without any reference to the constant $C(N,s)$. This is legitimate when $s$ is kept fixed, but we will see that the behavior of $C(N,s)$ as $s \to 1$ will play a 
crucial r\^ole in Section \ref{preliminary}. 
\end{remark}

\noindent
The fractional Sobolev space $H^s(\mathbb{R}^N,\mathbb{C})$ may be
described as the set
\[
H^s(\mathbb{R}^N,\mathbb{C}) = \Big\{ u \in
  L^2(\mathbb{R}^N,\mathbb{C}) \mid \int \Big( 1+
  \tfrac{1}{2}|\xi|^{2s} \Big) |\mathcal{F}u(\xi)|^2 \, d\xi < +\infty
\Big\},
\]
endowed by the norm
\[
\|u\|_{H^s}^2 = \|u\|_2^2 + \frac{1}{2}\int |\xi|^{2s}  |\mathcal{F}u(\xi)|^2 \, d\xi=
\|u\|_2^2 + \frac{1}{2}\|(-\Delta)^{\frac{s}{2}}u\|_2^2.
\]
An identical (squared) norm is
\[
\|u\|_2^2 + \frac{C(N,s)}{4}\iint
\frac{|u(x)-u(y)|^2}{|x-y|^{N+2s}}dx dy,
\]
and, see \cite[Section 3]{DiNezza}, 
$$
\lim_{s\to 0^+}\frac{C(N,s) }{s(1-s)},\,\,
\lim_{s\to 1^-}\frac{C(N,s) }{s(1-s)}\in (0,+\infty).
$$
In the sequel, we will mainly work with the norm $\|u\|_2^2 + \frac{1}{2}
\|(-\Delta)^{\frac{s}{2}}u\|_2^2$.  
From the previous definitions, it follows that
\(
\big\| \sqrt{-\Delta}u \big\|_2 = \left\| \nabla u \right\|_2
\)
for any $u \in \mathscr{S}(\mathbb{R}^N)$.

\begin{remark} 
By equations (2.8) and (2.9) in
  \cite{DiNezza} and some elementary interpolation, we also deduce
  that the embeddings of $H^s(\mathbb{R}^N,\mathbb{C})$ have constants
  that can be considered as independent of $s \in [\delta,1]$, $\delta>0$. This fact
  will be used several times in the sequel.  Again from 
  \cite{DiNezza}, we have that $(-\Delta)^s u$ converges pointwise to
  $-\Delta u$ as $s\to 1^-$, for all $u\in
  C^\infty_c(\R^N)$. Furthermore, for $u \in H^1(\mathbb{R}^N,\mathbb{C})$,
\[
\lim_{s\to 1^-}\|(-\Delta)^{\frac{s}{2}}u\|_2=\|\nabla u\|_2.
\]
As a consequence, the fractional norms $\|u\|$ remain bounded as $s$
approaches $1$ and the Sobolev-Gagliardo-Nirenberg interpolation
inequality
\begin{equation} \label{eq:1.7}
\|u\|_{2p+2} \leq C \|u\|_2^{\alpha}
\|(-\Delta)^{\frac{s}{2}}u\|_2^{1-\alpha}, \quad \text{for all $u\in
  H^s(\mathbb{R}^N,\mathbb{C})$,}
\end{equation}
for a suitable $\alpha\in (0,1)$, holds with a contant $C$ which is
independent of the choice of $s\in (\delta,1].$
\end{remark}

\begin{center}
\textbf{Notation}
\end{center}
\begin{enumerate}
\item The usual euclidean scalar product of $\mathbb{R}^N$ will be denoted by \(\langle x,y \rangle = \sum_{j=1}^N x_j y_j\).
\item The space $\mathbb{C}$ will be endowed with the \emph{real}
  inner product defined by
  \begin{equation} \label{eq:innerC} 
  z \cdot w = \Re(z \overline{w})
    = \frac{z \overline{w}+\overline{z}w}{2}
\end{equation}
for every $z$, $w \in \mathbb{C}$.
\item We will denote by $\|\cdot \|_p$ the $L^p$-norm in
  $\mathbb{R}^N$, and by $\|\cdot \|_{H^s}$ the $H^s$-norm in
  $\mathbb{R}^N$. These norms come from the inner products
\begin{equation*} 
  \langle u,v \rangle_2 = \Re \int u \overline{v} \quad
  \text{and}\quad\langle u,v \rangle_{H^s} = \frac{1}{2}\Re \int
  (-\Delta)^{\frac{s}{2}}u \ \overline{(-\Delta)^{\frac{s}{2}} v}+\Re
  \int u \overline{v},
\end{equation*}
respectively.
\item Integrals over the whole space will be denoted by $\int$.
\item Generic constants will be denoted by the letter $C$. We shall always assume
  that $C$ may vary from line to line but it is {\em independent of} $s$ \emph{and} $\eps$ unless explicitly stated.
\item If $L$ is a linear operator acting on some space, the notation $\langle L,u \rangle$ denotes the value of $L$ evaluated at $u$. There is no confusion with the euclidean scalar product.
\end{enumerate}

\section{Properties of ground states}

\noindent A standing wave solution of the problem
\begin{equation*} 
\left\{
\begin{array}{ll}
  \mathrm{i} \frac{\partial \phi}{\partial t} -  \frac{1}{2}(-\Delta)^s \phi + |\phi|^{2p}\phi=0, \\[5pt]
  \phi(0,x)=\phi_0(x),
\end{array}
\right.
\end{equation*}
is a function of the form
\[
\phi(t,x)=e^{\mathrm{i}t}u(x),
\]
where $u \colon \mathbb{R}^N \to \mathbb{C}$ solves the elliptic equation
\begin{equation} \label{eq:2} 
\frac{1}{2}(-\Delta)^s u +u = |u|^{2p}u.
\end{equation}
\begin{definition}
  A solution $z \colon \mathbb{R}^N \to \mathbb{C}$ of (\ref{eq:2}) is
  called \emph{non-degenerate} if the set of solutions $u$ of the
  linearized equation
\[
\frac{1}{2}(-\Delta)^s u +u = (2p+1)|z|^{2p}u
\]
is the $N$-dimensional subspace spanned by the partial derivatives of
$z$.
\end{definition}
%
%
We recall the following facts from \cite{FLS,vald-1s}.
\begin{theorem} \label{th:2.3} Consider equation (\ref{eq:2}) for
  $0<s<1$ and $0<p<p_{\mathrm{max}}(s)$, where
\[
p_{\mathrm{max}}(s) = 
\begin{cases}
  \frac{2s}{N-2s} &\text{if $0<s < N/2$}\\
  +\infty &\text{otherwise}.
\end{cases}
\]
 Then the following facts hold.
\begin{itemize}
\item[(i)] \textbf{Existence.} There exists a solution $Q \in
  H^s(\mathbb{R}^N)$ of equation \eqref{eq:2} such that
  $Q$ is radially symmetric, positive and decreasing in
  $|x|$. Moreover, $Q$ is a ground state solution, namely a
  minimizer of the functional
\begin{equation*}
  J^{s,p}(u) = \frac{\left( \int |(-\Delta)^{s/2} u|^2 \right)^{\frac{p}{2s}} 
  \left( \int |u|^2 \right)^{\frac{p}{2s}(2s-1)+1}}{\int |u|^{2p+2}}.
\end{equation*}
\item[(ii)] \textbf{Symmetry and monotonicity.}  If $Q \in
  H^s(\mathbb{R}^N)$ solves (\ref{eq:2}) with $Q \geq 0$ and $Q$
  not identically equal to zero, then there exists $x_0 \in
  \mathbb{R}^N$ such that $Q(\cdot -x_0)$ is radially simmetric, positive and
  decreasing in $|x-x_0|$.
\item[(iii)] \textbf{Regularity and decay.} If $Q \in
  H^s(\mathbb{R}^N)$ solves (\ref{eq:2}), then $Q \in
  H^{2s+1}(\mathbb{R}^N)$. Moreover we have the decay estimate
\begin{equation*}
  |Q(x)| + |x \cdot \nabla Q(x)| \leq \frac{C}{1+|x|^{N+2s}}
\end{equation*}
for all $x \in \mathbb{R}$ and some constant $C>0$.
\item[(iv)] \textbf{Nondegeneracy.} Suppose $Q \in
  H^s(\mathbb{R}^N)$ is a solution of (\ref{eq:2}), and consider the
  linearized operator at $Q$
 \begin{equation*}
   L_{+} = \frac{1}{2}(-\Delta)^s + 1 - (2p+1)Q^{2p}
 \end{equation*}
 acting on $L^2(\mathbb{R}^N)$. If $Q=Q(|x|)>0$ is a ground state
 solution of (\ref{eq:2}), then
\begin{equation*}
  \ker L_{+} = \operatorname{span} \left\{ \frac{\partial Q}{\partial x_1},\ldots, \frac{\partial Q}{\partial x_N} \right\}.
\end{equation*}
\item[(v)] \textbf{Uniqueness.} The ground state for (\ref{eq:2}) is
  unique (up to translations).
\item[(vi)] \textbf{Stability.}  For every $s_0\in (0,1]$ and $Q=Q_s$, we have
\[
\sup_{s\in (s_0,1]}\|Q_s\|_{\infty}<\infty,\quad \sup_{s\in
  (s_0,1]}\|Q_s\|_{2}<\infty,\quad \sup_{s\in
  (s_0,1]}\|(-\Delta)^{s/2} Q_s\|_{H^s}<\infty.
\]
\end{itemize}
\end{theorem}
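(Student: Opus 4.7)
The statement is essentially a compilation of existing results: items (i)--(v) are established in \cite{FLS} and \cite{vald-1s}, and only (vi), which concerns uniform bounds as $s$ varies, requires a short additional argument. The plan is therefore to attribute each earlier item to its source and to carry out (vi) by hand.

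For items (i)--(v), I would indicate the underlying technique for each. Existence (i) follows by minimizing the scale-invariant Weinstein-type quotient $J^{s,p}$ on $H^s(\mathbb{R}^N)\setminus\{0\}$: concentration-compactness produces a nontrivial minimizer, and fractional Schwarz rearrangement yields a radial positive representative which, after rescaling, solves \eqref{eq:2}. Symmetry and monotonicity (ii) follow from the fractional P\'olya--Szeg\H o inequality combined with a moving-planes argument implemented on the Caffarelli--Silvestre extension. Regularity and decay (iii) come from a pseudo-differential bootstrap on the symbol $|\xi|^{2s}$ together with the integral representation by the Green function of $(-\Delta)^s+\mathrm{Id}$. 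Nondegeneracy (iv) is the core spectral result of \cite{FLS}, obtained through a careful analysis of the radial spectrum via the Caffarelli--Silvestre extension. Uniqueness (v) then follows from (iv) by the standard implicit-function/continuation argument along one-parameter families of ground states.

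For item (vi), the plan is to combine the Nehari and Pohozaev identities with the variational characterization. Testing \eqref{eq:2} against $Q_s$ gives the Nehari identity
\[
\tfrac{1}{2}\|(-\Delta)^{s/2}Q_s\|_2^2 + \|Q_s\|_2^2 = \|Q_s\|_{2p+2}^{2p+2},
\]
while the Pohozaev identity for the fractional Laplacian gives
\[
\tfrac{N-2s}{2}\|(-\Delta)^{s/2}Q_s\|_2^2 + N\|Q_s\|_2^2 = \tfrac{N}{p+1}\|Q_s\|_{2p+2}^{2p+2}.
\]
Solving this $2\times 2$ linear system expresses $\|(-\Delta)^{s/2}Q_s\|_2^2$ and $\|Q_s\|_{2p+2}^{2p+2}$ as explicit multiples of $\|Q_s\|_2^2$, the coefficients being continuous in $s$ and bounded on $[s_0,1]$ precisely because the subcritical condition reads $2s-Np>0$. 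To fix the absolute scale, I would plug an $s$-independent smooth radial test function $\varphi$ into $J^{s,p}$: the remark preceding the theorem guarantees that $\|(-\Delta)^{s/2}\varphi\|_2$ is bounded as $s\to 1^-$, whence $\sup_s J^{s,p}(\varphi)<\infty$ and thus $J^{s,p}(Q_s)$ is uniformly bounded above; eliminating the other norms via the linear system shows that $J^{s,p}(Q_s)$ scales like a positive power of $\|Q_s\|_2$, which yields the desired uniform bound on $\|Q_s\|_2$ and hence on $\|(-\Delta)^{s/2}Q_s\|_2$. The uniform $L^\infty$ estimate is then obtained by a Moser-type iteration on \eqref{eq:2}, testing with $Q_s^{2k+1}$ and using that the fractional Sobolev embedding $H^s\hookrightarrow L^{2^\ast_s}$ has constants bounded on $[s_0,1]$ by the same remark.

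The main obstacle is the careful bookkeeping of the constant $C(N,s)$ and of the fractional Sobolev embedding constants as $s$ varies; the asymptotic behavior of $C(N,s)/(s(1-s))$ recalled in the preliminaries is exactly what guarantees their uniform boundedness on any compact subinterval of $(0,1]$, and this is used implicitly throughout the argument. An additional delicate point is ensuring that the Moser iteration does not produce constants that blow up with $s\to 1^-$: this is handled by noting that the geometric series of Sobolev constants converges with a rate controlled by the uniform embedding, so its limit stays finite along the iteration.
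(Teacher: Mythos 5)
The paper itself supplies no proof of this theorem: it is introduced with ``We recall the following facts from \cite{FLS,vald-1s}'' and every item, including (vi), is imported from those references. Your treatment of (i)--(v) therefore coincides in substance with what the paper does (a citation, which you usefully supplement with sketches of the underlying techniques, all of which are accurate descriptions of the Frank--Lenzmann--Silvestre and Fall--Valdinoci arguments). Where you genuinely diverge is item (vi), for which you build a self-contained argument instead of citing \cite{vald-1s}. That argument is essentially sound, and in fact it mirrors computations the paper carries out later for other purposes: solving the Nehari--Pohozaev system gives $\|(-\Delta)^{s/2}Q_s\|_2^2=c_1(s)\|Q_s\|_2^2$ and $\|Q_s\|_{2p+2}^{2p+2}=c_2(s)\|Q_s\|_2^2$ with $c_1,c_2$ bounded above and below on $[s_0,1]$ (this is exactly the algebra in the proof of Lemma \ref{mappa}), and then the scale-invariance of $J^{s,p}$ yields $J^{s,p}(Q_s)=c_1(s)^{p/(2s)}c_2(s)^{-1}\|Q_s\|_2^{2p}$, so the uniform upper bound $J^{s,p}(Q_s)\le J^{s,p}(\varphi)\le C$ for a fixed test function does pin down $\|Q_s\|_2$ and hence $\|(-\Delta)^{s/2}Q_s\|_2$. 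Two small points you should make explicit. First, the third bound in (vi) is on $\|(-\Delta)^{s/2}Q_s\|_{H^s}$, which contains $\|(-\Delta)^{s}Q_s\|_2$; your argument only controls $\|(-\Delta)^{s/2}Q_s\|_2$, and you need to close the loop by reading the equation as $(-\Delta)^s Q_s=2(Q_s^{2p+1}-Q_s)$ and invoking the uniform $L^\infty$ and $L^2$ bounds to get $\|(-\Delta)^s Q_s\|_2\le 2\|Q_s\|_\infty^{2p}\|Q_s\|_2+2\|Q_s\|_2\le C$. Second, for the coefficients $c_1(s),c_2(s)$ to stay bounded and bounded away from zero on $(s_0,1]$ you need $2s(p+1)-Np\ge 2s_0(p+1)-Np>0$, i.e.\ the fixed exponent $p$ must satisfy $p<2s_0/N$ uniformly in the range of $s$ considered; this is implicit in your remark about the subcritical condition but deserves a sentence, since it is the only place the restriction to $s>s_0$ enters.
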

\begin{remark}
In the sequel, we will often write $Q$ instead of $Q_s$, when $s$ is
kept fixed.
\end{remark}

\noindent 
Let us introduce some notation. 
\begin{align*}
  I(u) &= \frac{1}{2}\mathcal{E}(u) + \frac{1}{2} \|u\|_2^2 \\
  \mathcal{M}_\gamma &= \left\{ u \in H^s(\mathbb{R}^N) \mid \|u\|_2^2=\gamma \right\} \\
  K_{\mathcal{E}} &= \left\{ c <0 \mid \text{$\mathcal{E}(u)=2c$, $\nabla_{\mathcal{M}_\gamma} \mathcal{E}(u)=0$ for some $u \in \mathcal{M}_\gamma$} \right\} \\
  \widetilde{K}_{\mathcal{E}} &= \left\{ u \in \mathcal{M}_\gamma \mid \nabla_{\mathcal{M}_\gamma} \mathcal{E}(u)=0 , \ \mathcal{E}(u)<0 \right\} \\
  K_I &= \left\{ m \in \mathbb{R} \mid \text{$I(u)=m$ and $I'(u)=0$ for some $u \in \mathcal{N}$} \right\} \\
  \widetilde{K}_I &= \left\{ u \in \mathcal{N} \mid I'(u)=0 \right\},
\end{align*}
where
\begin{equation*}
  \mathcal{N} = \left\{ u \in H^s (\mathbb{R}^N) \mid \langle I'(u),u \rangle =0 \right\}
\end{equation*}
is the Nehari manifold associated to (\ref{eq:2}).
For future reference, we record that, for any $\xi \in
H^s(\mathbb{R}^N,\mathbb{C})$ and any $\zeta \in
H^s(\mathbb{R}^N,\mathbb{C})$ there results
\begin{equation} \label{eq:3.1} 
\langle I''(\xi)\zeta,\zeta
  \rangle_{H^s} = \|\zeta\|_{H^s}^2 -2p \int \left( |\xi|^{2p-2} \left( \xi
      \cdot \zeta \right)\xi \right) \cdot \zeta - \int |\xi|^{2p}
  \zeta \cdot \zeta,
\end{equation}
where we have used the notation introduced in \eqref{eq:innerC}.
\begin{definition}
In the sequel, given a function $u$ and $\lambda,\mu\in\mathbb{R}$, 
we will write $u^{\mu,\lambda}(x)=\mu u (\lambda x)$.
\end{definition}
%
\begin{lemma} \label{lem:2.1} Given $u \in H^s(\mathbb{R}^N)$, the following scaling relations hold true:
\begin{align*}
  \|u^{\mu,\lambda}\|_2^2 &=\mu^2 \lambda^{-N} \|u\|_2^2, \\ 
  \|u^{\mu,\lambda}\|_{2p+2}^{2p+2} &= \mu^{2p+2} \lambda^{-N} \|u\|_{2p+2}^{2p+2}, \\
  \|(-\Delta)^{\frac{s}{2}} u^{\mu,\lambda}\|_2^2 &= \mu^2 \lambda^{2s-N} \|(-\Delta)^{\frac{s}{2}}u\|_2^2.
\end{align*}
\end{lemma}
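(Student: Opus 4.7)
The three identities are all direct consequences of a change of variables, so my plan is simply to verify each one in turn, relying on Plancherel's theorem for the fractional seminorm.

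For the first two identities I would compute directly in physical space. Writing
\[
\|u^{\mu,\lambda}\|_q^q = \int |\mu u(\lambda x)|^q\, dx = \mu^q \int |u(\lambda x)|^q\, dx,
\]
and performing the substitution $y = \lambda x$ (so $dy = \lambda^N dx$) yields $\mu^q \lambda^{-N} \|u\|_q^q$. Applying this with $q = 2$ gives the first line and with $q = 2p+2$ gives the second.

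For the third identity, pointwise calculus for $(-\Delta)^{s/2}$ is awkward, so I would switch to the Fourier side using the definition $\|(-\Delta)^{s/2} v\|_2^2 = \int |\xi|^{2s} |\mathcal{F} v(\xi)|^2\, d\xi$. The standard scaling of the Fourier transform gives
\[
\mathcal{F}(u^{\mu,\lambda})(\xi) = \mu \lambda^{-N} \mathcal{F}(u)(\xi/\lambda),
\]
so
\[
\|(-\Delta)^{s/2} u^{\mu,\lambda}\|_2^2 = \mu^2 \lambda^{-2N} \int |\xi|^{2s} \bigl|\mathcal{F}(u)(\xi/\lambda)\bigr|^2\, d\xi.
\]
Setting $\eta = \xi/\lambda$, one has $|\xi|^{2s} = \lambda^{2s} |\eta|^{2s}$ and $d\xi = \lambda^N\, d\eta$, producing the factor $\mu^2 \lambda^{-2N} \cdot \lambda^{2s} \cdot \lambda^N = \mu^2 \lambda^{2s-N}$ in front of $\|(-\Delta)^{s/2} u\|_2^2$, as claimed.

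There is really no obstacle here; the only point worth flagging is that, because $(-\Delta)^{s/2}$ does not enjoy a clean pointwise scaling rule for general $s \in (0,1)$, the Fourier-side computation is the clean way to justify the third identity, and it is what links the exponent $2s-N$ to the fractional order $s$. The same computation of course reduces to the classical Dirichlet seminorm scaling when $s=1$.
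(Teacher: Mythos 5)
Your proof is correct and is exactly the "direct computation" the paper invokes without writing out: change of variables in physical space for the $L^2$ and $L^{2p+2}$ norms, and the scaling of the Fourier transform for the fractional seminorm. All exponents check out, so nothing further is needed.
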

\begin{proof}
  The three identities follow from a direct computation.
\end{proof}
\begin{lemma}
  Assume that
\[
0<s<1,\quad\,\, 0<p<\frac{2s}{N}.
\]
Then there is a bijective correspondence between the sets
$\widetilde{K}_\mathcal{E}$ and $\widetilde{K}_I$.
\end{lemma}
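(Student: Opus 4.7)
The plan is to exhibit the bijection through the two-parameter rescaling $u^{\mu,\lambda}(x) = \mu u(\lambda x)$ of Lemma~\ref{lem:2.1}, with the link $\mu = \lambda^{s/p}$ chosen so that the Euler-Lagrange structure of equation~\eqref{eq:2} is preserved under the scaling. Concretely, I will define $\Phi\colon\widetilde{K}_{\mathcal{E}}\to\widetilde{K}_I$ by rescaling a constrained critical point of $\mathcal{E}$ on $\mathcal{M}_\gamma$ so as to normalize its Lagrange multiplier to the value $1$, and $\Psi\colon\widetilde{K}_I\to\widetilde{K}_{\mathcal{E}}$ by rescaling a free critical point so that its $L^2$-norm equals $\gamma$; at the end I will check that these maps are mutual inverses.

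For the forward direction, given $u \in \widetilde{K}_{\mathcal{E}}$, the Lagrange multiplier rule provides $\omega \in \mathbb{R}$ with $\frac{1}{2}(-\Delta)^s u - \omega u = |u|^{2p}u$. Testing against $u$ yields
\[
\omega \|u\|_2^2 = \frac{1}{2}\|(-\Delta)^{s/2}u\|_2^2 - \|u\|_{2p+2}^{2p+2},
\]
and the assumption $\mathcal{E}(u) < 0$ translates into $\|u\|_{2p+2}^{2p+2} > \frac{p+1}{2}\|(-\Delta)^{s/2}u\|_2^2$, which forces $\omega < 0$. Choosing $\lambda = (-\omega)^{-1/(2s)}$ and $\mu = \lambda^{s/p}$ and using the scaling behaviour of $(-\Delta)^s$, a direct substitution shows that $v := u^{\mu,\lambda}$ satisfies $\frac{1}{2}(-\Delta)^s v + v = |v|^{2p}v$, i.e.\ $I'(v) = 0$; in particular $v \in \mathcal{N}$, so $v \in \widetilde{K}_I$.

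For the reverse direction, given $v \in \widetilde{K}_I$, I would seek $\lambda > 0$ so that $u := v^{\mu,\lambda}$ with $\mu = \lambda^{s/p}$ satisfies $\|u\|_2^2 = \gamma$. By Lemma~\ref{lem:2.1}, $\|u\|_2^2 = \lambda^{2s/p - N}\|v\|_2^2$, and the assumption $p < 2s/N$ gives $2s/p - N > 0$, so such a $\lambda$ is uniquely determined; the resulting $u$ solves $\frac{1}{2}(-\Delta)^s u + \lambda^{2s} u = |u|^{2p}u$, hence is a constrained critical point on $\mathcal{M}_\gamma$ with Lagrange multiplier $\omega = -\lambda^{2s} < 0$. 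The scaling identity $\mathcal{E}(u) = \lambda^{2s/p + 2s - N}\mathcal{E}(v)$ then reduces the required $\mathcal{E}(u) < 0$ to $\mathcal{E}(v) < 0$; to establish the latter I would combine the Nehari identity $\frac{1}{2}\|(-\Delta)^{s/2}v\|_2^2 + \|v\|_2^2 = \|v\|_{2p+2}^{2p+2}$ with the fractional Pohozaev identity from \cite{FLS}, which together express $\mathcal{E}(v)$ as a positive multiple of $(Np - 2s)\|v\|_2^2$, negative precisely because $p < 2s/N$. The final check that $\Phi \circ \Psi = \operatorname{id}$ and $\Psi \circ \Phi = \operatorname{id}$ is then a straightforward algebraic manipulation: both maps obey $\mu = \lambda^{s/p}$, and the scaling parameters $\lambda$ in the two directions are reciprocal, since $\omega(\Psi(v)) = -\lambda^{2s}$.

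The main obstacle I anticipate is the fractional Pohozaev identity needed for $\mathcal{E}(v) < 0$: unlike the local case, it cannot be obtained by a pointwise calculation and relies on the harmonic extension or commutator arguments from \cite{FLS}. Once this is granted, the rest of the argument reduces to algebra with the scaling relations of Lemma~\ref{lem:2.1}.
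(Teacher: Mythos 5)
Your argument is correct and is essentially the paper's own proof: both use the rescaling $u^{\mu,\lambda}$ of Lemma~\ref{lem:2.1} with the link $\mu=\lambda^{s/p}$ (equivalently $\lambda=\ell^{-1/(2s)}$, $\mu=\ell^{-1/(2p)}$ in terms of the Lagrange multiplier $\ell=-\omega>0$), normalizing the multiplier to $1$ in one direction and the $L^2$-norm to $\gamma$ in the other, the two scalings being reciprocal. You in fact supply a detail the paper glosses over, namely the verification that $\mathcal{E}<0$ is preserved in the reverse direction via the Nehari and Pohozaev identities (the same computation the paper carries out only later, in Lemma~\ref{mappa}).
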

\begin{proof}
  Let us pick $v \in \mathcal{M}_\gamma$ such that $\langle
  \mathcal{E}'(v),v \rangle = -\ell \gamma$ and $\mathcal{E}(v)=2c<0$.
  Then 
$-\ell \gamma -4c = \langle \mathcal{E}'(v)v\rangle -2 \mathcal{E}(v) =-\frac{2p}{p+1}\|u\|_{2p+2}^{2p+2}<0,$
and therefore $\ell >0$. We can define a map $T^{\mu,\lambda} \colon \mathcal{M}_\gamma \to \mathcal{N}$ by 
$T^{\mu,\lambda} (v) = v^{\mu,\lambda}$,
where $\mu$ and $\lambda$ are defined by the condition
\[
\lambda = \ell^{-\frac{1}{2s}}, \quad \mu = \ell^{-\frac{1}{2p}}.
\]
It is easy to check that $v^{\mu,\lambda} \in \widetilde{K}_I$. Viceversa, if $u \in \widetilde{K}_I$, then we choose $\ell>0$ such that 
\begin{equation}
\label{normaelle2}
\ell^{\frac{1}{p}-\frac{N}{2s}} = \frac{\gamma}{\|u\|_2^2}, \quad \lambda = \ell^{\frac{1}{2s}}, \quad \mu = \ell^{\frac{1}{2p}},
\end{equation}
so that $u^{\mu,\lambda} \in \mathcal{M}_\gamma$ and $\nabla_{\mathcal{M}_\gamma} \mathcal{E}(u^{\mu,\lambda})=0$.
Whence
$\big( T^{\mu,\lambda} \big)^{-1} = T^{1/\mu,1/\lambda}$
concluding the proof.
\end{proof}

\begin{lemma}
\label{mappa}
Assume that
\[
0<s<1,\quad 0<p<\frac{2s}{N}.
\]
Then there exists a bijective correspondence ${\mathscr T} \colon K_I \to K_{\mathcal{E}}$ defined by the formula
\[
{\mathscr T}(m) = \left( \frac{N}{2s}-\frac{1}{p} \right) \left( \frac{\gamma sp}{2(p+1)s-Np}
\right)^{1+\frac{2sp}{2s-Np}} \left(\frac{1}{m} \right)^{\frac{2sp}{2s-Np}}.
\]
\end{lemma}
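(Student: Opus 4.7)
The plan is to chase values under the scaling $T^{\mu,\lambda}$ constructed in the previous lemma. Given $u\in \widetilde K_I$ with $I(u)=m$, let $v=u^{\mu,\lambda}=T^{\mu,\lambda}(u)\in\widetilde K_{\mathcal E}$ be determined by \eqref{normaelle2}, and aim to show $\mathcal E(v)=2\mathscr T(m)$. Because the previous lemma already gives $T^{\mu,\lambda}$ as a bijection $\widetilde K_I\to\widetilde K_{\mathcal E}$, once we verify that $\mathcal E(v)$ depends only on $m$ (and not on the particular representative $u$) through the claimed formula, the induced map on critical values will automatically be bijective.

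The first step is purely algebraic: with $\mu=\ell^{1/(2p)}$ and $\lambda=\ell^{1/(2s)}$, Lemma~\ref{lem:2.1} yields the key cancellation $\mu^2\lambda^{2s-N}=\mu^{2p+2}\lambda^{-N}=\ell^{k}$ where $k:=\frac{2s(p+1)-Np}{2sp}=1+\frac{1}{p}-\frac{N}{2s}$. Consequently, the two summands in $\mathcal E$ scale by the same factor, giving
\[
\mathcal E(v)=\ell^{k}\,\mathcal E(u).
\]

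Next, I would extract $\|u\|_2^2$ and $\mathcal E(u)$ from $m=I(u)$ using the identities available for critical points of $I$. Since $u$ solves $\tfrac12(-\Delta)^s u+u=|u|^{2p}u$, it satisfies both the Nehari identity $\tfrac12\|(-\Delta)^{s/2}u\|_2^2+\|u\|_2^2=\|u\|_{2p+2}^{2p+2}$ and the fractional Pohozaev identity $\tfrac{N-2s}{4}\|(-\Delta)^{s/2}u\|_2^2+\tfrac{N}{2}\|u\|_2^2=\tfrac{N}{2(p+1)}\|u\|_{2p+2}^{2p+2}$. Solving this linear $2\times2$ system for the kinetic and $L^{2p+2}$ terms in terms of $\|u\|_2^2$, and plugging back into $I(u)=\tfrac14\|(-\Delta)^{s/2}u\|_2^2+\tfrac12\|u\|_2^2-\tfrac{1}{2(p+1)}\|u\|_{2p+2}^{2p+2}=m$, yields
\[
\|u\|_2^2=\frac{m\,[2(p+1)s-Np]}{sp},\qquad \mathcal E(u)=2m-\|u\|_2^2=\frac{m(Np-2s)}{sp}.
\]

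Finally, I would insert the expression for $\|u\|_2^2$ into the defining relation $\ell^{1/p-N/(2s)}=\gamma/\|u\|_2^2$ from \eqref{normaelle2} to solve for $\ell$, and then raise to the power $k$. An exponent check gives $k/(1/p-N/(2s))=[2s(p+1)-Np]/(2s-Np)=1+2sp/(2s-Np)$, so that
\[
\ell^{k}=\Bigl(\tfrac{\gamma sp}{m\,[2(p+1)s-Np]}\Bigr)^{1+\frac{2sp}{2s-Np}}.
\]
Combining with $\mathcal E(v)=\ell^{k}\mathcal E(u)$ and noting that $\tfrac{Np-2s}{2sp}=\tfrac{N}{2s}-\tfrac{1}{p}$, the constant comes out exactly as $c=\mathcal E(v)/2=\mathscr T(m)$. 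The only real obstacle is bookkeeping the three exponents of $\ell$ so that the product lands on the prescribed form; once that is done the bijectivity assertion follows by inverting $T^{\mu,\lambda}$ in the reverse direction, which re-uses the same scaling identities.
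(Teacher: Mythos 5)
Your proposal is correct and follows essentially the same route as the paper: both rest on the scaling map $T^{\mu,\lambda}$ from the preceding lemma together with the Nehari and Pohozaev identities to express $\|u\|_2^2$, $\|(-\Delta)^{s/2}u\|_2^2$ and $\|u\|_{2p+2}^{2p+2}$ in terms of $m$, and then use the mass constraint $\|v\|_2^2=\gamma$ to pin down $\ell$. The only (harmless) differences are that you traverse the correspondence from $m$ to $c$ rather than from $c$ to $m$, and you shortcut the final bookkeeping by observing that both terms of $\mathcal{E}$ scale by the common factor $\ell^{k}$, whereas the paper computes the kinetic and potential terms separately.
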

 
\begin{proof} 
Pick $m \in K_I$. Then, there is some $u \in \mathcal{N}$ such that $I(u)=m$ and $I'(u)=0$. Therefore
\[
m = I(u) -\frac{1}{2p+2} \langle I'(u),u \rangle  = \frac{1}{2} \left( 1 - \frac{1}{p+1} \right) \|u\|_{H^s}^2 >0.
\]
For $c \in K_{\mathcal{E}} \cap \mathbb{R}^{-}$ we select $v \in \mathcal{M}_\gamma$ 
corresponding to $c$. In turn, there exists $\ell>0$ such that
$\frac{1}{2}(-\Delta)^s v-|v|^{2p}v=-\ell v.$
Let us set $T^{\mu,\lambda}(v) = v^{\mu,\lambda}$ with
$\lambda = \ell^{-1/(2s)}$ and $\mu = \ell^{-1/(2p)}$.
Then, $T^{\mu,\lambda}$ maps ${\mathcal M}_\gamma$ into 
${\mathcal N}$ and $v^{\mu,\lambda}$ solves 
$\frac{1}{2}(-\Delta)^s v^{\mu,\lambda}+v^{\mu,\lambda}=|v^{\mu,\lambda}|^{2p}v^{\mu,\lambda}.$
The Poh\v ozaev identity yields 
\[
\frac{N-2s}{4} \int |(-\Delta)^{\frac{s}{2}}v^{\mu,\lambda}|^2  + \frac{N}{2} \|v^{\mu,\lambda}\|_2^2 = \frac{N}{2p+2} \|v^{\mu,\lambda}\|_{2p+2}^{2p+2}.
\]
But $v^{\mu,\lambda} \in \mathcal{N}$, namely
\[
\|v^{\mu,\lambda}\|_2^2 + \frac{1}{2}\int |(-\Delta)^{\frac{s}{2}}v^{\mu,\lambda}|^2 = \int |v^{\mu,\lambda}|^{2p+2}.
\]
Hence 
\[
\left( \frac{N-2s}{4}-\frac{N}{4p+4} \right) \|(-\Delta)^{\frac{s}{2}} 
v^{\mu,\lambda}\|_2^2 + \left( \frac{N}{2} - \frac{N}{2p+2} \right) \|v^{\mu,\lambda}\|_2^2=0,
\]
and
\[
\left( \frac{1}{4} - \frac{1}{4p+4} \right)  \|(-\Delta)^{\frac{s}{2}}v^{\mu,\lambda}\|_2^2 + \left( \frac{1}{2}-\frac{1}{2p+2} \right) \|v^{\mu,\lambda}\|_2^2  =m,
\]
where $m=I(v^{\mu,\lambda})$. After trivial manipulations, we discover that
\begin{align*}
\|(-\Delta)^{\frac{s}{2}}v^{\mu,\lambda}\|_2^2 &= \frac{2Nm}{s}, \\
\|v^{\mu,\lambda}\|_2^2 &= \frac{2ms(p+1)-Nmp}{sp}, \\
\|v^{\mu,\lambda}\|_{2p+2}^{2p+2} &= \frac{2m(p+1)}{p}.
\end{align*}
Recalling Lemma \ref{lem:2.1}, we write the previous identities as
\begin{align*}
\frac{\mu^2}{\lambda^{N-2s}} \|(-\Delta)^{\frac{s}{2}}v\|_2^2 &= \frac{mN}{s}, \\
\frac{\mu^{2p+2}}{\lambda^N} \frac{1}{2p+2} \int |v|^{2p+2} &= \frac{m}{p}, \\
\frac{\mu^2}{\lambda^N} \|v\|_2^2 &= \frac{2m(p+1)s - mNp}{sp}.
\end{align*}
But $v \in \mathcal{M}_\gamma$, and hence 
\begin{equation*}
\gamma = \|v\|_2^2 = \ell^{\frac{1}{p}-\frac{N}{2s}} \frac{2m(p+1)s - mNp}{sp},
\end{equation*}
and
\[
\ell^{\frac{2s-Np}{2sp}} = \frac{\gamma sp}{2m(p+1)s-mNp}.
\]
Since $\lambda = \ell^{-\frac{1}{2s}}$, $\mu = \ell^{-\frac{1}{2p}}$, we find 
\[
\|(-\Delta)^{\frac{s}{2}}v\|_2^2 = \frac{\lambda^{N-2s}}{\mu^2} \frac{2mN}{s} = \left( \frac{\gamma sp}{2(p+1)s - Np} 
\right) ^{1+\frac{2sp}{2s-Np}} \frac{2N}{s} m^{-\frac{2sp}{2s-Np}}.
\]
Similarly,
\[
\frac{1}{2p+2}\|v\|_{2p+2}^{2p+2} = \frac{\lambda^N}{\mu^{2p+2}} \frac{m}{p} = \frac{1}{p} \left( \frac{\gamma sp}{2(p+1)s-Np}
\right)^{1+\frac{2sp}{2s-Np}} \left(\frac{1}{m} \right)^{\frac{2sp}{2s-Np}}. 
\]
To summarize, if $c<0$ is a constrained critical value of $\mathcal{E}$ on $\mathcal{M}_\gamma$ and $m$ is the corresponding critical value of $I$, then $c$ is given by 
\[
c=\left( \frac{N}{2s}-\frac{1}{p} \right) \left( \frac{\gamma sp}{2(p+1)s-Np}
\right)^{1+\frac{2sp}{2s-Np}} \left(\frac{1}{m} \right)^{\frac{2sp}{2s-Np}}. 
\]
This concludes the proof.
\end{proof}

\noindent
We also have the following

\begin{corollary}
\label{GS-char}
Assume that
\begin{equation}
\label{assumpttcorollary}
0<s<1,\quad\,\, 0<p<\frac{2s}{N},\quad\,\,
\gamma_0:=m_{{\mathcal N}}\frac{2(p+1)s - Np}{sp},
\quad\,\, m_{{\mathcal N}}:=\inf_{u\in {\mathcal N}} I(u).
\end{equation}
Then we have
\[
m_{{\mathcal N}}=\inf_{u\in {\mathcal M}_{\gamma_0}}I(u)=:m_{\gamma_0}.
\]
Furthermore, any $u_0\in{\mathcal N}$ with $I(u_0)=m_{{\mathcal N}}$ satisfies $\|u_0\|_2^2=\gamma_0$ and
${\mathcal E}(u_0)=\inf_{u\in {\mathcal M}_{\gamma_0}}{\mathcal E}(u)$.
\end{corollary}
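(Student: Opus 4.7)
The plan is to exploit the bijection $\mathscr{T}$ from Lemma \ref{mappa}, combined with the Pohozaev identity for ground states, and to close the argument with a standard concentration-compactness step tailored to the $L^2$-subcritical regime.

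First, I would pick any $u_0\in\mathcal{N}$ with $I(u_0)=m_{\mathcal{N}}$; existence is granted by Theorem \ref{th:2.3}. Standard Nehari theory (the Lagrange multiplier vanishes because $\langle J'(u_0),u_0\rangle = -2p\|u_0\|_{H^s}^2\neq 0$ on $\mathcal{N}$) makes $u_0$ a free critical point of $I$, so it solves \eqref{eq:2}. Repeating verbatim the Pohozaev+Nehari computation performed in the proof of Lemma \ref{mappa} gives
\[
\|u_0\|_2^2 \;=\; \frac{2(p+1)s-Np}{sp}\, m_{\mathcal{N}} \;=\; \gamma_0,
\]
which proves simultaneously the assertion $\|u_0\|_2^2=\gamma_0$ and the inclusion $u_0\in\mathcal{M}_{\gamma_0}$; consequently $m_{\gamma_0}\leq I(u_0)=m_{\mathcal{N}}$.

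For the reverse inequality, I would specialize the explicit formula of Lemma \ref{mappa} to $\gamma=\gamma_0$; the identity $\gamma_0 sp/(2(p+1)s-Np)=m_{\mathcal{N}}$ reduces it to
\[
\mathcal{E}(v)/2 \;=\; \left(\frac{N}{2s}-\frac{1}{p}\right)\, m_{\mathcal{N}}^{1+\frac{2sp}{2s-Np}}\, m^{-\frac{2sp}{2s-Np}}
\]
for every $v\in\widetilde{K}_{\mathcal{E}}$ corresponding to $m\in K_I$ via $\mathscr{T}^{-1}$. Since $p<2s/N$ makes the prefactor negative while $\frac{2sp}{2s-Np}>0$, the right-hand side is strictly increasing in $m$, and because $m\geq m_{\mathcal{N}}$ we obtain $\mathcal{E}(v)\geq\mathcal{E}(u_0)$ for every $v\in\widetilde{K}_{\mathcal{E}}$.

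Finally, to upgrade this from critical values to the actual infimum, I would use the $L^2$-subcritical condition: the Gagliardo--Nirenberg inequality \eqref{eq:1.7} renders $\mathcal{E}$ bounded below on $\mathcal{M}_{\gamma_0}$ and keeps minimizing sequences bounded in $H^s$, and a Cazenave--Lions concentration-compactness argument (with strict subadditivity $m_\gamma<m_{\gamma_1}+m_{\gamma_2}$ for $\gamma=\gamma_1+\gamma_2$) produces, up to translations, a minimizer $v_0\in\mathcal{M}_{\gamma_0}$ of $\mathcal{E}$, necessarily a constrained critical point in $\widetilde{K}_{\mathcal{E}}$. The preceding step forces $\mathcal{E}(v_0)\geq\mathcal{E}(u_0)$, while the minimizing property gives $\mathcal{E}(v_0)\leq\mathcal{E}(u_0)$; equality holds, $u_0$ also minimizes $\mathcal{E}$ on $\mathcal{M}_{\gamma_0}$, and
\[
m_{\gamma_0}\;=\;I(v_0)\;=\;\tfrac12\mathcal{E}(u_0)+\tfrac12\gamma_0\;=\;I(u_0)\;=\;m_{\mathcal{N}}.
\]
The main obstacle is precisely this concentration-compactness step: in the fractional setting the non-locality of $(-\Delta)^s$ makes Brezis--Lieb type splittings more delicate than in the local case, but under $Np<2s$ the possible loss of mass at infinity is controlled by the sharp fractional Gagliardo--Nirenberg inequality, so the standard Lions machinery still applies.
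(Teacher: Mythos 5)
Your proposal is correct and follows essentially the same route as the paper: both rest on the explicit bijection $\mathscr{T}$ of Lemma~\ref{mappa}, its monotonicity in the regime $Np<2s$, and the Pohozaev--Nehari computation (with $\ell=\lambda=\mu=1$) showing that a Nehari minimizer has $\|u_0\|_2^2=\gamma_0$. The only difference is one of explicitness: you split the equality into two inequalities and spell out the Cazenave--Lions concentration-compactness step needed to realize $m_{\gamma_0}$ as a constrained critical value, a point the paper's one-line identity $m_{\gamma_0}=\mathscr{T}(m_{\mathcal{N}})+\gamma_0/2$ leaves implicit.
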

\begin{proof}
Observe that, taking into account the monotonocity of ${\mathscr T}$, we obtain 
\begin{align*}
m_{\gamma_0}&=\inf_{u\in {\mathcal M}_{\gamma_0}}\frac{1}{2}{\mathcal E}(u)+\frac{\gamma_0}{2}=
{\mathscr T}(m_{{\mathcal N}})+\frac{\gamma_0}{2}   \\
&=\Big( \frac{N}{2s}-\frac{1}{p} \Big) \Big( \frac{\gamma_0 sp}{2(p+1)s-Np}
\Big)^{1+\frac{2sp}{2s-Np}} \Big(\frac{1}{m_{{\mathcal N}}} \Big)^{\frac{2sp}{2s-Np}}+\frac{\gamma_0}{2} 
=m_{{\mathcal N}},
\end{align*}
after a few computations and by the value of $\gamma_0$. 
This concludes the proof of the first assertion. Now, given $u_0\in{\mathcal N}$ with $I(u_0)=m_{{\mathcal N}}$,
by repeating the argument in the proof of Lemma~\ref{mappa} (namely by 
combining the energy, the Pohozaev and the Nehari identities) and by the definition of $\gamma_0$ we get $\|u_0\|_2^2=\gamma_0$
(notice that, from~\eqref{normaelle2}, it holds $\ell=1=\lambda=\mu$, i.e.\ $T^{\mu,\lambda}=T^{1/\mu,1/\lambda}={\rm Id}$).
The last assertion then follows immediately from $m_{{\mathcal N}}=m_{\gamma_0}$.
\end{proof}

\begin{corollary}
\label{GS-char-II}
Let $Q>0$ be the unique ground state solution to problem \eqref{ground-eq} and
let $s,p$ and $\gamma_0$ be as in \eqref{assumpttcorollary}. Then we have
\begin{equation}
\label{minsphere}
{\mathcal E}(Q)=\min\{{\mathcal E}(q): q\in H^s(\mathbb{R}^N,\C),
\,\|q\|_2=\gamma_0=\|Q\|_2\},
\end{equation}
and $\min\{{\mathcal E}(q): q\in H^s(\mathbb{R}^N,\C),
\,\|q\|_2=\|Q\|_2\}$ admits a unique solution.
\end{corollary}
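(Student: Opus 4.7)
\medskip
\noindent\textbf{Proof proposal.}
The plan is to derive the identity \eqref{minsphere} for free from Corollary~\ref{GS-char}, and then to obtain uniqueness of the minimizer by combining a symmetrization step, the scaling bijection of Lemma~\ref{mappa}, and the rigidity of the ground state recorded in Theorem~\ref{th:2.3}(v).

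First I would observe that, since $Q>0$ solves \eqref{ground-eq}, it lies on the Nehari manifold $\mathcal{N}$; being the unique positive ground state, it must realize the Nehari level, i.e.\ $I(Q)=m_{\mathcal N}$. Feeding this into Corollary~\ref{GS-char} gives $\|Q\|_2^2=\gamma_0$ and $\mathcal{E}(Q)=\inf_{u\in\mathcal{M}_{\gamma_0}}\mathcal{E}(u)$, which is exactly \eqref{minsphere}. Only the uniqueness of the minimizer remains.

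For uniqueness I would pick any $\tilde q\in H^s(\R^N,\C)$ realizing $\min_{\|q\|_2=\gamma_0}\mathcal{E}(q)$ and first reduce to a real non-negative profile: the Gagliardo seminorm representation of $\|(-\Delta)^{s/2}\cdot\|_2^2$ together with the pointwise bound $\bigl||\tilde q(x)|-|\tilde q(y)|\bigr|\le|\tilde q(x)-\tilde q(y)|$ yields $\mathcal{E}(|\tilde q|)\le\mathcal{E}(\tilde q)$ at the same $L^2$-norm, so we may assume $\tilde q\ge 0$. A Lagrange multiplier $\ell\in\R$ then appears, $\tfrac12(-\Delta)^s\tilde q-\tilde q^{2p+1}=-\ell\tilde q$, and testing this against $\tilde q$ as in the preceding bijection lemma forces $\ell>0$. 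Applying the scaling $T^{\mu,\lambda}$ of Lemma~\ref{mappa} with $\lambda=\ell^{-1/(2s)}$, $\mu=\ell^{-1/(2p)}$ produces an element of $\widetilde K_I$ which solves \eqref{ground-eq} and attains $m_{\mathcal N}$; by Theorem~\ref{th:2.3}(v) this rescaled profile must coincide with $Q(\cdot-x_0)$ for some $x_0\in\R^N$. Finally, the normalization $\|\tilde q\|_2^2=\gamma_0$ inserted into \eqref{normaelle2} yields $\ell^{1/p-N/(2s)}=1$, and since $0<p<2s/N$ the exponent is strictly positive, forcing $\ell=1$ and hence $\lambda=\mu=1$. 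Restoring the phase factor suppressed by the symmetrization then shows that every complex minimizer is of the form $e^{\mathrm{i}\theta}Q(\cdot-x_0)$, which is the intended meaning of the uniqueness claim.

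The step I expect to be the most delicate is the symmetrization itself: because $(-\Delta)^s$ admits no pointwise chain rule, the inequality $\mathcal{E}(|\tilde q|)\le\mathcal{E}(\tilde q)$ cannot be obtained by differentiating $|\tilde q|^2$ and must instead be extracted from the singular-integral representation of the fractional Dirichlet form. Once this is in place, everything else is bookkeeping on the scalings already developed in Lemma~\ref{mappa} and Corollary~\ref{GS-char}.
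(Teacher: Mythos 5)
Your proposal is correct and follows the same route as the paper, whose own proof is just the one-line citation ``follows by Corollary~\ref{GS-char} and by the uniqueness of ground state solutions'': you identify $\mathcal{E}(Q)$ with the constrained minimum via Corollary~\ref{GS-char}, and reduce any minimizer to the unique ground state of Theorem~\ref{th:2.3}(v). The extra steps you supply (the modulus inequality for the Gagliardo seminorm, the Lagrange multiplier and the scaling $T^{\mu,\lambda}$ forcing $\ell=1$, and the equality analysis recovering the constant phase) are exactly the details the paper leaves implicit, and they are all consistent with the machinery of Lemma~\ref{mappa}.
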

\begin{proof}
The assertion follows by Corollary~\ref{GS-char} and by the uniqueness of ground state solutions.
\end{proof}

\section{Spectral analysis of linearization}

\noindent
In this section we perform a spectral analysis of the linearized operator at a non degenerate ground state $Q$
\begin{equation*} 
L_{+} = \frac{1}{2}(-\Delta)^s +1 - (2p+1)Q^{2p}
\end{equation*}
acting on $L^2(\mathbb{R}^N,\mathbb{C})$. Let us introduce the closed subspaces of $H^s(\mathbb{R}^N,\mathbb{C})$
\begin{align*}
\mathcal{V} &= \left\{ u \in H^s(\mathbb{R}^N,\mathbb{C}) \mid \langle u,Q \rangle_2=0 \right\} \\
\mathcal{V}_0 &= \left\{ u \in H^s(\mathbb{R}^N,\mathbb{C}) \mid \left\langle u,Q \right\rangle_2=\Big\langle u,H(Q) \frac{\partial Q}{\partial x_j} \Big\rangle_2=0, \ j=1,2,\ldots,N \right\},
\end{align*}
where
$H(Q)=(2p+1)Q^{2p}$. 
\begin{lemma}
Assume that
\[
0<s<1,\quad 0<p<\frac{2s}{N}
\]
and define
\begin{equation*}
\alpha = \inf \left\{ \langle L_{+}(u),u \rangle \mid u \in \mathcal{V}_0,\ \|u\|_2=1 \right\}.
\end{equation*}
Then $\alpha > 0$.
\end{lemma}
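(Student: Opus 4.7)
The plan is to argue by contradiction: assume $\alpha = 0$, extract via compactness a minimizer $u_{*}\in\mathcal{V}_0$ with $\|u_{*}\|_2=1$ at which $\langle L_{+} u_{*}, u_{*}\rangle = 0$, and then derive $u_{*}=0$ from the Euler--Lagrange equation, the non-degeneracy of $Q$, and the subcritical exponent condition $p<2s/N$. The starting non-negativity $\alpha\geq 0$ is inherited from Corollary~\ref{GS-char-II}: since $Q$ minimizes $\mathcal{E}$ on $\mathcal{M}_{\gamma_0}$, a direct Lagrange-multiplier computation shows that the constrained Hessian at $Q$ equals $2\langle L_{+} v, v\rangle$ on the tangent space $\mathcal{V}=\{v:\langle v,Q\rangle_2=0\}$, whence $\langle L_{+} v, v\rangle\geq 0$ for all $v\in\mathcal{V}\supset\mathcal{V}_0$.

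For the compactness, pick a minimizing sequence $\{u_n\}\subset\mathcal{V}_0$ with $\|u_n\|_2=1$ and $\langle L_{+} u_n, u_n\rangle\to 0$. Boundedness of $Q$ bounds $\{u_n\}$ in $H^s(\R^N)$, so up to a subsequence $u_n\rightharpoonup u_{*}$ weakly in $H^s$ and strongly in $L^2_{\mathrm{loc}}$. The spatial decay of $Q^{2p}$ (Theorem~\ref{th:2.3}(iii)) forces $\int Q^{2p} u_n^2 \to \int Q^{2p} u_{*}^2$, and the constraints defining $\mathcal{V}_0$ are weakly continuous, so $u_{*}\in\mathcal{V}_0$. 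The case $u_{*}=0$ is ruled out because then $\int Q^{2p} u_n^2 \to 0$ would give $\liminf \langle L_{+} u_n, u_n\rangle\geq \|u_n\|_2^2 = 1$, contradicting $\langle L_{+} u_n, u_n\rangle \to 0$. A comparison of weak and strong norms then yields $\|u_{*}\|_2=1$ and $\langle L_{+} u_{*}, u_{*}\rangle=0$.

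Since $u_{*}$ is a constrained minimizer, its Euler--Lagrange equation reads
\[
L_{+} u_{*} = \mu u_{*} + \nu Q + \sum_{j=1}^N \nu_j\, H(Q)\,\frac{\partial Q}{\partial x_j}.
\]
Pairing with $u_{*}$ gives $\mu=0$. Pairing with each $\partial Q/\partial x_k\in\ker L_{+}$ (Theorem~\ref{th:2.3}(iv)) and using $\int Q\,\partial Q/\partial x_k=0$ reduces to a linear system whose matrix $M_{jk}=(2p+1)\int Q^{2p}(\partial Q/\partial x_j)(\partial Q/\partial x_k)$ is diagonal with strictly positive entries (radial symmetry of $Q$), so $\nu_j=0$. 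Hence $L_{+} u_{*}=\nu Q$. To invert $L_{+}$ on $Q$, differentiate the rescaled family $Q_\omega(x)=\omega^{1/(2p)}Q(\omega^{1/(2s)}x)$, which satisfies $\tfrac{1}{2}(-\Delta)^s Q_\omega+\omega Q_\omega = Q_\omega^{2p+1}$, at $\omega=1$ to obtain
\[
L_{+} \psi = -Q, \qquad \psi := \frac{1}{2p}\,Q + \frac{1}{2s}\,x\cdot\nabla Q.
\]
Thus $u_{*} = -\nu\psi + \sum_j a_j\,\partial Q/\partial x_j$. A short integration by parts gives
\[
\langle \psi, Q\rangle_2 = \Big(\frac{1}{2p}-\frac{N}{4s}\Big)\|Q\|_2^2 = \frac{2s-Np}{4sp}\,\|Q\|_2^2,
\]
which is nonzero precisely under the subcritical assumption $p<2s/N$. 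Consequently $\langle u_{*}, Q\rangle_2=0$ forces $\nu=0$, so $u_{*}\in\ker L_{+}$; the remaining constraints $\langle u_{*}, H(Q)\,\partial Q/\partial x_k\rangle_2=0$ then become $Ma=0$, yielding $u_{*}=0$, in contradiction with $\|u_{*}\|_2=1$.

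The hard part is the compactness step: ruling out loss of mass to infinity for the minimizing sequence. The decisive ingredient is that the only non-coercive piece of $\langle L_{+}\cdot,\cdot\rangle$ is the weight $Q^{2p}$, whose decay combined with local compactness of the $H^s$ embedding makes this term sequentially continuous on weakly convergent bounded sequences. A secondary but essential input is the subcritical threshold $p<2s/N$: it is precisely what guarantees $\langle \psi, Q\rangle_2\neq 0$, and at the mass-critical value $p=2s/N$ the inner product vanishes and the argument breaks down, consistent with the well-known instability of ground states at critical mass.
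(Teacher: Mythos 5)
Your proof is correct and follows essentially the same route as the paper: nonnegativity of $\alpha$ from the variational characterization of $Q$, compactness of the multiplication operator by $Q^{2p}$ to upgrade the minimizing sequence to a minimizer $u_{*}$ with $\|u_{*}\|_2=1$, Lagrange multipliers reducing to $L_{+}u_{*}=\nu Q$, an explicit preimage of $Q$ under $L_{+}$, and the orthogonality $\langle u_{*},Q\rangle_2=0$ combined with $p<2s/N$ forcing $u_{*}=0$. The only cosmetic difference is that you obtain $L_{+}\bigl(\tfrac{1}{2p}Q+\tfrac{1}{2s}\,x\cdot\nabla Q\bigr)=-Q$ by differentiating the rescaled family $Q_\omega$ in $\omega$, whereas the paper derives the equivalent identity from the commutator relation $(-\Delta)^s(x\cdot\nabla u)=2s(-\Delta)^s u+x\cdot\nabla(-\Delta)^s u$, and the paper eliminates the kernel component before, rather than after, the multiplier in front of $Q$.
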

\begin{proof}
Firstly, we claim that $\alpha \geq 0$. Indeed, $\partial Q/\partial x_j \in \mathcal{V}$ for each $j=1,\ldots,N$, and 
\[
\langle L_{+}(\partial Q/\partial x_j),\partial Q/\partial x_j\rangle =0.
\]
In addition, since (see Corollary~\ref{GS-char}) $Q$ minimizes $\mathcal{E}(u)$ over the constraint $\mathcal{M}=\{u \in H^s(\mathbb{R}^N,\mathbb{C}) \mid \|u\|_2 = \|Q\|_2 \}$, it follows that $Q$ also minimizes $2I(u)=\mathcal{E}(u)+\|u\|_2^2$ over the same constraint. In particular, $Q$ is a constrained critical point of $I$, and a direct computation shows that the second derivative $I''(Q)$ is positive semi-definite on $\mathcal{V}$. Therefore
\begin{equation} \label{eq:8}
\inf \left\{ \langle L_{+}(u),u \rangle \mid u \in \mathcal{V} \right\}=0.
\end{equation}
Since
\[
\alpha \geq \inf \left\{ \langle L_{+}(u),u \rangle \mid u \in \mathcal{V} \right\},
\]
the claim is proved.
We assume now, for the sake of contradiction, that $\alpha =0$. 
Pick any minimizing sequence $\{u_n\}_n$ for $\alpha$, so that $\|u_n\|_2=1$ for every $n \in \mathbb{N}$, $u _n \in \mathcal{V}_0$ and $\langle L_{+}(u_n),u_n \rangle=o(1)$ as $n\to\infty$. 
On the other hand,
\begin{equation*}
\langle L_{+}(u_n),u_n \rangle= 
\frac{1}{2}\int |(-\Delta)^{\frac{s}{2}} u_n|^2 + \int |u_n|^2  - (2p+1) \int Q^{2p} |u_n|^2,
\end{equation*}
and hence 
\begin{equation*}
\int_{\mathbb{R}^N} |(-\Delta)^{\frac{s}{2}} u_n|^2 \leq  C \left( o(1)+(2p+1) \int Q^{2p}|u_n|^2 \right)
\leq C+C \int |u_n|^2 \leq C.
\end{equation*}
The sequence $\{u_n\}_n$ being bounded in
$H^s(\mathbb{R}^N,\mathbb{C})$, we can assume without loss of
generality that $u_n \rightharpoonup u$ in
$H^s(\mathbb{R}^N,\mathbb{C})$, and $u \in \mathcal{V}_0$ because
$\mathcal{V}_0$ is weakly closed.

Notice that the operator $\{u \mapsto H(Q)u\}$ is a multiplication
operator by the function $Q^{2p}$ which tends to zero at
infinity. Given $\rho>0$, let us write
\[
\chi_\rho (x)=
\begin{cases}
1 &\text{if $|x| \leq \rho$}\\
0 &\text{if $|x|>\rho$}.
\end{cases}
\]
It follows that 
\begin{equation*}
  \int Q^{2p} |u|^2 - \left| \chi_\rho Q \right|^{2p} |u|^2 = \int_{\mathbb{R}^N \setminus B(0,\rho)} Q^{2p} |u|^2  \leq \sup_{x \in \mathbb{R}^N \setminus B(0,\rho)} Q(x)^{2p} \int |u|^2.
\end{equation*}
Then the compact embedding of $H^s(B(0,\rho))$ into $L^2(B(0,\rho))$
yields the compactness of the multiplication operator $H(Q)$ (see also
\cite[Theorem 10.20]{Weidmann}) and the convergence $\langle u_n ,
H(Q)u_n \rangle_2=\langle u,H(Q)u \rangle_2+o(1)$.  As a consequence,
\begin{equation*}
  0 \leq \langle L_{+}(u),u \rangle \leq \liminf_{n \to +\infty} \left( \|u_n\|_{H^s}^2 - \langle u_n, H(Q)u_n \rangle_2 \right) 
  = \lim_{n \to +\infty} \langle L_{+}(u_n),u_n \rangle =0,
\end{equation*}
forcing $\langle L_{+}(u),u \rangle =0$ and $\langle L_{+}(u_n),u_n
\rangle = \langle L_{+}(u),u \rangle+o(1)$.  By lower semicontinuity,
we get
\begin{align*}
  \|u\|_{H^s}^2 &\leq \liminf_{n \to +\infty} \|u_n\|_{H^s}^2 \leq \limsup_{n \to
    +\infty} \|u_n\|_{H^s}^2
  = \lim_{n \to +\infty} \langle L_{+}(u_n),u_n \rangle + \langle u_n , H(Q)u_n \rangle_2  \\
  &= \langle L_{+}(u),u \rangle + \langle u,H(Q)u \rangle_2 = \|u\|_{H^s}^2.
\end{align*}
So far we have proved that $u_n \to u$ strongly in
$H^s(\mathbb{R}^N,\mathbb{C})$ and that $u$ is a minimizer for
$\alpha$. From now on, for ease of notation, we assume that
$N=1$; the general case is similar, but we need to replace $Q'$ with either any partial derivative or with the gradient of $Q$ in the following arguments. Hence, the assumption reads as $p<2s$.  Let $\lambda$, $\mu$
and $\gamma$ be the Lagrange multipliers associated to $u$, so that,
for all $v \in H^s(\mathbb{R}^N,\mathbb{C})$,
\begin{equation*}
\langle L_{+}u,v \rangle =\lambda \langle u,v \rangle_2+\mu \langle Q,v \rangle_2 + \gamma \langle  H(Q)Q',v \rangle_2. 
\end{equation*}
Choosing $v=u \in \mathcal{V}_0$ immediately yields $\lambda=0$.
Instead, choosing $v = Q'$ and recalling also that $Q \perp Q'$ in $L^2(\mathbb{R}^N,\mathbb{C})$, we find
\begin{equation*}
0= \langle L_{+}u,Q' \rangle = \mu \langle Q,Q' \rangle_2 + \gamma \langle  H(Q)Q',Q' \rangle_2 = \gamma \langle H(Q)Q',Q' \rangle_2.
\end{equation*}
Now, 
\[
\langle H(Q)Q',Q' \rangle_2 = (2p+1) \int Q^{2p}|Q'|^2 >0,
\]
and this yields $\gamma =0$. Hence $L_{+}u=\mu Q$. To proceed further,
we compute 
\[
	L_{+}(x Q' ) = \frac{1}{2}(-\Delta )^s (x Q') +xQ' -(2p+1)Q^{2p}(x
Q')
\]
and we use the commutator identity (see \cite[Remark 2.2]{R-OS}
or \cite[Lemma 5.1]{Frank})
\begin{equation*}
 (-\Delta)^s(x \cdot \nabla u) =2s (-\Delta)^su + x \cdot \nabla (-\Delta)^s u
\end{equation*}
with $u=Q$, which implies 
\[
(-\Delta )^s (x Q') -x (-\Delta)^sQ' = 2s (-\Delta)^s Q.
\]
But $\frac{1}{2}(-\Delta)^s Q' + Q' -(2p+1)Q^{2p}Q'=0$
and hence 
\begin{equation} \label{eq:6}
L_{+}(xQ')=s (-\Delta)^s Q.
\end{equation}
Similarly,
\begin{equation} \label{eq:7}
L_{+} \Big( \frac{s}{p}Q \Big) = \frac{1}{2}(-\Delta)^s \frac{s}{p}Q + \frac{s}{p}Q - (2p+1) Q^{2p} \frac{s}{p}Q
= \frac{s}{p} \left(-2p Q^{2p} Q\right) = -2s Q^{2p+1}.
\end{equation}
Putting together \eqref{eq:6} and \eqref{eq:7} we see that
\[
L_{+} \Big( xQ' + \frac{s}{p}Q \Big) = -2sQ.
\]
As a consequence,
\[
L_{+}u = \mu Q = L_{+} \Big( -\frac{\mu}{2s} \Big( xQ' + \frac{s}{p}Q\Big) \Big).
\]
But $Q$ is a non degenerate ground state, namely 
$\ker L_{+} = \operatorname{span} \{Q'\}$, and there is $\vartheta \in \mathbb{R}$ with
\[
u + \frac{\mu}{2s} \Big( xQ' + \frac{s}{p}Q\Big) = \vartheta Q'.
\]
We claim that $\vartheta=0$. Indeed,
\[
u = -\frac{\mu}{2s} \Big( xQ' + \frac{s}{p}Q\Big) + \vartheta Q',
\]
and multiplying by $(2p+1)Q^{2p}$ we get
\[
(2p+1)Q^{2p}u = -\frac{\mu}{2s} (2p+1)Q^{2p} x Q' - \frac{\mu}{2p} (2p+1) Q^{2p} + (2p+1) \vartheta Q^{2p}Q'.
\]
Since $u \in \mathcal{V}_0$, 
\[
\langle (2p+1)Q^{2p}u , Q' \rangle_2 = \langle u, (2p+1) Q^{2p}Q' \rangle_2 =0.
\]
Since $Q$ is an even function, $Q'$ is an odd function, and this implies
\begin{align*}
\langle H(Q)Q,Q' \rangle_2 &= (2p+1) \int Q^{2p+1}Q' =0 \\
\langle H(Q)Q',Q' \rangle_2 &= (2p+1) \int Q^{2p}x (Q')^2=0.
\end{align*}
On the other hand,
\[
\langle H(Q)\vartheta Q',Q' \rangle_2 =(2p+1) \vartheta \int Q^{2p} (Q')^2 >0,
\]
and we conclude that $\vartheta =0$. hence
\[
u = -\frac{\mu}{2s} \Big( xQ'+\frac{s}{p}Q \Big)
\]
and
\[
0 = \int uQ = -\frac{\mu}{2s} \int x Q Q'  - \frac{\mu}{2p} \int Q^2.
\]
It is readily seen that $\mu\neq 0$. Moreover, an integration by parts shows that 
\[
\int x Q Q' = -\frac{1}{2} \int Q^2
\]
and thus 
\[
\Big( \frac{1}{2p}-\frac{1}{4s} \Big) \int Q^2=0.
\]
Since $p<2s$, we deduce $Q=0$, which is clearly impossible. The proof is complete.
\end{proof}
\begin{remark}
 Actually the previous proof yields a positive constant $\alpha_0$ such that
 \begin{equation*}
  \langle L_{+}(v),v \rangle \geq \alpha_0 \|v\|_2^2 \quad\text{for every $v \in \mathcal{V}_0$}.
 \end{equation*}
Hence $\mathcal{V}_0$ becomes a complete normed space with respect to the norm $v \mapsto \sqrt{\langle L_{+}v,v \rangle}$.
Now the Closed Graph Theorem tells us that, for a suitable $\overline{\alpha}>0$,
\begin{equation} \label{eq:19}
 \langle L_{+}(v),v \rangle \geq \overline{\alpha} \|v\|_{H^s}^2 \quad\text{for every $v \in \mathcal{V}_0$}.
\end{equation}
\end{remark}
\begin{lemma}
Suppose $\phi \in L^2(\mathbb{R}^N,\mathbb{C})$ satisfies $\|\phi\|_2=\|Q\|_2$. Then
\begin{equation} \label{eq:11}
 \langle Q,\Re (\phi-Q) \rangle_{2} = -\frac{1}{2} \left( \|\Re (\phi-Q)\|_2^2 + \|\Im (\phi-Q) \|_2^2   \right) 
 = -\frac{1}{2} \|\phi-Q\|_2^2.
\end{equation}
\end{lemma}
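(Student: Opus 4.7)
The plan is a direct algebraic verification exploiting the hypothesis $\|\phi\|_2 = \|Q\|_2$ together with the fact that $Q$ is real valued. First I would decompose $\phi = \phi_1 + \mathrm{i}\phi_2$ into real and imaginary parts $\phi_1,\phi_2\colon\mathbb{R}^N\to\mathbb{R}$. Since $Q$ is real, we have $\Re(\phi-Q) = \phi_1 - Q$ and $\Im(\phi-Q) = \phi_2$, and the definition \eqref{eq:innerC} of the real inner product on $\mathbb{C}$ yields immediately
\[
\|\phi-Q\|_2^2 = \|\phi_1-Q\|_2^2 + \|\phi_2\|_2^2 = \|\Re(\phi-Q)\|_2^2 + \|\Im(\phi-Q)\|_2^2,
\]
which is the second equality in \eqref{eq:11}.

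For the first equality, I would expand the squared norm and use that $Q$ is real to drop the imaginary part of $\phi$ from the cross term:
\[
\|\phi-Q\|_2^2 = \|\phi\|_2^2 - 2\langle \phi,Q\rangle_2 + \|Q\|_2^2 = \|\phi\|_2^2 - 2\langle \Re\phi,Q\rangle_2 + \|Q\|_2^2.
\]
Invoking the hypothesis $\|\phi\|_2 = \|Q\|_2$ collapses the first and third terms into $2\|Q\|_2^2$, giving
\[
\|\phi-Q\|_2^2 = 2\|Q\|_2^2 - 2\langle \Re\phi,Q\rangle_2 = -2\langle Q, \Re\phi - Q\rangle_2 = -2\langle Q,\Re(\phi-Q)\rangle_2,
\]
which rearranges to the first equality in \eqref{eq:11}.

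There is no essential obstacle here; the statement is really a polarization-type identity whose content is that the $L^2$ sphere of radius $\|Q\|_2$ is tangent, at $Q$, to the real hyperplane orthogonal to $Q$. The only point to be careful about is the bookkeeping between the real inner product on $\mathbb{C}$ and the usual complex pairing, which is exactly what allows the imaginary part of $\phi$ to disappear from $\langle \phi,Q\rangle_2$ when $Q$ is real.
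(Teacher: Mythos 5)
Your proof is correct and is exactly the direct computation the paper has in mind: its own proof consists of the single line ``it follows from a direct computation and the fact that $Q$ is real-valued,'' and your expansion of $\|\phi-Q\|_2^2$ combined with the constraint $\|\phi\|_2=\|Q\|_2$ is that computation written out. Nothing further is needed.
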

\begin{proof}
 It follows from a direct computation and the fact that $Q$ is real-valued. 
\end{proof}
\begin{proposition}
\label{stima+}
 Assume
 \[
  0<s<1, \quad 1<p<\frac{2s}{N}.
 \]
Let us take $\phi$ as in (\ref{eq:11}), such that
\begin{equation} \label{eq:12}
\Big\langle \Re (\phi-Q) , H(Q)\frac{\partial Q}{\partial x_j} \Big\rangle_2 =0 \quad\text{for $j=1,2,\ldots,N$}. 
\end{equation}
Then
\begin{equation} \label{eq:13}
\langle L_{+} (\Re (\phi-Q)),\Re (\phi-Q)\rangle \geq C \|\Re (\phi-Q)\|_{H^s}^2 - C_1 \|\phi-Q\|_{H^s}^4 - C_2 \|\phi-Q\|_{H^s}^3 
\end{equation}
for suitable constants $C$, $C_1$, $C_2>0$.
\end{proposition}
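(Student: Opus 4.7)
The plan is to reduce the estimate to the coercivity bound~\eqref{eq:19}, which applies on the subspace $\mathcal{V}_0$, by projecting $v := \Re(\phi-Q)$ onto $\mathcal{V}_0$ and tracking the error terms.

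First I would observe that $v$ already satisfies the second orthogonality condition defining $\mathcal{V}_0$, by the hypothesis~\eqref{eq:12}, but it fails the first one: by \eqref{eq:11} we only have
\[
\langle v, Q\rangle_2 = -\tfrac{1}{2}\|\phi-Q\|_2^2,
\]
which is quadratic in $\|\phi-Q\|_{H^s}$. Thus I define
\[
c := \frac{\langle v, Q\rangle_2}{\|Q\|_2^2}, \qquad v_0 := v - cQ,
\]
so that $|c| \leq C\|\phi-Q\|_{H^s}^2$. By construction $\langle v_0, Q\rangle_2=0$; moreover, since integration by parts gives $\langle Q, H(Q)\partial Q/\partial x_j\rangle_2 = \frac{2p+1}{2p+2}\int \partial_{x_j}(Q^{2p+2}) = 0$, subtracting $cQ$ does not spoil the second orthogonality condition. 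Hence $v_0 \in \mathcal{V}_0$ and the coercivity bound yields $\langle L_+ v_0, v_0\rangle \geq \overline{\alpha}\,\|v_0\|_{H^s}^2$.

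Next I would expand bilinearly, using $v = v_0 + cQ$:
\[
\langle L_+ v, v\rangle = \langle L_+ v_0, v_0\rangle + 2c\,\langle L_+ v_0, Q\rangle + c^2 \langle L_+ Q, Q\rangle.
\]
Since $\frac{1}{2}(-\Delta)^s Q + Q = Q^{2p+1}$, a direct computation gives $L_+ Q = -2p\,Q^{2p+1}$, so $\langle L_+ Q,Q\rangle$ is bounded and $|\langle L_+ v_0, Q\rangle| = 2p|\int Q^{2p+1} v_0| \leq C\|v_0\|_{H^s}$ by the $L^2$ boundedness of $Q^{2p+1}$ (use Theorem~\ref{th:2.3}(iii) and (vi)). Combining with the triangle inequality $\|v_0\|_{H^s} \leq \|v\|_{H^s} + |c|\,\|Q\|_{H^s} \leq \|\phi-Q\|_{H^s} + C\|\phi-Q\|_{H^s}^2$, I obtain
\[
\langle L_+ v,v\rangle \geq \overline{\alpha}\,\|v_0\|_{H^s}^2 - C_1\|\phi-Q\|_{H^s}^3 - C_2\|\phi-Q\|_{H^s}^4.
\]

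Finally, to pass from $\|v_0\|_{H^s}^2$ back to $\|v\|_{H^s}^2 = \|\Re(\phi-Q)\|_{H^s}^2$, I use $\|v\|_{H^s}^2 \leq 2\|v_0\|_{H^s}^2 + 2c^2\|Q\|_{H^s}^2$, which absorbs an additional $O(\|\phi-Q\|_{H^s}^4)$ error into the cubic/quartic terms already present. The conclusion~\eqref{eq:13} follows with $C = \overline{\alpha}/2$ after relabelling constants. The only mildly subtle point is checking that $L_+$ is well-defined and bounded on $H^s$ in the cross-term estimate $|\langle L_+ v_0, Q\rangle|$; but since $L_+ Q = -2p Q^{2p+1}$ is an explicit $L^2$ function, the estimate is actually a straightforward pairing, not an operator bound. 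The argument uses only the quantitative coercivity~\eqref{eq:19}, the structure of $L_+Q$, and elementary algebra, so no analytic obstacle arises beyond the bookkeeping of the orders in $\|\phi-Q\|_{H^s}$.
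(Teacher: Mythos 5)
Your proposal is correct and follows essentially the same route as the paper: your decomposition $v=v_0+cQ$ is exactly the paper's splitting $U=U_\perp+U_\parallel$ with $U_\parallel=\langle U,Q\rangle_2 Q$, and both arguments rest on the coercivity bound \eqref{eq:19} applied to the component in $\mathcal{V}_0$, with the cross and parallel terms absorbed into the cubic and quartic errors via \eqref{eq:11}. The only cosmetic difference is that you estimate the cross term directly through $L_+Q=-2pQ^{2p+1}$ and Cauchy--Schwarz in $L^2$, whereas the paper rewrites it using the equation for $Q$ in terms of $(-\Delta)^{s/2}$ pairings; both yield the same $O(\|\phi-Q\|_{H^s}^3)$ bound.
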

\begin{proof}
 It is not restrictive to fix $\|Q\|_2=1$. We decompose $U=\Re (\phi-Q)$ as $U=U_\parallel + U_\perp$, where $U_\parallel = \langle U,Q \rangle_2 Q$. By formula \eqref{eq:11}, we get
\begin{equation*}
\|(-\Delta)^{\frac{s}{2}}U\|_2^2 \leq 2 \|(-\Delta)^{\frac{s}{2}}U_\parallel\|_2^2 + 2 \|(-\Delta)^{\frac{s}{2}}U_\perp\|_2^2 = \frac{1}{2} \|\phi-U\|_2^4 \|(-\Delta)^{\frac{s}{2}}Q\|_2^2 + 2 \|(-\Delta)^{\frac{s}{2}}U_\perp\|_2^2,
\end{equation*} 
so that 
\begin{equation} \label{eq:15}
 \| (-\Delta)^{\frac{s}{2}} U_\perp\|_2^2 \geq \frac{1}{2}\| (-\Delta)^{\frac{s}{2}} U\|_2^2 - \frac{1}{4} \|\phi-Q\|_2^4 \| (-\Delta)^{\frac{s}{2}} Q\|_2^2.
\end{equation}
The symmetry of $L_{+}$ implies
 \begin{equation} \label{eq:14}
  \langle L_{+}U,U \rangle = \langle L_{+}U_\parallel,U_\parallel \rangle + 2 \langle L_{+}U_\perp , U_\parallel \rangle + \langle L_{+}U_\perp,U_\perp \rangle.
 \end{equation}
 But $\langle U_\parallel,H(Q)\partial Q / \partial x_j \rangle_2 =0$, hence also $\langle U_\perp , H(Q) \partial Q / \partial x_j \rangle_2 =0$ by (\ref{eq:12}).
As a consequence, $U_\perp \in \mathcal{V}_0$. We deduce from (\ref{eq:19}), (\ref{eq:11}) and (\ref{eq:15}) that
\begin{equation}\label{eq:16}
\langle L_{+}U_\perp , U_\perp \rangle \geq C \left( \|U\|_{H^s}^2 - \|\phi-Q\|_2^4 \right)
\end{equation}
Again, from \eqref{eq:11}, we get
\begin{align} \label{eq:17}
 \langle L_{+}U_\perp , U_\parallel \rangle &= \langle Q,U \rangle_2 \langle L_{+}U_\perp , Q \rangle = -\frac{1}{2} \|\phi-Q\|_2^2 \langle U_\perp,L_+Q \rangle \notag \\
&= \frac{p}{2} \|\phi-Q\|_2^2 \Big(\Re\int (-\Delta)^{s/2}U (-\Delta)^{s/2}Q-
\langle U,Q \rangle_2\|(-\Delta)^{s/2}Q\|^2_2\Big)  \notag\\
 &\geq - \frac{p}{2}\|\phi-Q\|_2^2 \|(-\Delta)^{s/2}(\phi-Q)\|_2\|(-\Delta)^{s/2}Q\|_2
 \geq -C\|\phi-Q\|_{H^s}^3.
\end{align}
Finally, we get
\begin{equation} \label{eq:18}
 \langle L_{+} U_\parallel,U_\parallel \rangle = \langle U,Q \rangle_2^2 \langle L_{+}Q,Q \rangle = \frac{1}{4} \|\phi-Q\|_2^4 \langle L_{+}Q,Q \rangle = -\frac{p}{2} \|Q\|_{H^s}^2 \|\phi-Q\|_2^4.
\end{equation}
Putting together (\ref{eq:14}), (\ref{eq:16}), (\ref{eq:17}) and (\ref{eq:18}), we complete the proof.
\end{proof}
\noindent
Let us denote by $L_{-}$ the imaginary part of the linearized operator at $Q$, namely
\begin{equation*} 
L_{-} = \frac{1}{2}(-\Delta)^s  + 1 - Q^{2p}.
\end{equation*}
\begin{proposition}
\label{stima-}
There results
\begin{equation*} 
\inf_{\substack{v \neq 0 \\ \langle v,Q \rangle_{H^s} =0}} \frac{\langle L_{-}v,v \rangle}{\|v\|_{H^s}^2} >0.
\end{equation*}
\end{proposition}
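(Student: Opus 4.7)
The plan is to rewrite the quadratic form as $\langle L_{-}v,v\rangle = \|v\|_{H^s}^2 - \int Q^{2p}|v|^2$ and to establish strict positivity on the orthogonal complement of $Q$ by a compactness/contradiction argument, in parallel with the treatment of $L_{+}$ in the previous lemma. As a first preliminary step I would show $\langle L_{-}v,v\rangle \geq 0$ for every $v\in H^s(\mathbb{R}^N)$. This I would deduce from Corollary~\ref{GS-char-II}: since $Q$ minimizes $\mathcal{E}$ on $\mathcal{M}_{\gamma_0}$, one can compute the second variation of $\mathcal{E}$ along the mass-preserving path $\phi_t := \|Q\|_2(Q+\mathrm{i}\,tw)/\|Q+\mathrm{i}\,tw\|_2$ with $w$ real. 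Expanding to order $t^2$ and simplifying via the Nehari identity $\tfrac{1}{2}\|(-\Delta)^{s/2}Q\|_2^2 + \|Q\|_2^2 = \int Q^{2p+2}$ (obtained by testing~\eqref{ground-eq} against $Q$), the coefficient of $t^2$ collapses to exactly $\langle L_{-}w,w\rangle$, which must therefore be nonnegative.

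Next I would identify $\ker L_{-} = \mathrm{span}(Q)$. The inclusion $\supseteq$ is immediate from~\eqref{ground-eq}. For the converse, the first step tells us that the bottom of the spectrum of $L_{-}$ is $0$ and $Q>0$ is an associated eigenfunction. Since the semigroup $e^{-tL_{-}}$ is positivity-improving (the fractional heat kernel being strictly positive, e.g.\ by the Bochner subordination formula), the Perron--Frobenius principle ensures that the ground state is simple, giving $\ker L_{-} = \mathrm{span}(Q)$.

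With these ingredients in hand I can conclude by contradiction. Assuming the infimum is zero, I would take a minimizing sequence $\{v_n\}\subset H^s$ with $\|v_n\|_{H^s}=1$, $\langle v_n,Q\rangle_{H^s}=0$ and $\int Q^{2p}|v_n|^2 \to 1$, and extract a weak limit $v_n \rightharpoonup v_\ast$ in $H^s$. The quadratic form $w\mapsto \int Q^{2p}|w|^2$ is weakly continuous on $H^s$---exactly as used in the previous positivity proof for $L_{+}$, via the polynomial decay of $Q$ from Theorem~\ref{th:2.3}(iii) combined with the local compactness of $H^s \hookrightarrow L^2$---so $\int Q^{2p}|v_\ast|^2 = 1$ and in particular $v_\ast \neq 0$. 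Weak lower semicontinuity of $\|\cdot\|_{H^s}^2$ then gives $\langle L_{-}v_\ast,v_\ast\rangle \leq 0$, and the first step forces equality. Self-adjointness and nonnegativity of $L_{-}$ yield $v_\ast\in \ker L_{-} = \mathrm{span}(Q)$, and the constraint $\langle v_\ast,Q\rangle_{H^s}=0$ forces $v_\ast = 0$, a contradiction.

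The main obstacle is the kernel characterization $\ker L_{-}=\mathrm{span}(Q)$: the $L_{+}$-nondegeneracy is quoted from \cite{FLS} as Theorem~\ref{th:2.3}(iv), but the analogous statement for $L_{-}$ is not explicitly recorded in the excerpt, so it must be invoked through the positivity-improving property of the fractional Schr\"odinger semigroup. The nonnegativity in the first step is a routine but slightly delicate second-variation computation on the constraint $\mathcal{M}_{\gamma_0}$.
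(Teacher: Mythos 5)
Your proof is correct, and its skeleton --- compactness of the multiplication operator by $Q^{2p}$, a Perron--Frobenius argument identifying $\ker L_{-}=\operatorname{span}(Q)$, and a contradiction via a minimizing sequence --- coincides with the paper's. You diverge in two sub-steps, both legitimately. First, for the nonnegativity of $L_{-}$ the paper does not compute a second variation at all: it observes that $Q>0$ is an eigenfunction of $L_{-}$ with eigenvalue $0$, computes $\sigma_{\mathrm{ess}}(L_{-})=[1,+\infty)$, and invokes the proof of Lemma 8.2 of \cite{FLS} to conclude that an eigenvalue with a sign-definite eigenfunction must be the bottom of the spectrum; your mass-preserving path $\phi_t=\|Q\|_2(Q+\mathrm{i}tw)/\|Q+\mathrm{i}tw\|_2$ combined with the Nehari identity $\tfrac12\|(-\Delta)^{s/2}Q\|_2^2+\|Q\|_2^2=\int Q^{2p+2}$ reaches the same conclusion by a self-contained variational computation (and passing from real $w$ to complex $v$ is harmless, since $\langle L_{-}v,v\rangle$ splits over real and imaginary parts). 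Second, the paper normalizes its minimizing sequence in $L^2$ (after the reduction ``it suffices to bound $\langle L_{-}v,v\rangle$ from below by $\omega\|v\|_2^2$''), upgrades weak to strong $H^s$ convergence, and identifies the limit through two Lagrange multipliers $\lambda,\mu$; you normalize in $H^s$, obtain $v_\ast\neq 0$ directly from $\int Q^{2p}|v_\ast|^2=1$, and pass to $L_{-}v_\ast=0$ from $\langle L_{-}v_\ast,v_\ast\rangle=0$ together with $L_{-}\geq 0$ (via $\|L_{-}^{1/2}v_\ast\|_2=0$). Your endgame is arguably cleaner, as it avoids both the $L^2$-to-$H^s$ coercivity upgrade and the multiplier computation. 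The only point you should make explicit is that $t\mapsto\mathcal{E}(\phi_t)$ is twice differentiable so that the Taylor expansion to order $t^2$ is justified; this is covered by Proposition~\ref{regul}.
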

\begin{proof}
It suffices to prove that
\begin{equation} \label{eq:22}
\inf_{\substack{v \neq 0 \\ \langle v,Q \rangle_{H^s} =0}} \frac{\langle L_{-}v,v \rangle}{\|v\|_2^2} >0.
\end{equation}
First of all, let us recall that $\lim_{|x| \to +\infty}
Q(x)=0$. Since, as claimed in \cite[Section 3.2]{Frank},
\[
\sigma_{\mathrm{ess}}\left( \frac{1}{2}(-\Delta)^s+1 \right)=[1,+\infty)
\]
and since the multiplication operator by $Q^{2p}$ is compact, we deduce that
\[
\sigma_{\mathrm{ess}} \left( L_{-} \right)=[1,+\infty)
\]
It now follows that $L_{-}$ has a discrete spectrum over $(-\infty,1)$ which consists of eigenvalues of finite multiplicity. Of course $Q \in \ker L_{-}$, so that $0$ is an eigenvalue of $L_{-}$ and $Q$ is an associated eigenfunction. But $Q$ never changes sign, and we deduce from the proof of Lemma 8.2 in \cite{FLS} that $0$ is the smallest eigenvalue of $L_{-}$. In particular, $L_{-}$ is a non-negative operator. Once it is proved \cite{FLS} that the heat semigroup ${\mathcal H}_s(t)=\exp\{-t(-\Delta)^s\}$ is positivity preserving, namely its kernel is a positive function, standard arguments (see \cite[Section 10.5]{Teschl} or \cite[Theorems 10.32 and 10.33]{Weidmann}) show now that this eigenvalue is simple. Therefore, $\ker L_{-}=\operatorname{span}Q$. Let us set
\[
\omega = \inf \left\{\langle L_{-}v,v \rangle \mid \|v\|_2=1, \ \langle v,Q \rangle_{H^s} =0 \right\},
\]
and assume for the sake of contradiction that $\omega =0$. If $\{v_n\}_n$ is a minimizing sequence for $\omega$, it follows from the regularity properties of $Q$ that $\{v_n\}_n$ is bounded in $H^s(\mathbb{R},\mathbb{C})$, and we can assume without loss of generality that this sequence converges weakly to some $v$; as a consequence, $\langle v,Q \rangle_{H^s}=0$. Again, the compactness of the multiplication operator by $Q^{2p}$ entails
\[
0 \leq \langle L_{-}v,v\rangle \leq \liminf_{n \to +\infty} \Big( \|v_n\|_{H^s}^2 - \int Q^{2p}v_n^2 \Big) = \lim_{n \to +\infty} \langle L_{-}v_n,v_n \rangle =0,
\]
and thus $\langle L_{-}v,v \rangle =0$.
But then
\begin{align*}
\|v\|_{H^s}^2 \leq \liminf_{n \to +\infty} \|v_n\|_{H^s}^2 &\leq \limsup_{n \to +\infty} \|v_n\|_{H^s}^2 = \lim_{n \to +\infty} \Big( \langle L_{-}v_n,v_n \rangle + \int Q^{2p} v_n^2 \Big) \\
&= \langle L_{-}v,v \rangle + \int Q^{2p}v^2 \leq \|v\|_{H^s}^2.
\end{align*}
We have proved that $v_n \to v$ strongly, and that $v$ solves the minimization problem for $\omega$. Therefore, $\lambda$ and $\mu$ being two Lagrange multipliers, we have that
\begin{equation*}
\langle L_{-}v,\eta \rangle = \lambda \langle v,\eta \rangle_2 + \mu \langle Q,\eta \rangle_{H^s},
\end{equation*}
for every $\eta \in H^s(\mathbb{R},\mathbb{C})$.  Choosing $\eta=v$
yields $\lambda =0$; choosing $\eta=Q$ and recalling that $L_{-}Q=0$
yields $0=\langle v,L_{-}Q \rangle = \langle L_{-}v,Q \rangle = \mu
\|Q\|_{H^s}^2.$ Hence $\mu=0$, and we conclude that $L_{-}v=0$. Since we
know that $\ker L_{-}=\operatorname{span}Q$, for some $\theta \in
\mathbb{R}$ we must have $v=\theta Q$. But then $0=\theta \|Q\|_{H^s}^2$, a
contradiction. This shows that $\omega >0$, namely the validity of
(\ref{eq:22}).
\end{proof}
\begin{lemma}
Fix $\phi \in H^s(\mathbb{R}^N,\mathbb{C})$ such that $\|\phi\|_2 =
\|Q\|_2$ and
 \begin{equation} \label{eq:4.19}
  \inf_{\substack{x \in \mathbb{R}^N \\ \vartheta \in [0,2\pi)}} 
  \| \phi - e^{\mathrm{i}\vartheta} Q(\cdot - x) \|_{H^s} \leq \|Q\|_{H^s}.
 \end{equation}
Then
\begin{equation}\label{eq:3}
\inf\limits_{\substack{x \in \mathbb{R}^N \\ \vartheta \in [0,2\pi)}}
\| \phi - e^{\mathrm{i}\vartheta} Q(\cdot - x) \|_{H^s}^2,
\end{equation}
is achieved at some $x_0 \in \mathbb{R}^N$ and $\vartheta_0 \in
[0,2\pi)$. Moreover, writing $\phi(\cdot +x_0) e^{-\mathrm{i}\vartheta_0} = Q+W$ where $W=U+\mathrm{i}V$, we have
the relations, for $j=1,2,\ldots,N$:
\begin{equation} \label{eq:3.16}
  \left\langle U, H(Q)\frac{\partial Q}{\partial x_j} \right\rangle_2 =0\quad\text{and}\quad \langle V,Q \rangle_{H^s} =0.
\end{equation}
\end{lemma}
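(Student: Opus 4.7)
The plan is to split the proof into two independent parts: the existence of an attaining pair $(x_0, \vartheta_0)$, and then the derivation of the two orthogonality relations as the first-order conditions of that minimization.

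For existence, I would pick a minimizing sequence $(x_n, \vartheta_n)$ for the functional $g(x,\vartheta) := \|\phi - e^{\mathrm{i}\vartheta} Q(\cdot - x)\|_{H^s}^2$. Since $[0,2\pi)$ is precompact, after passing to a subsequence I may assume $\vartheta_n \to \vartheta_0$. The key step is to rule out $|x_n| \to \infty$: by translation invariance of $(-\Delta)^{s/2}$ and the fact that $Q \in H^s(\mathbb{R}^N)$ decays (Theorem \ref{th:2.3}(iii)), the translates $e^{\mathrm{i}\vartheta_n} Q(\cdot - x_n)$ converge weakly to $0$ in $H^s$, so
\[
g(x_n,\vartheta_n) = \|\phi\|_{H^s}^2 + \|Q\|_{H^s}^2 - 2\langle \phi, e^{\mathrm{i}\vartheta_n} Q(\cdot - x_n)\rangle_{H^s} \longrightarrow \|\phi\|_{H^s}^2 + \|Q\|_{H^s}^2.
\]
Since $\|\phi\|_2 = \|Q\|_2 > 0$ forces $\phi \not\equiv 0$, this limit is strictly larger than $\|Q\|_{H^s}^2$, contradicting the hypothesis \eqref{eq:4.19} that the infimum is at most $\|Q\|_{H^s}^2$. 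Hence $(x_n)$ is bounded and, passing to a further subsequence, $x_n \to x_0$. Continuity of $g$ in its two arguments (which follows from strong continuity of translations in $H^s$) produces the minimizer.

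For the orthogonality relations, I would compute $\nabla_x g$ and $\partial_\vartheta g$ at $(x_0, \vartheta_0)$. After substitution $z = y - x_0$ and using that $(-\Delta)^{s/2}$ commutes with translations and with multiplication by $e^{\mathrm{i}\vartheta_0}$, both derivatives reduce to real inner products against $\tilde\phi := e^{-\mathrm{i}\vartheta_0}\phi(\cdot + x_0) = Q + U + \mathrm{i}V$. Specifically, the $\vartheta$-condition becomes
\[
0 = \partial_\vartheta g\big|_{(x_0,\vartheta_0)} = -2\,\langle \tilde\phi,\, \mathrm{i} Q\rangle_{H^s} = -2\,\langle V, Q\rangle_{H^s},
\]
since $\langle Q + U, \mathrm{i} Q\rangle_{H^s}$ vanishes by the realness of $Q+U$ and $Q$. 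The $x_j$-condition becomes
\[
0 = \partial_{x_j} g\big|_{(x_0,\vartheta_0)} = 2\,\langle \tilde\phi,\, \partial_{y_j} Q\rangle_{H^s} = 2\,\langle U, \partial_{y_j} Q\rangle_{H^s},
\]
after noting that $\langle iV, \partial_{y_j} Q\rangle_{H^s} = 0$ and that $\langle Q, \partial_{y_j} Q\rangle_{H^s} = 0$ by integration by parts in both the $L^2$- and fractional Sobolev- pieces (using that $\partial_{y_j}$ commutes with $(-\Delta)^{s/2}$).

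To recast the spatial condition in the stated form, I would differentiate the ground-state equation \eqref{ground-eq}, obtaining the identity
\[
\tfrac12 (-\Delta)^s \partial_{y_j} Q + \partial_{y_j} Q = (2p+1) Q^{2p} \partial_{y_j} Q = H(Q)\,\partial_{y_j} Q.
\]
Testing this against $U$ gives $\langle U, \partial_{y_j} Q\rangle_{H^s} = \langle U, H(Q)\partial_{y_j} Q\rangle_2$, which together with the previous display yields the first relation in \eqref{eq:3.16}. The main conceptual obstacle is the existence part: the careful use of the hypothesis \eqref{eq:4.19} combined with $\|\phi\|_2 = \|Q\|_2$ is what prevents the minimizing sequence from losing compactness to translations at infinity; the subsequent computation of first-order conditions is essentially a smooth finite-dimensional variational argument that automatically respects the nonlocal nature of $(-\Delta)^s$ because $(-\Delta)^{s/2}$ commutes with the symmetries involved.
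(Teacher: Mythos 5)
Your proof is correct and follows essentially the same route as the paper's: it rules out escape of the minimizing translation to infinity via weak convergence of the translates of $Q$ (using the decay of $Q$, the hypothesis \eqref{eq:4.19}, and $\|\phi\|_2=\|Q\|_2$ to produce the strict gap), and then reads off the two relations in \eqref{eq:3.16} as the Euler--Lagrange conditions in $\vartheta$ and $x_j$, converting $\langle U,\partial_j Q\rangle_{H^s}=0$ into $\langle U,H(Q)\partial Q/\partial x_j\rangle_2=0$ through the differentiated ground-state equation. The only cosmetic difference is that the paper fixes $\vartheta$ by compactness of $S^1$ and computes $\lim_{|x|\to\infty}\mathfrak{n}(x)$ directly rather than arguing along a minimizing sequence.
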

\begin{proof}
  The variable $\vartheta \in [0,2\pi)$ is clearly harmless, since
  $e^{\mathrm{i}\vartheta}$ describes the compact circle $S^1 \subset
  \mathbb{C}$. We can therefore assume that $\vartheta=0$. Consider
  the auxiliary function $\mathfrak{n} \colon \mathbb{R}^N \to
  \mathbb{R}$ defined by setting $\mathfrak{n}(x) = \left\| \phi -
    Q(\cdot - x) \right\|_{H^s}^2$.  Plainly, $\mathfrak{n}$ is a continuous
  function, and
  \begin{align*}
    \mathfrak{n}(x) &= 2 \|Q\|_2^2 + \| (-\Delta)^{\frac{s}{2}}Q\|_2^2
    +
    \| (-\Delta)^{\frac{s}{2}}\phi\|_2^2\\
    & - 2\Re \int_{{\mathbb R}^N} \overline{\phi(y)}Q(y-x)\, dy - \Re
    \int \overline{(-\Delta)^{\frac{s}{2}} \phi (y)}
    (-\Delta)^{\frac{s}{2}}Q(y)\, dy
  \end{align*}
  because $\|\phi\|_2 = \|Q\|_2$. Since both $Q(\cdot -x)$ and
  $(-\Delta)^{\frac{s}{2}}Q(\cdot -x)$ decay to zero as $|x| \to
  +\infty$ (thanks to Theorem \ref{th:2.3} and using the equation
  satisfied by $Q$), we deduce that they also converge weakly to zero
  as $|x| \to +\infty$. It easily follows that
  \[
  \lim_{|x| \to +\infty} \mathfrak{n}(x) = 2\|Q\|_2^2 +
  \|(-\Delta)^{\frac{s}{2}}Q\|_2^2 + \| (-\Delta)^{\frac{s}{2}} \phi
  \|_2^2 > \|Q\|_{H^s}^2.
  \]
  On the other hand, assumption (\ref{eq:4.19}) entails that, for
  every $\delta>0$, there exists a point $x_\delta \in \mathbb{R}$
  with $\mathfrak{n}(x_\delta) \leq \|Q\|_{H^s}^2 + \delta$. As a
  consequence, the function $\mathfrak{n}$ attains its infimum on some
  ball $B(0,R)$, for a suitable $R>0$, and the proof is complete.
  Finally, we compute the Euler-Lagrange equations associated to the
  variational problem \eqref{eq:3} by differentiating with respect to
  $\theta$ and to $x_j$:
\begin{align} 
&\left\langle \phi-e^{\mathrm{i}\vartheta_0}Q(\cdot-x_0),-\mathrm{i} e^{\mathrm{i}\vartheta_0}Q(\cdot-x_0) \right\rangle_{H^s} =0 \label{eq:5}\\
&\Big\langle \phi-e^{\mathrm{i}\vartheta_0}Q(\cdot-x_0) , -e^{\mathrm{i}\vartheta_0}\frac{\partial Q}{\partial x_j}(\cdot -x_0) \Big\rangle_{H^s} =0. \label{eq:6-bis}
\end{align}
%
Equation \eqref{eq:5} yields
\begin{multline*}
\Re \int \left( \phi - e^{\mathrm{i}\vartheta_0}Q(\cdot -x_0) \right)\ \overline{-\mathrm{i} e^{\mathrm{i}\vartheta_0}Q(\cdot -x_0)} \\
{}+ \frac{1}{2}\Re \int (-\Delta)^{\frac{s}{2}} \left(\phi - e^{\mathrm{i}\vartheta_0}Q(\cdot -x_0) \right) \overline{(-\Delta)^{\frac{s}{s}} \left( -\mathrm{i}e^{\mathrm{i}\vartheta_0}Q(\cdot -x_0) \right)}=0,
\end{multline*}
namely
\[
\int Q(\cdot -x_0) \Im \left(\phi e^{-\mathrm{i}\vartheta_0} \right) + \frac{1}{2}\int (-\Delta)^{\frac{s}{2}}Q(\cdot -x_0) \Im \left( (-\Delta)^{\frac{s}{2}} \left( e^{-\mathrm{i}\vartheta_0} \phi\right) \right)=0
\]
or $\langle Q,V\rangle_{H^s}=0$.
Similarly, equation \eqref{eq:6-bis} yields 
\begin{multline*}
\Re \int \left( \phi - e^{\mathrm{i}\vartheta_0}Q(\cdot -x_0) \right)\overline{-e^{\mathrm{i}\vartheta_0}\frac{\partial Q}{\partial x_j}(\cdot -x_0) } \\
{}+ \frac{1}{2} \Re \int (-\Delta)^{\frac{s}{2}} \left( \phi - e^{\mathrm{i}\vartheta_0}Q(\cdot -x_0) \right) \overline{(-\Delta)^{\frac{s}{2}} \left( -e^{\mathrm{i}\vartheta_0} \frac{\partial Q}{\partial x_j}(\cdot -x_0) \right)}=0,
\end{multline*}
or
\begin{equation*}
\int U \frac{\partial Q}{\partial x_j} + \int Q\frac{\partial Q}{\partial x_j} 
{}+ \frac{1}{2} \int (-\Delta)^{\frac{s}{2}} U \ (-\Delta)^{\frac{s}{2}} \frac{\partial Q}{\partial x_j} + \frac{1}{2}\int (-\Delta)^{\frac{s}{2}}Q \ (-\Delta)^{\frac{s}{2}} \frac{\partial Q}{\partial x_j} =0.
\end{equation*}
Since
\[
\int Q\frac{\partial Q}{\partial x_j} =0= \int  (-\Delta)^{\frac{s}{2}}Q \ (-\Delta)^{\frac{s}{2}} \frac{\partial Q}{\partial x_j},
\]
and using the fact that
\[
\frac{1}{2}(-\Delta)^{s} \frac{\partial Q}{\partial x_j} + \frac{\partial Q}{\partial x_j} = H(Q)\frac{\partial Q}{\partial x_j},
\]
we finally deduce
$\left\langle U , H(Q)\frac{\partial Q}{\partial x_j} \right\rangle_2=0.$
\end{proof}

\begin{lemma} \label{lem:3.7}
If $p\in (0,1)$, there exists a constant $C>0$ such that
\begin{equation*} 
\left| |z|^{p-1}z-|w|^{p-1}w \right| \leq C |z-p|^p,
\,\,\quad
\text{for every $z$, $w \in \mathbb{C}$.}
\end{equation*}
\end{lemma}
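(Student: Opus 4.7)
The natural approach is to exploit the scaling $\phi(\lambda z) = \lambda^p \phi(z)$ ($\lambda>0$) satisfied by the map $\phi(z) := |z|^{p-1}z$, extended continuously at the origin by $\phi(0):=0$ (legitimate because $p>0$). Since the inequality is trivial when $z=w$, I define
\[
F(z,w) := \frac{|\phi(z)-\phi(w)|}{|z-w|^p},\quad z \neq w,
\]
and observe that $F$ is invariant under the simultaneous dilation $(z,w) \mapsto (\lambda z, \lambda w)$. Hence it suffices to bound $F$ on the compact set $K := \{(z,w) \in \mathbb{C}^2 : \max(|z|,|w|) = 1\}$.

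The function $F$ is manifestly continuous on $K \setminus \Delta$, where $\Delta := \{(w,w) : |w|=1\}$ is the diagonal of $K$, so the proof reduces to extending $F$ continuously to $\Delta$. At any point $(w_0,w_0) \in \Delta$ we have $w_0 \neq 0$, so $\phi$ is of class $C^1$ on a small ball around $w_0$ that avoids the origin. The mean value theorem applied on the segment from $w$ to $z$ (which stays uniformly away from $0$ once $(z,w)$ is close enough to $(w_0,w_0)$) yields $|\phi(z)-\phi(w)| \leq C(w_0)|z-w|$, so
\[
F(z,w) \leq C(w_0)\,|z-w|^{1-p} \longrightarrow 0 \qquad \text{as } (z,w) \to (w_0,w_0),
\]
since $p<1$. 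Setting $F \equiv 0$ on $\Delta$ therefore gives a continuous function on the compact set $K$, which is consequently bounded by some constant $C>0$; undoing the scaling gives the desired inequality for all $z,w \in \mathbb{C}$.

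The step I expect to be the most delicate is the extension to the diagonal, but it is entirely routine once one notes the uniform lower bound $|(1-t)w + tz| \geq \tfrac{1}{2}$ for $t \in [0,1]$ and $(z,w)$ sufficiently close to $(w_0,w_0)$ with $|w_0|=1$. An elementary alternative avoids compactness altogether: reducing to $|z|\geq |w|$, one distinguishes the case $|w| \leq |z-w|$, in which $||z|^{p-1}z-|w|^{p-1}w| \leq |z|^p + |w|^p \leq (2^p+1)|z-w|^p$, from the case $|w|>|z-w|$, in which one writes $z=re^{\mathrm{i}\alpha}$, $w=se^{\mathrm{i}\beta}$ and splits
\[
|r^p e^{\mathrm{i}\alpha}-s^p e^{\mathrm{i}\beta}| \leq r^p\,|e^{\mathrm{i}\alpha}-e^{\mathrm{i}\beta}| + |r^p-s^p|,
\]
then bounds $|r^p-s^p|\leq|r-s|^p\leq|z-w|^p$ and exploits the identity $|z-w|^2=(r-s)^2+4rs\sin^2((\alpha-\beta)/2)$ together with the bounds $r<2s$ and $r>|z-w|$ to control the angular contribution.
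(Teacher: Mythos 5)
Your proof is correct, and it rests on the same underlying idea as the paper's: the $p$-homogeneity of $\phi(z)=|z|^{p-1}z$ reduces the claimed bound (note the statement's $|z-p|^p$ is a typo for $|z-w|^p$; both you and the paper read it that way) to the finiteness of a scale-invariant supremum. The difference is in the execution. The paper normalizes by $|w|$ and writes the quotient as
\[
\sup_{t\in[1,\infty),\ \vartheta\in[0,2\pi)}\frac{(t^{2p}+1-2t^p\cos\vartheta)^{1/2}}{\left(t^2+1-2t\cos\vartheta\right)^{p/2}},
\]
asserting without further comment that this is finite --- a claim that requires checking both the noncompact end $t\to\infty$ and the singular corner $(t,\vartheta)\to(1,0)$, where numerator and denominator vanish simultaneously. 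Your first argument supplies precisely the missing analysis: you normalize instead onto the genuinely compact set $\max(|z|,|w|)=1$, on which the only degenerate points are diagonal points $(w_0,w_0)$ with $|w_0|=1$, and there the mean value theorem (legitimate since $\phi$ is $C^1$ away from the origin, with the segment from $w$ to $z$ staying away from $0$) gives $F(z,w)\le C(w_0)\,|z-w|^{1-p}\to 0$, so $F$ extends continuously to the diagonal and is bounded by compactness. Your elementary alternative --- splitting on whether $|w|\le|z-w|$ and, in the opposite case, separating the radial and angular contributions via $|r^p-s^p|\le|r-s|^p$ and $4rs\sin^2\tfrac{\alpha-\beta}{2}\le|z-w|^2$ together with $r<2s$ and $|z-w|<r$ --- is a genuinely different, fully quantitative route that yields an explicit constant and avoids compactness altogether. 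Either argument is a complete proof, and both are more detailed than what the paper records.
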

\begin{proof}
Let $z,w\in\mathbb{C}$ be given and 
let $\vartheta\in [0,2\pi)$ be the angle between them. Without
loss of generality, we may assume that $t=|z|/|w|>1$. Since we have
\begin{equation*}
\frac{\left| |z|^{p-1}z-|w|^{p-1}w \right|}{|z-w|^p} \leq
\sup_{ \substack{t \in [1,\infty)\\ \vartheta \in [0,2\pi)}}\frac{(t^{2p}+1-2 t^p\cos \vartheta)^{1/2}}{\left( t^2+1 - 2 t\cos \vartheta  \right)^{p/2}}<+\infty,
\end{equation*}
the assertion follows.
\end{proof}

\begin{proposition}
\label{regul}
Let $\Psi(u)=\int |u|^{2p+2}$. Then $\Psi$ is of class 
$C^2$ on $H^s(\R^N,\C)$ for $0<p<\frac{2s}{N}$.
\end{proposition}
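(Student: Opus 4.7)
The plan is to represent $\Psi$ as $\Psi(u) = \int F(u)$, where $F(z) := |z|^{2p+2}$ is viewed as a function on $\mathbb{C} \simeq \mathbb{R}^2$. First I would verify that $F$ is of class $C^2$ on all of $\mathbb{C}$: this is routine away from the origin, and at the origin the pointwise bounds $|F(z)| \leq |z|^{2p+2}$, $|DF(z)| \leq C|z|^{2p+1}$, $|D^2F(z)| \leq C|z|^{2p}$ all tend to $0$ since $p>0$, so both derivatives extend continuously by zero at $z=0$. A direct computation gives
\[
DF(u)[v] = (2p+2)|u|^{2p}(u\cdot v), \qquad D^2 F(u)[v,w] = (2p+2)|u|^{2p}(v\cdot w) + 4p(p+1)|u|^{2p-2}(u\cdot v)(u\cdot w),
\]
with the convention that the last term vanishes at $u=0$.

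The second step is to show that the candidate derivatives $\Psi'(u)[v] = \int DF(u)[v]$ and $\Psi''(u)[v,w] = \int D^2 F(u)[v,w]$ give a continuous linear form and a continuous bilinear form on $H^s$. Since $0 < p < 2s/N$ lies strictly below the Sobolev critical exponent, the embedding $H^s(\mathbb{R}^N,\mathbb{C}) \hookrightarrow L^{2p+2}(\mathbb{R}^N,\mathbb{C})$ is available, and H\"older's inequality with exponents $((2p+2)/(2p+1), 2p+2)$, respectively $((2p+2)/(2p), 2p+2, 2p+2)$, yields $|\Psi'(u)[v]| \leq C\|u\|_{H^s}^{2p+1}\|v\|_{H^s}$ and $|\Psi''(u)[v,w]| \leq C\|u\|_{H^s}^{2p}\|v\|_{H^s}\|w\|_{H^s}$.

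The hard part, and the main obstacle, is verifying continuity of $u \mapsto \Psi'(u)$ and $u \mapsto \Psi''(u)$. For $\Psi'$ I would use that $g(z) := |z|^{2p}z$ is $C^1$ on $\mathbb{C}$ with $|Dg(z)| \leq C|z|^{2p}$; the mean value theorem gives the pointwise bound $|g(z)-g(w)| \leq C(|z|^{2p}+|w|^{2p})|z-w|$, from which H\"older together with Sobolev embedding delivers $\|\Psi'(u) - \Psi'(u_n)\|_{(H^s)^*} \to 0$ whenever $u_n \to u$ in $H^s$. For $\Psi''$ the analysis splits on the value of $p$: when $p \geq 1$ the Hessian $D^2 F$ is locally Lipschitz and the same strategy applies; when $0 < p < 1$ the term $|u|^{2p-2}(u\cdot v)(u\cdot w)$ is only H\"older-continuous in $u$, and I would combine the sub-additivity estimate $\bigl| |u|^{2p} - |u_n|^{2p} \bigr| \leq C|u-u_n|^{2p}$ (a close relative of Lemma~\ref{lem:3.7}) with extraction of an a.e.\ convergent subsequence from $u_n \to u$ in $L^{2p+2}$ and the dominated convergence theorem applied with dominant $C(|u|^{2p}+|u_n|^{2p})|v||w|$. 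The delicacy of this last step, rather than any conceptual issue, is where essentially all of the technical work is concentrated.
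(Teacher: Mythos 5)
Your overall strategy coincides with the paper's: differentiate the integrand $F(z)=|z|^{2p+2}$ pointwise, obtain boundedness of the candidate first and second derivatives from H\"older's inequality and the subcritical Sobolev embedding, and then prove continuity of $u\mapsto \Psi''(u)$ using H\"older-type pointwise estimates on the integrand (your bound $\bigl||u|^{2p}-|u_n|^{2p}\bigr|\leq C|u-u_n|^{2p}$ and the estimate of Lemma~\ref{lem:3.7} are exactly the paper's tools, and the split of $D^2F$ into the two pieces $|u|^{2p}(v\cdot w)$ and $|u|^{2p-2}(u\cdot v)(u\cdot w)$ matches the paper's decomposition $\Psi_1''+\Psi_2''$). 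The paper additionally shortens the work by noting that a symmetric bounded bilinear form on a real Hilbert space has norm equal to that of its quadratic form, so it suffices to control $\Psi''(u_n)(h,h)-\Psi''(u)(h,h)$ uniformly over $\|h\|_{H^s}\leq 1$.

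There is, however, a genuine gap in your treatment of the case $0<p<1$ for the second piece. Continuity of $\Psi''$ in the sense required for $C^2$ regularity means convergence of $\Psi''(u_n)$ to $\Psi''(u)$ \emph{in the norm of bilinear forms}, i.e.
\[
\sup_{\|v\|_{H^s},\,\|w\|_{H^s}\leq 1}\bigl|\Psi''(u_n)[v,w]-\Psi''(u)[v,w]\bigr|\longrightarrow 0 .
\]
Your proposed argument --- extract an a.e.\ convergent subsequence and apply dominated convergence with the dominant $C(|u|^{2p}+|u_n|^{2p})|v||w|$ --- is carried out for \emph{fixed} $v,w$, and therefore only yields $\Psi''(u_n)[v,w]\to\Psi''(u)[v,w]$ pointwise in $(v,w)$; the null set and the rate of convergence both depend on the chosen pair, so the supremum over the unit ball does not follow. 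The repair is to stay quantitative: from Lemma~\ref{lem:3.7} one gets the pointwise bound
\[
\bigl|\,|u_n|^{2p-2}(\Re(u_n\overline h))^2-|u|^{2p-2}(\Re(u\overline h))^2\,\bigr|
\leq C\max\{|u_n|^{p},|u|^{p}\}\,|u_n-u|^{p}\,|h|^2 ,
\]
and then a three-exponent H\"older inequality, with all exponents kept in the range covered by the embedding $H^s\hookrightarrow L^q$ for $2\leq q\leq \frac{2N}{N-2s}$ (this is where $p<\frac{2s}{N}$ enters), produces an estimate of the form $C\bigl(\|u_n\|_{H^s}^{p}+\|u\|_{H^s}^{p}\bigr)\|u_n-u\|_{\frac{2N}{N-2s}}^{p}\|h\|_{H^s}^2$, which is uniform over the unit ball and tends to zero. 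This is exactly how the paper concludes (for $\Psi_1''$ it uses the explicit admissible pair $q=\frac{N}{p(N-2s)}$, $r=\frac{N}{2ps+(1-p)N}$). The rest of your proposal, including the $p\geq 1$ case via local Lipschitz continuity of $D^2F$, is sound.
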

\begin{proof}
Since $\Psi''$ is a symmetric bilinear form on the \emph{real} Hilbert space $H^s(\mathbb{R}^N,\mathbb{C})$, its norm as a bilinear form equals the norm of its associated quadratic form, see for example \cite[Lemma 2.1, pag. 173]{Edmunds}; therefore we prove that
\[
\lim_{v \to u} \sup_{h \neq 0} \frac{\Psi''(u)(h,h)-\Psi''(v)(h,h)}{\|h\|_{H^s}^2}=0.
\]
From (\ref{eq:3.1}) we know that $\Psi''(u)$ splits into two terms (we drop some multiplicative constants),
\[
\Psi_1''(u)(h,h):=\int |u|^{2p}h \overline{h} 
\quad\text{and}\quad \Psi_2''(u)(h,h):=\int |u|^{2p-2} (\Re (u \overline{h}))^2,
\,\,\quad h\in H^s(\R^N,\C),
\]
which we shall treat separately.
Let $\{u_n\}_n\subset H^s(\R^N,\C)$ be such that $u_n\to u$ strongly as $n\to\infty$. 
Then, in the case $2p\leq 1$,
by the H\"olderianity of the map $s\mapsto s^{2p}$ we obtain that
\begin{equation*}
|\Psi_1''(u_n)(h,h)-\Psi_1''(u)(h,h)|\leq C\int ||u_n|^{2p}-|u|^{2p}||h|^2
\leq C\int |u_n-u|^{2p}|h|^2.
\end{equation*}
By applying the H\"older inequality with admissible exponents $(q,r)$ respectively, 
\[
q:=\frac{N}{p(N-2s)}>1, 
\,\,\quad r:=\frac{N}{2ps+(1-p)N}\in \Big(1,\frac{N}{N-2s}\Big),
\]
it follows for every $h\in H^s(\R^N,\C)$ with $\|h\|_{H^s}\leq 1$
\begin{equation*}
|\Psi''_1(u_n)(h,h)-\Psi''_1(u)(h,h)|\leq C\|u_n-u\|_{\frac{2N}{N-2s}}^{2p}\|h\|_{2r}^2
\leq C\|u_n-u\|_{\frac{2N}{N-2s}}^{2p},
\end{equation*}
since $\|h\|_{2r}\leq C\|h\|_{H^s}\leq C$, concluding the proof for $\Psi_1''$.
The opposite case $2p>1$ can be treated similarly.
Let us now come to the treatment of $\Psi_2''$. We notice that, for $p<1$, we get
\begin{multline*}
\left| |u_n|^{2p-2} (\Re (u_n \overline{h}))^2 - |u|^{2p-2} (\Re (u \overline{h}))^2 \right| \\
\leq 2 |h| \max \left\{ |u_n|^{p},|u|^{p} \right\} \left| |u_n|^{p-1} \Re (u_n \overline{h}) - |u|^{p-1} \Re (u \overline{h}) 
\right| \\
\leq C \max \left\{ |u_n|^{p},|u|^{p} \right\} |u_n-u|^{p} |h|^2,
\end{multline*}
where we used Lemma \ref{lem:3.7}. 
Now we can proceed as before and conclude the proof.
\end{proof}

\subsection{Proof of Theorem \ref{thm:enconv}}

We consider the action $I(\phi) = \frac{1}{2}\mathscr{E}(\phi)+\frac{1}{2}\|\phi\|_2^2$
and we control the norm of $w$ in terms of the difference
$I(\phi) - I(Q)$.
Using the scale invariance of $I$, 
recalling that $\langle I'(Q),w \rangle = 0$,
the orthogonality conditions \eqref{eq:3.16},
Propositions \ref{stima+} and \ref{stima-}, and taking into account
Proposition~\ref{regul}, by virtue of Taylor formula, we have
\begin{align*}
I(\phi) - I(Q) &=  I(Q + w) - I(Q) = \langle I'(Q),w \rangle + \frac{1}{2}\langle I''(Q)w,w \rangle
+ o(\|w\|_{H^s}^2)  \\
&= \langle L_+ u,u \rangle + \langle L_- v, v \rangle + o(\|w\|_{H^s}^2) \\
\noalign{\vskip2pt}
&\geq C\|u\|_{H^s}^2+C\|v\|_{H^s}^2+ o(\|w\|_{H^s}^2)=C\|w\|_{H^s}^2+o(\|w\|_{H^s}^2).
\end{align*}
To complete the proof of Theorem \ref{thm:enconv}, we observe that
for every $\eps>0$ there exists $\delta>0$ such that, if $\phi\in H^s(\mathbb{R}^N,\mathbb{C}),$
$\|\phi\|_{2}=\|Q\|_2$ and ${\mathcal E}(\phi)-{\mathcal E}(Q)<\delta$, then
\[
\inf\limits_{x\in\mathbb{R}^N,\,\vartheta\in [0,2\pi)} \| \phi -
e^{\mathrm{i}\vartheta} Q(\cdot - x) \|_{H^s}<\eps.
\]
Then, choosing ${\mathcal E}(\phi)-{\mathcal E}(Q)$ small enough, Theorem \ref{thm:enconv} follows.
By the uniqueness of solutions to $\min\{{\mathcal E}(q): q\in H^s(\mathbb{R}^N,\mathbb{C}),
\,\|q\|_2=\|Q\|_2\}$ (see Corollary \ref{GS-char-II}) the above implication follows by 
Lions' concentration compactness principle as in \cite{CaLi}.
\qed

\section{Dynamics of the ground state}
\label{preliminary}

\noindent
We first recall the following (cf.\ \cite[Lemma 2.4]{vald-1s}).

\begin{lemma}\label{lemma_fall}
Let $s$, $\bar{\sigma} \in (0,1]$ and $\delta > 2 |\bar{\sigma}-s|$. Then, for any $\varphi \in H^{2(\bar{\sigma}+\delta)}(\mathbb{R}^N)$,
\[
\left\| (-\Delta)^{\bar{\sigma}}\varphi - (-\Delta)^s \varphi \right\|_2 \leq C(\bar{\sigma},\delta) |\bar{\sigma}-s| \left\| \varphi \right\|_{H^{2(\bar{\sigma}+\delta)}},
\]
for a suitable $C(\bar{\sigma},\delta)>0$ of the form $C(\bar{\sigma},\delta)=
\frac{C_1}{\bar\sigma}+\frac{C_2}{\delta}$ with $C_1,C_2$ independent of $\bar\sigma,\delta.$
\end{lemma}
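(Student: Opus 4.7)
The proof is naturally Fourier-analytic. By Plancherel's identity,
\[
\bigl\|(-\Delta)^{\bar{\sigma}}\varphi - (-\Delta)^s\varphi\bigr\|_2^2 = \int \bigl||\xi|^{2\bar{\sigma}} - |\xi|^{2s}\bigr|^2\, |\mathcal{F}\varphi(\xi)|^2\, d\xi,
\]
so the whole task reduces to a pointwise estimate of the Fourier multiplier $m(\xi) := |\xi|^{2\bar{\sigma}} - |\xi|^{2s}$. Applying the mean value theorem to $t\mapsto |\xi|^{2t}$ for fixed $\xi\neq 0$ yields
\[
m(\xi) = 2(\bar{\sigma}-s)\,|\xi|^{2\sigma^*}\log|\xi|
\]
for some exponent $\sigma^*$ lying between $s$ and $\bar{\sigma}$. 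The hypothesis $\delta>2|\bar{\sigma}-s|$ is used in the form $|\sigma^*-\bar{\sigma}|<\delta/2$, which is the key alignment of the exponents that makes both regimes below close.

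I would split $\mathbb{R}^N$ into the low-frequency set $\{|\xi|\leq 1\}$ and the high-frequency set $\{|\xi|>1\}$, in order to separate the logarithmic singularity at the origin from the logarithmic growth at infinity. On $\{|\xi|\leq 1\}$, one has the elementary identity $\sup_{0<r\leq 1} r^{2\alpha}|\log r| = (2e\alpha)^{-1}$; since $\sigma^*$ is comparable to $\bar{\sigma}$ via $\sigma^*\geq \bar{\sigma}-\delta/2$, a short case analysis (distinguishing whether $\delta\leq \bar{\sigma}$ or not) yields the estimate
\[
|m(\xi)|\leq \frac{C_1\,|\bar{\sigma}-s|}{\bar{\sigma}},\qquad |\xi|\leq 1.
\]
On $\{|\xi|>1\}$, the classical inequality $\log r \leq r^{\delta}/\delta$ (valid for $r\geq 1$) gives
\[
|\xi|^{2\sigma^*}\log|\xi| \leq \frac{|\xi|^{2\sigma^*+\delta}}{\delta}\leq \frac{|\xi|^{2(\bar{\sigma}+\delta)}}{\delta},
\]
where the final inequality exploits $2\sigma^*+\delta \leq 2(\bar{\sigma}+\delta/2)+\delta = 2(\bar{\sigma}+\delta)$; hence
\[
|m(\xi)|\leq \frac{C_2\,|\bar{\sigma}-s|}{\delta}\,|\xi|^{2(\bar{\sigma}+\delta)},\qquad |\xi|>1.
\]

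Combining these two bounds produces $|m(\xi)|^2 \leq C(\bar{\sigma},\delta)^2\,|\bar{\sigma}-s|^2 \bigl(1+|\xi|^{4(\bar{\sigma}+\delta)}\bigr)$ with $C(\bar{\sigma},\delta)=C_1/\bar{\sigma}+C_2/\delta$, and integrating against $|\mathcal{F}\varphi(\xi)|^2$ recovers precisely $\|\varphi\|_{H^{2(\bar{\sigma}+\delta)}}^2$ on the right-hand side, finishing the proof. The main technical nuisance is that the logarithm prevents any direct polynomial comparison of $|\xi|^{2\bar{\sigma}}$ and $|\xi|^{2s}$, and it must be tamed by \emph{two different} devices at the two frequency scales; the precise hypothesis $\delta>2|\bar{\sigma}-s|$ is exactly what is needed so that the mean-value exponent $\sigma^*$ is close enough to $\bar{\sigma}$ for the buffer $\delta$ to simultaneously absorb the log near zero (through the factor $1/\bar{\sigma}$) and at infinity (through the factor $|\xi|^{2(\bar{\sigma}+\delta)}/\delta$).
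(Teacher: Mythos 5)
The paper does not actually prove this lemma: it is quoted verbatim from Fall--Valdinoci \cite[Lemma 2.4]{vald-1s}, so there is no in-paper argument to compare against. Your Fourier-multiplier proof is the natural one (and is essentially the argument in the cited reference): Plancherel reduces everything to the pointwise bound on $m(\xi)=|\xi|^{2\bar\sigma}-|\xi|^{2s}$, the mean value theorem produces the factor $|\bar\sigma-s|$ together with a logarithm, and the splitting at $|\xi|=1$ with the two elementary bounds $\sup_{0<r\le 1}r^{2\alpha}|\log r|=(2e\alpha)^{-1}$ and $\log r\le r^{\delta}/\delta$ correctly generates the two pieces $C_1/\bar\sigma$ and $C_2/\delta$ of the constant. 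The high-frequency half is complete as written, since $2\sigma^*+\delta\le 2(\bar\sigma+\delta)$ follows from $|\sigma^*-\bar\sigma|<\delta/2$ and $|\xi|>1$.

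The one step you should make explicit is the low-frequency case analysis, because the dichotomy you name ($\delta\le\bar\sigma$ or not) does not by itself close the argument. The mean-value exponent satisfies $\sigma^*\ge\min(s,\bar\sigma)$, and when $s\ll\bar\sigma$ (which is permitted: take $\bar\sigma=1$, $s$ near $0$, $\delta$ slightly larger than $2$) the bound $(2e\sigma^*)^{-1}$ blows up like $1/s$ and is \emph{not} controlled by $C/\bar\sigma$; the intermediate point of the mean value theorem is simply the wrong tool on a long interval. The repair is one line: split instead on whether $|\bar\sigma-s|\le\bar\sigma/2$. If so, then $\sigma^*\ge\bar\sigma/2$ and your $(2e\sigma^*)^{-1}\le(e\bar\sigma)^{-1}$ bound applies; if not, discard the mean value theorem and use the trivial estimate $|m(\xi)|\le|\xi|^{2\bar\sigma}+|\xi|^{2s}\le 2\le 4\,|\bar\sigma-s|/\bar\sigma$ on $\{|\xi|\le 1\}$. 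With that substitution the proof is complete and yields exactly the stated form of $C(\bar\sigma,\delta)$.
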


\noindent
Let now $u^\eps$ be a solution of the Cauchy problem \eqref{eq:CP}. 
The energy is defined as
\[
E_\eps(t)=\frac{1}{2\eps^{N-2s}}\int |(-\Delta)^{\frac{s}{2}} u^\eps(t,x)|^2+\frac{1}{\eps^N}\int V(x)|u^\eps(t,x)|^2-
\frac{1}{(p+1)\eps^N}\int |u^\varepsilon(t,x)|^{2p+2},
\]
and $E_\eps(t)=E_\eps(0)$ for every $t\geq 0$.
Moreover the mass conservation reads as
\[
\frac{1}{\eps^N} \int |u^\eps(t,x)|^2  
=\|Q\|_2^2=:m,
\qquad \text{$t\geq 0$,\,\, $\eps>0$.}
\] 
\noindent
Let us set
\begin{equation*}
\J_s:=-C(N,s)\iint \frac{Q\left( x \right)(Q\left( x \right)-Q\left( x-z \right))
(1-\cos \langle z , v_0 \rangle )}{|z|^{N+2s}} dx  dz,
\end{equation*}
and define
\[
\mathcal{H}(t) := \frac{1}{2}m |v(t)|^{2s} + m V(x(t)),\quad t\geq 0.
\]
Then we have the following
\begin{lemma}
\label{espansione-energia-mod}
For $t\in [0,\infty)$ and $\eps>0$ we have
\[
E_\eps(t)= {\mathcal E}(Q)+{\mathcal H}(t)+{\mathcal O}(\eps^2)+\frac{1}{2}\J_s.
\]
Moreover, $\J_s={\mathcal O}(1-s)$.
\end{lemma}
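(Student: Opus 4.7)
The plan is to use two conservation laws to reduce everything to time $t=0$. The fractional Schr\"odinger flow preserves $E_\eps$, so $E_\eps(t)=E_\eps(0)$; testing \eqref{sysdin-s-intro} by $\dot x$ and using $s|\dot x|^{2s-2}\dot x\cdot\ddot x=\tfrac12\frac{d}{dt}|\dot x|^{2s}$ one gets $\frac{d}{dt}\mathcal H\equiv 0$, so $\mathcal H(t)=\mathcal H(0)=\tfrac12 m|v_0|^{2s}+mV(x_0)$. It is therefore enough to establish
\[
E_\eps(0)=\mathcal E(Q)+\tfrac12 m|v_0|^{2s}+mV(x_0)+\tfrac12\J_s+{\mathcal O}(\eps^2),
\]
which I obtain by computing the three summands of $E_\eps(0)$ separately.

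For the potential term, the substitution $x=x_0+\eps\xi$ reduces it to $\int V(x_0+\eps\xi)Q(\xi)^2\,d\xi$; a second order Taylor expansion of $V$, combined with the radial symmetry of $Q$ (Theorem \ref{th:2.3} (ii)) to kill the linear term and with the decay bounds in (iii) to control the remainder, yields $mV(x_0)+{\mathcal O}(\eps^2)$. The nonlinear term rescales exactly to $\tfrac{1}{p+1}\|Q\|_{2p+2}^{2p+2}$. The fractional kinetic term I treat through the singular-integral representation of $\|(-\Delta)^{s/2}\cdot\|_2^2$ together with the elementary identity
\[
|ae^{\mathrm i\alpha}-be^{\mathrm i\beta}|^2=(a-b)^2+2ab\bigl(1-\cos(\alpha-\beta)\bigr),\qquad a,b\in\mathbb{R},
\]
applied to $u^\eps(0,x)=Q((x-x_0)/\eps)e^{\mathrm i\langle x,v_0\rangle/\eps}$. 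The diagonal piece $(a-b)^2$, after $x=x_0+\eps\xi$, $y=x_0+\eps\eta$, reproduces exactly $\eps^{N-2s}\|(-\Delta)^{s/2}Q\|_2^2$, and hence contributes the gradient part of $\mathcal E(Q)$ once divided by $2\eps^{N-2s}$.

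For the phase piece, the substitution $x=x_0+\eps\xi$, $z=(x-y)/\eps$ produces $\eps^{N-2s}C(N,s)\iint Q(\xi)Q(\xi-z)(1-\cos\langle z,v_0\rangle)|z|^{-N-2s}\,d\xi\,dz$; writing $Q(\xi)Q(\xi-z)=Q(\xi)^2-Q(\xi)(Q(\xi)-Q(\xi-z))$ splits this, after division by $2\eps^{N-2s}$, into the isotropic piece
\[
\tfrac{1}{2}C(N,s)\|Q\|_2^2\int\frac{1-\cos\langle z,v_0\rangle}{|z|^{N+2s}}\,dz=\tfrac{1}{2}m|v_0|^{2s},
\]
by rotation and scaling together with the very definition of $C(N,s)$, plus exactly $\tfrac{1}{2}\J_s$. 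Summing the three contributions yields the asserted expansion of $E_\eps(0)$.

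Finally, the bound $\J_s={\mathcal O}(1-s)$ follows from $C(N,s)={\mathcal O}(1-s)$ as $s\to 1^-$ (a consequence of $\lim_{s\to 1^-}C(N,s)/(s(1-s))\in(0,\infty)$ recalled in the introduction), combined with a uniform-in-$s$ bound on the remaining double integral. One splits $z$ into $\{|z|\le 1\}$ and $\{|z|>1\}$ and invokes the two-sided estimates $|Q(\xi)-Q(\xi-z)|\le C\min(|z|,Q(\xi)+Q(\xi-z))$ (from Theorem \ref{th:2.3} (iii)) and $1-\cos\langle z,v_0\rangle\le C|v_0|^2\min(|z|^2,1)$: the first region contributes an integrand dominated by $CQ(\xi)|z|^{3-N-2s}$ which is integrable up to $s<3/2$, and on the second region the integrand is dominated by $CQ(\xi)(Q(\xi)+Q(\xi-z))|z|^{-N-2s}$, integrable by the $L^1\cap L^\infty$ decay of $Q$. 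The main technical nuisance throughout is keeping every error uniform in $s\in(s_0,1)$; this is precisely what the quantitative decay of $Q$ in Theorem \ref{th:2.3} (iii), together with the sharp rate $C(N,s)\sim 1-s$, is tailored to deliver.
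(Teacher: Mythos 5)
Your argument is correct, and for the main expansion it coincides with the paper's: both reduce to $t=0$ by conservation of $E_\eps$ and of $\mathcal H$, Taylor-expand the potential term using the radial symmetry of $Q$, and split the rescaled Gagliardo seminorm of $Q(\cdot)e^{\mathrm i\langle\cdot,v_0\rangle}$ into the seminorm of $Q$, the isotropic piece evaluated via $\int(1-\cos\langle z,v_0\rangle)|z|^{-N-2s}dz=|v_0|^{2s}/C(N,s)$, and the cross term $\J_s$ (you use the polar identity $|ae^{\mathrm i\alpha}-be^{\mathrm i\beta}|^2=(a-b)^2+2ab(1-\cos(\alpha-\beta))$ where the paper adds and subtracts $Q(x)e^{\mathrm i\langle y,v_0\rangle}$; these are the same computation). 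Where you genuinely diverge is the claim $\J_s={\mathcal O}(1-s)$. The paper proves it \emph{indirectly}: it compares the resulting identity $\|(-\Delta)^{s/2}(Qe^{\mathrm i\langle\cdot,v_0\rangle})\|_2^2=\|(-\Delta)^{s/2}Q\|_2^2+|v_0|^{2s}\|Q\|_2^2+\J_s$ with its exact local counterpart at $s=1$, and uses Lemma \ref{lemma_fall} (requiring $Q\in H^{2+}$, available from Theorem \ref{th:2.3}(iii)) to show each fractional norm differs from its local limit by ${\mathcal O}((1-s)^2)$, so that $\J_s$ is forced to be ${\mathcal O}(1-s)$ by subtraction. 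You instead estimate $\J_s$ \emph{directly}, as $C(N,s)\sim(1-s)$ times a double integral bounded uniformly in $s$ near $1$. Your route is more self-contained and gives information for all $s$ bounded away from $0$, not only near $s=1$; the paper's route avoids any pointwise near-diagonal analysis at the cost of invoking the $H^{2(\bar\sigma+\delta)}$ hypothesis of Lemma \ref{lemma_fall}. One caveat on your version: the bound $|Q(\xi)-Q(\xi-z)|\leq C\min(|z|,Q(\xi)+Q(\xi-z))$ does \emph{not} follow from the decay estimate in Theorem \ref{th:2.3}(iii), which controls $|Q|$ and $|x\cdot\nabla Q|$ but not the modulus of continuity; you need $\nabla Q\in L^\infty$ (or at least a uniform H\"older bound, which already suffices since any positive exponent keeps $\int_0^1 r^{1+\alpha-2s}\,dr$ bounded as $s\to1^-$). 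This is supplied by $Q\in H^{2s+1}$ together with Sobolev embedding when $2s+1>N/2+1$, and in general by the regularity theory of \cite{FLS}, but it should be invoked explicitly: with only the trivial bound $|Q(\xi)-Q(\xi-z)|\leq Q(\xi)+Q(\xi-z)$ the near-diagonal integral produces a factor $(1-s)^{-1}$ that exactly cancels $C(N,s)$, and the conclusion would be lost.
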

\begin{proof}
Assuming $x_0=0$ for simplicity, we observe that
\[
\frac{1}{\varepsilon^{N-2s}}\iint \frac{\big| Q\left( \frac{x}{\varepsilon} \right) e^{\frac{\mathrm{i}}{\varepsilon} \langle x , v_0 \rangle} - Q\left( \frac{y}{\varepsilon} \right) e^{\frac{\mathrm{i}}{\varepsilon} \langle y,v_0 \rangle} \big|^2}{|x-y|^{N+2s}} dx  dy 
= \iint \frac{\big| Q\left( x \right) e^{\mathrm{i} \langle x,v_0 \rangle} - Q\left( y \right) e^{\mathrm{i} \langle y,v_0 \rangle} \big|^2}{|x-y|^{N+2s}} 
dx  dy.
\]
Recalling the identity \cite[formula (3.12)]{DiNezza}
\begin{equation}
\label{gr8}
\int \frac{1-\cos \langle z , v_0 \rangle}{|z|^{N+2s}}dz = \frac{|v_0|^{2s}}{C(N,s)},
\end{equation}
we obtain, on account of \cite[Proposition 3.4]{DiNezza}, the following conclusion
\begin{align*}
& \iint \frac{\left| Q\left( x \right) e^{\mathrm{i} \langle x,v_0 \rangle} - Q\left( y \right) e^{\mathrm{i} \langle y,v_0 \rangle} 
\right|^2}{|x-y|^{N+2s}} dx  dy \\
&= \iint \frac{\left| Q\left( x \right) e^{\mathrm{i} \langle x,v_0 \rangle} -Q(x) e^{\mathrm{i} \langle y,v_0 \rangle} +Q(x) e^{\mathrm{i} \langle y,v_0 \rangle} - Q\left( y \right) e^{\mathrm{i} \langle y,v_0 \rangle} \right|^2}{|x-y|^{N+2s}} dx  dy \\
&=  \iint \frac{|Q(x)-Q(y)|^2}{|x-y|^{N+2s}}dx \, dy +  \iint \frac{|Q(x)|^2 \left| e^{\mathrm{i} \langle x,v_0 \rangle} - e^{\mathrm{i} \langle y,v_0 \rangle} \right|^2}{|x-y|^{N+2s}} dx dy  +\frac{2}{C(N,s)}\J_s \\
&=\frac{2}{C(N,s)} \| (-\Delta)^{\frac{s}{2}} Q \|_2^2 +2\iint \frac{|Q(x)|^2  \left( 1- \cos \langle x-y,v_0 \rangle \right)}{|x-y|^{N+2s}} dx dy+\frac{2}{C(N,s)}\J_s \\
&= \frac{2}{C(N,s)} \| (-\Delta)^{\frac{s}{2}} Q \|_2^2 + 2\iint \frac{|Q(x)|^2  \left( 1- \cos \langle z , v_0 \rangle \right)}{|z|^{N+2s}} dx dz+\frac{2}{C(N,s)}\J_s \\
&= \frac{2}{C(N,s)} \| (-\Delta)^{\frac{s}{2}} Q \|_2^2 + 
2\int  |Q(x)|^2 \frac{|v_0|^{2s}}{C(N,s)}+\frac{2}{C(N,s)}\J_s  \\
&= \frac{2 }{C(N,s)} \big( \| (-\Delta)^{\frac{s}{2}} Q \|_2^2 + |v_0|^{2s} \|Q\|_2^2+{\J}_s \big).
\end{align*}
%
Therefore,
\begin{equation} \label{eq:4.24} 
\| (-\Delta)^{\frac{s}{2}} \big(
  Q \left( \cdot\right) e^{\mathrm{i}\langle \cdot,v_0 \rangle} \big)
  \|_2^2 = \| (-\Delta)^{\frac{s}{2}} Q \|_2^2 + |v_0|^{2s}
  \|Q\|_2^2 +\J_s.
\end{equation}
We know from a direct elementary computation (since
$\|(-\Delta)^{1/2}\varphi\|_2=\|\nabla\varphi\|_2$) that
\begin{equation} 
\label{eq:4.24-bis}
\big\| (-\Delta)^{1/2} \big( Q \left(\cdot \right) e^{\mathrm{i}\langle \cdot,v_0 \rangle} 
\big) \big\|_2^2 = \| (-\Delta)^{1/2} Q \|_2^2 + |v_0|^{2} \|Q\|_2^2.
\end{equation}
From Lemma \ref{lemma_fall}, we learn that
\begin{align*}
  \| (-\Delta)^{\frac{s}{2}} \big( Q \left(\cdot\right)
  e^{\mathrm{i}\langle \cdot,v_0 \rangle} \big) \|_2^2&= \big\|
  (-\Delta)^{1/2} \big( Q \left( \cdot\right) e^{\mathrm{i} \langle \cdot,v_0
    \rangle} \big) \big\|_2^2+{\mathcal O}((1-s)^2),
\\
\| (-\Delta)^{\frac{s}{2}} Q \|_2^2 &=\| (-\Delta)^{1/2} Q \|_2^2+{\mathcal O}((1-s)^2),
\notag
\end{align*}
Taking into account that $|v_0|^{2s}-|v_0|^2={\mathcal O}(1-s),$
it follows by comparing \eqref{eq:4.24} and \eqref{eq:4.24-bis} that
$\J_s={\mathcal O}(1-s)$. Whence, by energy conservation, we conclude that
\begin{multline*}
E_\varepsilon(t) = E_\varepsilon(0) = \frac{1}{2} \| (-\Delta)^{\frac{s}{2}} Q \|_2^2 + \frac{1}{2} |v_0|^{2s} \|Q\|_2^2 
+ \int V(\varepsilon x) |Q(x)|^2  - \frac{1}{p+1} \int |Q|^{2p+2} +\frac{1}{2}\J_s \\
= \mathcal{E}(Q) + \frac{1}{2}m|v_0|^{2s} + mV(0)-m V(0) + \int V(\varepsilon x) |Q(x)|^2  +\frac{1}{2}\J_s\\
= \mathcal{E}(Q) + \mathcal{H}(t) + \int V(\varepsilon x) |Q(x)|^2 \, dx - m V(0)+\frac{1}{2}\J_s.
\end{multline*}
It is readily checked that $\mathcal{H}$ is conserved 
along the trajectory $x(t)$, in light of equation~\eqref{sysdin-s-intro}.
Since the Hessian $\nabla^2 V$ is bounded and, by the radial symmetry of $Q$, 
\[
\int \langle x,\nabla V(0)\rangle |Q(x)|^2 =0,
\]
we conclude that $\int V(\varepsilon x)|Q(x)|^2  - m V(0) ={\mathcal O}(\varepsilon^2)$.
This ends the proof.
\end{proof}

\begin{remark}
  Unlike the local case $s=1$, in the cases $s\in(0,1)$ we cannot expect a precise
  conclusion as $E_\eps(t) = {\mathcal E}(r)+{\mathcal
    H}(t)+O(\eps^2)$. Indeed, the fractional Laplacian does not obey a
  Leibniz rule for differentiating products.
\end{remark}

\noindent
For the fractional norms of $u^\eps$, we have the following

\begin{lemma}
There exists a constant $C>0$ such that 
\[
\|(-\Delta)^{\frac{s}{2}}u^\varepsilon(t)\|_2 \leq C \varepsilon^{\frac{N-2s}{2}},
\]
for every $t \geq 0$ and every $\varepsilon>0$.
\end{lemma}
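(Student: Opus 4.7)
The plan is to extract the bound from the energy conservation identity, using mass conservation and the Gagliardo--Nirenberg interpolation inequality to absorb the subcritical nonlinearity into the kinetic part.

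First I would rewrite energy conservation as
\[
\frac{1}{2\eps^{N-2s}}\int|(-\Delta)^{s/2}u^\eps(t)|^2
=E_\eps(0)-\frac{1}{\eps^N}\int V|u^\eps(t)|^2
+\frac{1}{(p+1)\eps^N}\int |u^\eps(t)|^{2p+2}.
\]
By Lemma \ref{espansione-energia-mod} (together with Theorem \ref{th:2.3}(vi) and the fact that $\mathcal H(0)$, $\mathcal J_s$ and the $\mathcal O(\eps^2)$ remainder are uniformly bounded in $\eps\in(0,1]$ and $s\in(0,1)$), the initial energy $E_\eps(0)$ is bounded by a constant $K$ independent of $\eps,s$. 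The potential term is controlled by $\|V\|_\infty\,m$ provided $V$ is bounded (more generally, by $\|V^-\|_\infty m$ if $V$ is only bounded below, which is sufficient for an upper bound on $E_\eps(0)-\frac{1}{\eps^N}\int V|u^\eps|^2$). Mass conservation gives $\|u^\eps(t)\|_2^2=\eps^N m$.

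Next I would apply the Gagliardo--Nirenberg inequality \eqref{eq:1.7}, whose constant is uniform for $s\in(\delta,1]$. By a direct scaling check, the exponents are fixed by dimensional analysis to yield
\[
\|u^\eps(t)\|_{2p+2}^{2p+2}
\le C\,\|u^\eps(t)\|_2^{2(p+1)-Np/s}\,
\|(-\Delta)^{s/2}u^\eps(t)\|_2^{Np/s}.
\]
Setting
\[
B(t):=\eps^{-(N-2s)/2}\|(-\Delta)^{s/2}u^\eps(t)\|_2
\]
and inserting the mass bound, after a short computation the factors of $\eps$ cancel exactly, so that
\[
\frac{1}{(p+1)\eps^N}\int|u^\eps(t)|^{2p+2}\le C\,B(t)^{Np/s}.
\]
Therefore
\[
B(t)^2\le 2K+2\|V\|_\infty m+2C\,B(t)^{Np/s}.
\]

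The key observation is the assumption $p<2s/N$, which gives the strict subcritical exponent $Np/s<2$. An elementary algebraic argument (e.g.\ Young's inequality $B^{Np/s}\le \tfrac{1}{4C}B^2+C'$) then shows $B(t)\le C''$ for a constant independent of $t$, $\eps$ and $s$. The mild obstacle is purely bookkeeping: one has to check that all the constants appearing (the Gagliardo--Nirenberg constant, the bound on $E_\eps(0)$, the bound on $Q_s$) are indeed uniform in $s$ away from $s=0$, which has already been arranged in the preliminaries of the paper.
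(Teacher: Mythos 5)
Your proposal is correct and follows essentially the same route as the paper: bound $E_\eps(0)$ via Lemma~\ref{espansione-energia-mod}, use that $V$ is bounded below together with mass conservation, apply the Gagliardo--Nirenberg inequality \eqref{eq:1.7} with the same exponent, and rescale the kinetic norm by $\eps^{(N-2s)/2}$ to reduce to $B^2\le C(1+B^{Np/s})$ with $Np/s<2$. The only difference is cosmetic: you make the final absorption step explicit via Young's inequality, where the paper simply asserts the conclusion.
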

\begin{proof}
Since $V$ is bounded from below and $E_\varepsilon(t)$ is 
uniformly bounded with respect to $t\geq 0$, $\varepsilon>0$ and $s\in (0,1]$ 
by Lemma~\ref{espansione-energia-mod}, 
we deduce that, for all $t \geq 0$, 
\begin{multline} \label{eq:4.25}
\|(-\Delta)^{\frac{s}{2}}u^\varepsilon(t)\|_2^2 \leq 
C \varepsilon^{N-2s} + C\varepsilon^{-2s} \int |u^\varepsilon(t)|^{2p+2} \\
\leq  C \Big( \varepsilon^{N-2s} +  \varepsilon^{-2s} \|u^\varepsilon(t)\|_2^{2p+2-\frac{Np}{s}} \|(-\Delta)^{\frac{s}{2}} u^\varepsilon(t)\|_2^{\frac{Np}{s}} \Big).
\end{multline}
Here we have used the Sobolev-Gagliardo-Nirenberg inequality~\eqref{eq:1.7}
with exponent
\[
\alpha:=\frac{2s(p+1)-Np}{2s(p+1)}\in (0,1).
\]
Recalling that \(\|u^\varepsilon(t)\|_2 = \sqrt{m} \varepsilon^{N/2}\) by the conservation of the mass, we can write \eqref{eq:4.25} as
\begin{equation} \label{eq:4.26}
\|(-\Delta)^{\frac{s}{2}}u^\varepsilon(t)\|_2^2 \leq C \Big( \varepsilon^{N-2s} + \varepsilon^{-2s}\varepsilon^{\frac{N}{2}\left( 2p+2-\frac{Np}{s} \right)}\|(-\Delta)^{\frac{s}{2}}u^\varepsilon(t)\|_2^{\frac{Np}{s}} \Big).
\end{equation}
Now, setting for simplicity $\mathscr{N} = \mathscr{N}(\varepsilon) =\|(-\Delta)^{\frac{s}{2}}u^\varepsilon(t)\|_2>0$, (\ref{eq:4.26}) becomes
\[
\mathscr{N}^2 \leq C \left( \varepsilon^{N-2s} +  \varepsilon^{-2s}\varepsilon^{\frac{N}{2}\left( 2p+2-\frac{Np}{s} \right)}\mathscr{N}^{\frac{Np}{s}} \right).
\]
We claim that \(\mathscr{N} \leq C \varepsilon^{\frac{N-2s}{2}}\). Indeed, we rescale $\mathscr{N} = \varepsilon^{\frac{N-2s}{2}}\mathscr{Z}$ and deduce that
\[
\mathscr{Z}^2 \leq C (1 +  \mathscr{Z}^{\frac{Np}{s}} ).
\]
Since $Np<2s$ by assumption, we are lead to
\(\mathscr{Z} \leq C\) and the proof is complete.
\end{proof}

\vspace{3pt}
\noindent
Define now	
\[
\Psi^\eps(t,x):=\exp\Big(-\frac{\mathrm{i}}{\varepsilon}\langle\eps x+x(t), v(t)\rangle\Big) u^\eps (\eps x+x(t)),
\quad x\in {\mathbb R}^N,\,\, t\geq 0,
\]
where $(x(t),v(t))$ is the solution to problem~\eqref{sysdin-s-intro}. Notice that the exponential
function is a globally Lipschitz continuous complex valued function with modulus equal to one. Then, by a variant of
\cite[Lemma 5.3]{DiNezza}, it follows that $\Psi^\eps(t,\cdot)\in H^s({\mathbb R}^N,\C)$ for any $t\geq 0$
and $\eps>0$.  

\vskip3pt
\noindent
We have the following
\begin{lemma}
\label{energ2}
We have
\[
\mathcal{E}(\Psi^\eps(t))=\frac{1}{2}m|v(t)|^{2s}+\frac{\M(t,\varepsilon,s)}{2}
-\frac{1}{\eps^N}\int V(x)|u^\eps(t,x)|^2 +E_\eps(t),
\]
for every $t \geq 0$ and every $\varepsilon>0$.
\end{lemma}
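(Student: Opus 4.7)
The plan is to compute $\mathcal{E}(\Psi^\eps(t))$ directly from its definition and match terms against $E_\eps(t)$, following the same structure as the proof of Lemma \ref{espansione-energia-mod}.

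First, I set $\phi^\eps(x):=u^\eps(t,\eps x+x(t))$ and factor
\[
\Psi^\eps(t,x)=e^{-\mathrm{i}\langle x(t),v(t)\rangle/\eps}\,e^{-\mathrm{i}\langle x,v(t)\rangle}\phi^\eps(x),
\]
so that the first (constant-modulus) phase drops out of all absolute values. The potential-free rescaled pieces are then straightforward: by the change of variable $y=\eps x+x(t)$, $dy=\eps^{N}dx$,
\[
\|\phi^\eps\|_2^2=\eps^{-N}\|u^\eps(t)\|_2^2=m,\qquad
\int|\Psi^\eps(t,x)|^{2p+2}dx=\eps^{-N}\int|u^\eps(t,y)|^{2p+2}dy.
\]

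The key step is the fractional Gagliardo seminorm of $e^{-\mathrm{i}\langle x,v(t)\rangle}\phi^\eps(x)$. Using the singular-integral representation of $\|(-\Delta)^{s/2}\cdot\|_2^2$ and repeating verbatim the algebraic identity in the proof of Lemma \ref{espansione-energia-mod} (adding and subtracting $\phi^\eps(x)e^{-\mathrm{i}\langle y,v(t)\rangle}$ under the double integral, and using \eqref{gr8} with $v_0$ replaced by $v(t)$), I obtain
\[
\|(-\Delta)^{s/2}\Psi^\eps(t,\cdot)\|_2^2=\|(-\Delta)^{s/2}\phi^\eps\|_2^2+|v(t)|^{2s}\|\phi^\eps\|_2^2+\M(t,\eps,s),
\]
where
\[
\M(t,\eps,s):=-C(N,s)\iint\frac{\phi^\eps(x)\bigl(\phi^\eps(x)-\phi^\eps(x-z)\bigr)\bigl(1-\cos\langle z,v(t)\rangle\bigr)}{|z|^{N+2s}}dx\,dz,
\]
is exactly the cross term produced by the expansion (the analogue of $\J_s$, now with $\phi^\eps$ in place of $Q$ and $v(t)$ in place of $v_0$). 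Using the scaling $(-\Delta)^{s/2}\phi^\eps(x)=\eps^s\bigl((-\Delta)^{s/2}u^\eps(t)\bigr)(\eps x+x(t))$ followed by the same change of variable gives
\[
\|(-\Delta)^{s/2}\phi^\eps\|_2^2=\eps^{2s-N}\|(-\Delta)^{s/2}u^\eps(t)\|_2^2 .
\]

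Putting the pieces together,
\[
\mathcal{E}(\Psi^\eps(t))=\frac{1}{2\eps^{N-2s}}\|(-\Delta)^{s/2}u^\eps(t)\|_2^2+\frac{m}{2}|v(t)|^{2s}+\frac{\M(t,\eps,s)}{2}-\frac{1}{(p+1)\eps^N}\int|u^\eps(t)|^{2p+2},
\]
and recognising the first and last terms as $E_\eps(t)-\eps^{-N}\int V(x)|u^\eps(t,x)|^2$ yields the claimed identity.

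The only delicate point is ensuring that the fractional norm of the product $e^{-\mathrm{i}\langle\cdot,v(t)\rangle}\phi^\eps$ splits cleanly; this is handled purely at the level of the double-integral representation, exactly as in Lemma \ref{espansione-energia-mod}, so no pointwise Leibniz rule is needed. All other manipulations are elementary changes of variable together with the mass conservation law.
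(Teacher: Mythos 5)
Your overall strategy is exactly the paper's: write $\|(-\Delta)^{s/2}\Psi^\eps(t)\|_2^2$ via the Gagliardo double integral, add and subtract $\phi^\eps(x)e^{-\mathrm{i}\langle y,v(t)\rangle}$, identify the three resulting pieces as $\eps^{2s-N}\|(-\Delta)^{s/2}u^\eps(t)\|_2^2$, $m|v(t)|^{2s}$ (via \eqref{gr8} and mass conservation) and a cross term $\M$, and then recombine with the $L^{2p+2}$ term to recognise $E_\eps(t)-\eps^{-N}\int V|u^\eps|^2$. That is the proof in the paper, and all of those steps are fine.

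There is, however, one concrete error: your explicit formula for the cross term is wrong because $\phi^\eps$ is complex-valued. Expanding $|A+B|^2=|A|^2+|B|^2+2\Re(A\overline{B})$ with $A=\phi^\eps(x)\big(e^{-\mathrm{i}\langle x,v\rangle}-e^{-\mathrm{i}\langle y,v\rangle}\big)$ and $B=\big(\phi^\eps(x)-\phi^\eps(y)\big)e^{-\mathrm{i}\langle y,v\rangle}$ gives the cross term
\[
2\,\Re\Big[\phi^\eps(x)\,\overline{\big(\phi^\eps(x)-\phi^\eps(y)\big)}\,\big(e^{-\mathrm{i}\langle x-y,v(t)\rangle}-1\big)\Big],
\]
and this is what $\M(t,\eps,s)$ must be (this is the paper's definition). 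You instead wrote $-\phi^\eps(x)\big(\phi^\eps(x)-\phi^\eps(x-z)\big)\big(1-\cos\langle z,v(t)\rangle\big)$, i.e.\ you transplanted the formula for $\J_s$ verbatim; that form is only valid when the profile is real (as $Q$ is), since then the conjugate is harmless and the $\sin\langle z,v\rangle$ part integrates against an even function of $z$ and cancels. For the complex, non-symmetric $\phi^\eps$ your expression is not even real-valued in general, and it omits the contribution $\Im\big[\phi^\eps(x)\overline{(\phi^\eps(x)-\phi^\eps(x-z))}\big]\sin\langle z,v(t)\rangle$. Since the lemma is precisely an identity featuring $\M$, this matters; the fix is simply to keep the real part and the factor $e^{-\mathrm{i}\langle z,v(t)\rangle}-1$ rather than $-(1-\cos\langle z,v(t)\rangle)$. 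Everything else in your argument then goes through unchanged.
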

\begin{proof}
Proceeding as in the proof of Lemma \ref{espansione-energia-mod}, we compute
\[
\int |(-\Delta)^{\frac{s}{2}} \Psi^\eps(t)|^2=\frac{C(N,s)}{2}\iint \frac{\left| \Psi^\varepsilon (t,x) - 
\Psi^\varepsilon (t,y) \right|^2}{|x-y|^{N+2s}} dx  dy = 
\I_1(t,\varepsilon,s) + \I_2(t,\varepsilon,s) + \M(t,\varepsilon,s),
\]
where we have set
\begin{align*}
\I_1(t,\varepsilon,s) &:= \frac{C(N,s)}{2}\iint \frac{\left| u^\varepsilon (t,\eps x+x(t)) - u^\varepsilon (t,\eps y+x(t)) \right|^2}{|x-y|^{N+2s}} dx dy \\
\I_2(t,\varepsilon,s) &:= \frac{C(N,s)}{2}\iint \big| u^\varepsilon (t,\varepsilon x+x(t)) \big|^2 
\frac{\big| e^{\frac{\mathrm{i}}{\varepsilon} \langle \varepsilon x+x(t),v(t) \rangle} - e^{\frac{\mathrm{i}}{\varepsilon} \langle \varepsilon y+x(t),v(t) \rangle} \big|^2}{|x-y|^{N+2s}} dx dy
\end{align*}
and
\begin{multline*}
\M(t,\varepsilon,s) := \\
C(N,s)\iint \Re\Big[u^{\varepsilon}(t,\varepsilon x+x(t)) \overline{\big[u^\varepsilon (t,\varepsilon x+x(t))-u^\varepsilon (t,\varepsilon y+x(t))\big]} \frac{e^{-\mathrm{i}\langle x-y , v(t) \rangle}-1}{|x-y|^{N+2s}}\Big]dx dy.
\end{multline*}
By changing variables, and recalling again~\eqref{gr8}, it readily follows that
\begin{align*}
\I_1(t,\varepsilon,s) &=  \varepsilon^{2s-N} \| (-\Delta)^{\frac{s}{2}} u^\varepsilon(t) \|_2^2, \\
\I_2(t,\varepsilon,s) &= \varepsilon^{-N} |v(t)|^{2s} \left\| u^\varepsilon(t) \right\|_2^2=
m|v(t)|^{2s}  \\
\M(t,\varepsilon,s) &= C(N,s)\varepsilon^{2s-N}  \iint \Re\Big[ u^{\varepsilon}(t,x) \overline{\big[u^\varepsilon (t,x)-u^\varepsilon (t,y)\big]} \frac{e^{-\frac{\mathrm{i}}{\eps}\langle x-y,v(t)\rangle}-1}{|x-y|^{N+2s}}\Big]dx  dy.
\end{align*}
It follows that
\begin{align*}
\mathcal{E}(\Psi^\eps(t))&= \frac{1}{2} \int |(-\Delta)^{\frac{s}{2}} \Psi^\eps(t)|^2  - \frac{1}{p+1} \int |\Psi^\eps(t)|^{2p+2} \\
&=\frac{1}{2}\frac{1}{\varepsilon^{N-2s}} \left\| (-\Delta)^{\frac{s}{2}} u^\varepsilon(t) \right\|_2^2+
\frac{1}{2}m|v(t)|^{2s}-\frac{1}{(p+1)\eps^N}\|u^\varepsilon(t,x)\|_{2p+2}^{2p+2}+\frac{\M(t,\varepsilon,s)}{2} \\
&=\frac{1}{2}m|v(t)|^{2s}+\frac{\M(t,\varepsilon,s)}{2}
-\frac{1}{\eps^N}\int V(x)|u^\eps(t,x)|^2 +E_\eps(t),
\end{align*}
concluding the proof.
\end{proof}
\noindent
Finally, we have the following
\begin{corollary}
\label{stima-energg}
There holds
\[
\mathcal{E}(\Psi^\eps(t))-{\mathcal E}(Q)=
{\mathscr E}(t,\eps,s)+{\mathcal O}(\eps^2),
\]
where ${\mathscr E}(t,\eps,s)={\mathscr E}_1(t,\eps,s)+{\mathscr E}_2(t,\eps,s)$ and
\begin{align*}
{\mathscr E}_1(t,\eps,s) &:=m|v(t)|^{2s}+\frac{\M(t,\varepsilon,s)+\mathbb{J}_s}{2},  \\
{\mathscr E}_2(t,\eps,s) &:=mV(x(t))-\frac{1}{\eps^N}\int V(x)|u^\eps(t,x)|^2,
\end{align*}
for every $t \geq 0$ and every $\varepsilon>0$. Furthermore ${\mathscr E}(0,\eps,s)={\mathcal O}(\eps^2)$.
\end{corollary}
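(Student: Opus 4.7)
The proof is essentially a bookkeeping exercise: we combine the two preceding lemmas and interpret the result, then check the initial-time vanishing by a direct computation.

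The plan is to start from Lemma~\ref{energ2}, which reads
\[
\mathcal{E}(\Psi^\eps(t))=\tfrac{1}{2}m|v(t)|^{2s}+\tfrac{1}{2}\M(t,\varepsilon,s)
-\tfrac{1}{\eps^N}\int V(x)|u^\eps(t,x)|^2 +E_\eps(t),
\]
and substitute the expression for $E_\eps(t)$ provided by Lemma~\ref{espansione-energia-mod},
\[
E_\eps(t)=\mathcal{E}(Q)+\mathcal{H}(t)+\tfrac{1}{2}\J_s+\mathcal{O}(\eps^2)
=\mathcal{E}(Q)+\tfrac{1}{2}m|v(t)|^{2s}+mV(x(t))+\tfrac{1}{2}\J_s+\mathcal{O}(\eps^2).
\]
Subtracting $\mathcal{E}(Q)$, the two copies of $\tfrac{1}{2}m|v(t)|^{2s}$ combine into $m|v(t)|^{2s}$, the term $\tfrac{1}{2}\M(t,\eps,s)$ pairs with $\tfrac{1}{2}\J_s$, and the potential terms combine into $mV(x(t))-\eps^{-N}\int V(x)|u^\eps(t,x)|^2$. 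This is exactly ${\mathscr E}_1(t,\eps,s)+{\mathscr E}_2(t,\eps,s)+\mathcal{O}(\eps^2)$, so the first assertion is immediate.

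For the initial-time estimate ${\mathscr E}(0,\eps,s)=\mathcal{O}(\eps^2)$, I would proceed as follows. Using $x(0)=x_0$, $v(0)=v_0$ together with the prescribed initial datum $u^\eps(0,x)=Q\!\left(\tfrac{x-x_0}{\eps}\right)e^{\tfrac{\mathrm{i}}{\eps}\langle x,v_0\rangle}$, a direct substitution into the definition of $\Psi^\eps$ gives
\[
\Psi^\eps(0,x)=e^{-\tfrac{\mathrm{i}}{\eps}\langle \eps x+x_0,v_0\rangle}\,Q(x)\,e^{\tfrac{\mathrm{i}}{\eps}\langle \eps x+x_0,v_0\rangle}=Q(x),
\]
so that $\mathcal{E}(\Psi^\eps(0))-\mathcal{E}(Q)=0$. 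Combined with the first assertion, this forces ${\mathscr E}(0,\eps,s)=\mathcal{O}(\eps^2)$.

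Alternatively, for transparency one can check the vanishing of ${\mathscr E}_1$ and ${\mathscr E}_2$ at $t=0$ separately. For ${\mathscr E}_2(0,\eps,s)$, after the change of variable $y=(x-x_0)/\eps$, one gets $\int V(x_0+\eps y)Q^2(y)\,dy - mV(x_0)$; a second-order Taylor expansion of $V$ around $x_0$ kills the zero-order term, the first-order term vanishes by the radial symmetry of $Q$, and the Hessian bound from Theorem~\ref{thm:dyn} (or its hypotheses) controls the remainder by $\mathcal{O}(\eps^2)$. For ${\mathscr E}_1(0,\eps,s)$, the two contributions $m|v_0|^{2s}$ and $\tfrac{1}{2}(\M(0,\eps,s)+\J_s)$ cancel up to $\mathcal{O}(\eps^2)$ by precisely the computation already carried out in the proof of Lemma~\ref{espansione-energia-mod}, since at $t=0$ the scaling of $\Psi^\eps(0)=Q$ removes all $\eps$-dependence. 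No step here is a genuine obstacle; the only point worth stressing is that one must verify that the $\mathcal{O}(\eps^2)$ remainders from Lemma~\ref{espansione-energia-mod} and from the Taylor expansion of $V$ are uniform in $s\in(0,1)$, which follows from the uniform bounds on $Q_s$ recorded in Theorem~\ref{th:2.3}(vi).
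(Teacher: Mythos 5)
Your proof of the first identity is the same bookkeeping as the paper's: substitute the expansion of $E_\eps(t)$ from Lemma~\ref{espansione-energia-mod} into Lemma~\ref{energ2} and regroup. For the initial-time claim ${\mathscr E}(0,\eps,s)={\mathcal O}(\eps^2)$ you take a genuinely different, and in fact cleaner, route: you observe that $\Psi^\eps(0)=Q$ identically (the phase in the definition of $\Psi^\eps$ exactly cancels the phase of the initial datum), so $\mathcal{E}(\Psi^\eps(0))-\mathcal{E}(Q)=0$, and the claim drops out of the first identity because its ${\mathcal O}(\eps^2)$ remainder is the concrete, $t$-independent quantity $\int V(\eps x+x_0)|Q|^2-mV(x_0)$ coming from Lemma~\ref{espansione-energia-mod}. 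The paper instead verifies the two pieces separately: it computes ${\mathscr E}_1(0,\eps,s)=0$ \emph{exactly}, by writing out $\M(0,\eps,s)$ as a double integral against the explicit initial datum and invoking the identity \eqref{gr8} so that $m|v_0|^{2s}$ cancels, and then notes ${\mathscr E}_2(0,\eps,s)={\mathcal O}(\eps^2)$ by the same Taylor expansion of $V$. Your shortcut buys brevity and avoids re-doing the nonlocal computation; the paper's direct computation buys the slightly stronger information that ${\mathscr E}_1$ vanishes identically at $t=0$ (not merely up to ${\mathcal O}(\eps^2)$), which is what your ``alternative'' paragraph essentially reproduces, except that the boundedness of $\nabla^2 V$ used to control the remainder is the hypothesis already invoked in Lemma~\ref{espansione-energia-mod} rather than anything imported from Theorem~\ref{thm:dyn}. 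Both arguments are correct.
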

\begin{proof}
By combining Lemma~\ref{energ2} with Lemma~\ref{espansione-energia-mod}, we find
\begin{align*}
\mathcal{E}(\Psi^\varepsilon(t)) &= \frac{1}{2} m|v(t)|^{2s} + \frac{1}{2}\mathbb{M}(t,\varepsilon,s)  - \frac{1}{\varepsilon^N} \int V(x) |u^\varepsilon(t,x)|^2  +E_\varepsilon(t)\\
&= \frac{1}{2} m |v(t)|^{2s} + \frac{1}{2} \mathbb{M}(t,\varepsilon,s) - 
\frac{1}{\varepsilon^N} \int V(x) |u^\varepsilon(t,x)|^2  \\
& + \mathcal{E}(Q) + \frac{1}{2}m|v(t)|^{2s} + mV(x(t))+{\mathcal O}(\varepsilon^2) + \frac{1}{2}\mathbb{J}_s \\
&= m|v(t)|^{2s} + \frac{\mathbb{M}(t,\varepsilon,s) + \mathbb{J}_s}{2}+\mathcal{E}(Q)+ mV(x(t))-
\frac{1}{\varepsilon^{N}} \int V(x)|u^\varepsilon(t,x)|^2 + {\mathcal O} (\varepsilon^2) \\
&={\mathscr E}_1(t,\eps,s)+{\mathscr E}_2(t,\eps,s)+ {\mathcal O} (\varepsilon^2).
\end{align*}
Now, since we have $u^{\varepsilon}(0,\varepsilon x+x(0))=Q(x)e^{\frac{i}{\eps}\langle \eps x+x_0,v_0\rangle}$,
we obtain
\begin{align*}
{\mathscr E}_1(0,\eps,s)&=m|v_0|^{2s}+\frac{\M(0,\varepsilon,s)}{2} \\
&\qquad {}-\frac{C(N,s)}{2}\iint \frac{Q\left( x \right)(Q\left( x \right)-Q\left( x-z \right))
(1-\cos\langle z,v_0 \rangle)}{|z|^{N+2s}} dx \, dz\\
&=
m|v_0|^{2s}+
\frac{C(N,s)}{2}\Re \iint Q(x)\Big[Q(x)-Q(y)e^{i\langle x-y, v_0\rangle}\Big] 
\frac{e^{-i\langle x-y,v_0\rangle}-1}{|x-y|^{N+2s}}dx dy \\
& -\frac{C(N,s)}{2}\iint \frac{Q\left( x \right)(Q\left( x \right)-Q\left( x-z \right))
(1-\cos\langle z,v_0 \rangle)}{|z|^{N+2s}} dx \, dz\\
&=
m|v_0|^{2s}-C(N,s)\int Q^2(x) \int\frac{1-\cos\langle z,v_0\rangle}{|z|^{N+2s}}dx dz=0.
\end{align*}
That ${\mathscr E}_2(0,\eps,s)={\mathcal O}(\eps^2)$ is immediately seen.
\end{proof}

\begin{remark}
\label{mon-identities}
From Corollary~\ref{stima-energg}, it seems evident that the quantity
\begin{multline*}
\varepsilon^{2s-N}\iint \frac{  \Re\big[  u^{\varepsilon}(t,x) \overline{[(u^\varepsilon (t,x)-u^\varepsilon (t,x-z))]} (e^{-\frac{\mathrm{i}}{\eps}\langle z,v(t)\rangle}-1)\big]
}{|z|^{N+2s}} dx dz\\
{}- \iint\frac{Q\left( x \right)(Q\left( x \right)-Q\left( x-z \right))
(1-\cos\langle z,v_0 \rangle)}{|z|^{N+2s}}dx dz,
\end{multline*}
multiplied by $C(N,s)/2$, represents a {\em nonlocal
counterpart} of the total momentum in the local case, precisely (compare ${\mathscr E}_1$ and ${\mathscr E}_2$
with the right-hand side of \cite[formula 3.5]{keraa})
\[
-\big\langle\dot x(t),\int p^\eps_{\mathrm{local}} (t,x)\big\rangle,\qquad
p^\eps_{\mathrm{local}} (t,x):=\frac{1}{\eps^{N-1}} 
\mathfrak{Im} (\bar{u}^\eps(t,x) \nabla u^\eps(t,x)), \quad x\in{\mathbb R}^N,\ t\in [0,\infty).
\]
As known, $p^\eps_{\mathrm{local}}$
satisfies the following identities, for $t\geq 0$ and $x\in{\mathbb R}^N$,
\[
\frac{\partial}{\partial t} \frac{|u^\eps(t,x)|^2}{\eps^3}=-\operatorname{div}(p^\eps_{\mathrm{local}}(t,x)), 
\qquad
\frac{\partial}{\partial t} \int  p^\eps_{{\rm local}} (t,x)\, dx
= 
- \frac{1}{\eps^N}\int \nabla V(x) |u^\eps(t,x)|^2\, dx.
\]
In the fractional case, a counterpart of these
identities seems hard to obtain.
\end{remark}

\subsection{Proof of Theorem~\ref{primoteo}}
By Corollary~\ref{stima-energg} and by the characterization of the ground states as minima on the sphere of $L^2$, we have
$0\leq \mathcal{E}(\Psi^\eps(t))-{\mathcal E}(Q)={\mathscr E}(t,\eps,s)+{\mathcal O}(\eps^2)$, where 
${\mathscr E}$ satisfies ${\mathscr E}(0,\eps,s)={\mathcal O}(\eps^2)$. By Theorem~\ref{thm:enconv}
we know that there exist constants $B,C>0$ such that for $\phi\in H^1(\mathbb{R}^3,\mathbb{C})$ with
$\| \phi \|_2 = \| Q \|_2$, we have
\[
\mathcal{E} (\phi) - \mathcal{E} (Q) \geq C
\inf_{x\in\mathbb{R}^3,\,\theta\in [0,2\pi)}
\| \phi - e^{i\theta} Q(\cdot - x) \|_{H^s}^2
\]
provided that $\mathcal{E} (\phi) - \mathcal{E} (Q)\leq B$. Then, introducing
\[
T^{\eps,s}:=\sup\Big\{t\in [0,T_0]\mid \mathscr{E}(\tau,\eps,s)\leq B\,\, 
\hbox{ for all } \tau\in [0,t]\Big\}
\]
and, since ${\mathscr E}(0,\eps,s)={\mathcal O}(\eps^2)$, it follows that $T^{\eps,s}>0$ 
for any $\eps>0$ sufficiently small and every $s\in (0,1)$ there exist families
of continuous functions $\theta^{\eps,s}\colon \R\to[0,2\pi)$ and 
$z^{\eps,s}\colon \mathbb{R}^N\to\mathbb{R}$ which satisfy the assertion.
\qed

\subsection{Proof of Theorem~\ref{thm:dyn}}
For $s\in (0,1]$, consider the 
solution $u^\eps_s(t,\cdot) \in H^s(\mathbb{R}^N,\mathbb{C})$ to the Cauchy problem \eqref{eq:CP}
Then, taking \cite[Proposition 2.2 and Lemma 5.3]{DiNezza} into account, there exists a 
positive constant $C$ such that 
\begin{equation*}
\Big\|u^\eps_s(t)-Q_s\Big(\frac{x-x_s(t)}{\eps}\Big) 
e^{\mathrm{i}\frac{\langle v_s(t), x\rangle}{\eps}}\Big\|_{{\mathcal H}_\eps^s}^2\leq
C\sum_{i=1}^4 \A_i(t;\eps,s),
\end{equation*}
where we have set
\begin{align*}
& \A_1(t;\eps,s):=\|u^\eps_s(t)-u^\eps_1(t)\|_{{\mathcal H}_\eps^s}^2, \\
& \A_2(t;\eps,s):=\frac{1}{\eps^{2(1-s)}}\Big\|u^\eps_1(t)-Q_1\Big(\frac{x-x_1(t)}{\eps}\Big) 
e^{\mathrm{i}\frac{\langle v_1(t), x \rangle}{\eps}}\Big\|_{{\mathcal H}_\eps^1}^2, \\
& \A_3(t;\eps,s):=\Big\|Q_1\Big(\frac{x-x_s(t)}{\eps}\Big) 
e^{\mathrm{i}\frac{\langle v_s(t), x\rangle}{\eps}}-Q_1\Big(\frac{x-x_1(t)}{\eps}\Big) 
e^{\mathrm{i}\frac{\langle v_1(t), x\rangle}{\eps}}\Big\|_{{\mathcal H}_\eps^s}^2,  \\
& \A_4(t;\eps,s):=\Big\|Q_s\Big(\frac{x-x_s(t)}{\eps}\Big) 
-Q_1\Big(\frac{x-x_s(t)}{\eps}\Big)\Big\|_{{\mathcal H}_\eps^s}^2,
\end{align*}
over finite time intervals $[0,T]$, for $T>0$. Then, we have the following
\begin{proposition}
There results
\begin{itemize}
\item[(a)] 
$\mathbb{A}_2(t;\varepsilon,s) \leq C \varepsilon^{2s}$ 
for every $\eps\in (0,1]$, $s\in (0,1)$, $t\geq 0$
and some $C>0$;
\item[(b)] $\lim\limits_{s\to 1^-}\mathbb{A}_3(t;\varepsilon,s) = 0$  
for every $\eps\in (0,1]$ and $t\geq 0$;
\item[(c)] $\lim\limits_{s\to 1^-}\mathbb{A}_4(t;\varepsilon,s) =0$ 
for every $\eps\in (0,1]$ and $t\geq 0$.
\end{itemize}
\end{proposition}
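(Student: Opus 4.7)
The plan is to treat the three items separately. The common thread is that (a) reduces to the known classical ($s=1$) soliton dynamics estimate, while (b) and (c) rely on continuous dependence of the ground state and the Newtonian flow on the fractional parameter $s$ as $s\to 1^-$.

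For part (a), observe that by definition
\[
\A_2(t;\eps,s) = \eps^{-2(1-s)}\Big\|u^\eps_1(t)-Q_1\Big(\frac{x-x_1(t)}{\eps}\Big) e^{\mathrm{i}\frac{\langle v_1(t), x\rangle}{\eps}}\Big\|_{{\mathcal H}_\eps^1}^2,
\]
and the bracketed $\mathcal{H}_\eps^1$-norm is precisely the classical soliton dynamics remainder for the local Schr\"odinger equation. Invoking the Bronski--Jerrard \cite{Bro-Jerr} / Keraani \cite{keraa} estimate (which applies because $u^\eps_1$ solves exactly the same Cauchy problem with $s=1$), the bracket is $\mathcal{O}(\eps^2)$ uniformly on $[0,T]$. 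Multiplying by $\eps^{2(s-1)}$ yields the bound $C\eps^{2s}$.

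For part (b), after the change of variables $y=x/\eps$ the $\eps$-dependent scaling disappears and we are reduced to estimating, in the $s$-independent fractional norm $\|\cdot\|_{H^s}$, the difference
\[
Q_1\bigl(y - x_s(t)/\eps\bigr) e^{\mathrm{i}\langle v_s(t), y\rangle} - Q_1\bigl(y - x_1(t)/\eps\bigr) e^{\mathrm{i}\langle v_1(t), y\rangle}.
\]
We split this into two pieces by inserting the hybrid function $Q_1(y-x_1(t)/\eps) e^{\mathrm{i}\langle v_s(t),y\rangle}$. The first piece controls translations: by the uniform-in-$s$ continuity of the shift map $\tau\mapsto Q_1(\cdot-\tau)$ from $\R^N$ into $H^s$ (which follows from $Q_1\in H^{2s+1}$ by Theorem~\ref{th:2.3}(iii) and the uniform embedding constants noted after Lemma~\ref{lemma_fall}), this piece tends to $0$ as $s\to 1^-$, using that $x_s(t)\to x_1(t)$ on $[0,T]$ by hypothesis. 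The second piece handles the modulation by $e^{\mathrm{i}\langle v_s,y\rangle}$ and tends to $0$ by dominated convergence on the Fourier side, combined with $v_s(t)\to v_1(t)$ (which follows from $x_s\to x_1$ through the ODE \eqref{sysdin-s-intro} and the regularity of $V$).

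For part (c), by scaling we have $\A_4(t;\eps,s)=\tfrac12\|(-\Delta)^{s/2}(Q_s-Q_1)\|_2^2+\|Q_s-Q_1\|_2^2$. By Theorem~\ref{th:2.3}(vi) the family $\{Q_s\}$ is uniformly bounded in $H^s$ (with constants independent of $s$ close to $1$), so along any sequence $s_k\to 1^-$ we can extract a weak limit $Q_*$ in $H^{1-\delta}(\R^N)$ for small $\delta>0$. Combining Lemma~\ref{lemma_fall} (which controls $\|(-\Delta)^{s_k/2}\varphi-(-\Delta)^{1/2}\varphi\|_2$ for test functions $\varphi$) with the uniform decay from Theorem~\ref{th:2.3}(iii) and the compactness of $H^{s_k}_{\mathrm{loc}}\hookrightarrow L^{2p+2}_{\mathrm{loc}}$, we may pass to the limit in $\tfrac12(-\Delta)^{s_k}Q_{s_k}+Q_{s_k}=Q_{s_k}^{2p+1}$ to obtain $\tfrac12(-\Delta)Q_*+Q_*=Q_*^{2p+1}$. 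The uniqueness of positive radial ground states, Theorem~\ref{th:2.3}(v), forces $Q_*=Q_1$. Strong convergence in the $s$-dependent $H^s$-norm then follows from the convergence of the Gagliardo--Nirenberg / Pohozaev energy levels (via the explicit formulas derived in Lemma~\ref{mappa}) together with weak lower semicontinuity.

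The main obstacle is the strong convergence in (c): both the operator $(-\Delta)^{s/2}$ and the function $Q_s$ vary with $s$, so one cannot directly compare norms. The key technical step will be to show that $\|(-\Delta)^{s/2}Q_s\|_2\to\|(-\Delta)^{1/2}Q_1\|_2$ as $s\to 1^-$, which combined with the weak convergence upgrades to strong convergence; this is where the combination of Lemma~\ref{lemma_fall}, the decay estimate, and the Pohozaev identities plays a decisive role.
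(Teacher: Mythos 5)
Your treatment of (a) and (b) follows the paper's route essentially verbatim: for (a) the paper likewise just invokes Keraani's Theorem~1.1 (\cite{keraa}) to get the $\mathcal{O}(\eps^2)$ bound on the local remainder and absorbs the prefactor $\eps^{-2(1-s)}$ to obtain $C\eps^{2s}$, and for (b) it performs the same hybrid-insertion splitting into a translation term and a modulation term, controls the $\mathcal{H}^s_\eps$ norms by $H^1$ norms after rescaling, and concludes via continuity of translations and dominated convergence for the phase factor $\Xi_s$. The real divergence is in (c). After the scaling identity $\A_4=\|Q_s-Q_1\|_{H^s}^2$, the paper disposes of the claim in one line by citing Lemma~2.6 of \cite{vald-1s} (Fall--Valdinoci), which asserts precisely the convergence $\|Q_s-Q_1\|_{H^s}\to 0$ as $s\to 1^-$; you instead sketch a self-contained compactness--uniqueness argument (weak limits in $H^{1-\delta}$, passage to the limit in the equation, uniqueness of the local ground state, then an upgrade to strong convergence). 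That route is in principle viable, but as written it is not complete: you yourself flag that the decisive step is the norm convergence $\|(-\Delta)^{s/2}Q_s\|_2\to\|(-\Delta)^{1/2}Q_1\|_2$, and the ingredients you propose for it (continuity in $s$ of the variational levels in Lemma~\ref{mappa}, uniformity in $s$ of the decay constant in Theorem~\ref{th:2.3}(iii), tightness to rule out vanishing) are exactly the nontrivial content of the cited Fall--Valdinoci lemma rather than consequences of statements already available in the paper. So either quote that lemma, as the authors do, or be prepared to supply a genuine proof of the energy-level continuity and tightness; as it stands, part (c) of your argument identifies the obstacle but does not overcome it.
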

\begin{proof}
The proof of (a) follows immediately from \cite[Theorem 1.1]{keraa}. 
The proof of (b) is a consequence of the fact that $x_s(t) \to x_1(t)$ and $v_s(t) \to v_1(t)$ when $s \to 1$, since
\begin{align*}
\A_3(t,\eps,s)&\leq
C\Big\|Q_1\Big(\frac{\cdot-x_s(t)}{\eps}\Big)-Q_1\Big(\frac{\cdot-x_1(t)}{\eps}\Big) \Big\|_{{\mathcal H}_\eps^s}^2 
+\Big\|Q_1\Big(\frac{\cdot-x_1(t)}{\eps}\Big) 
\big[e^{\mathrm{i}\frac{\langle v_s(t), x \rangle}{\eps}}- 
e^{\mathrm{i}\frac{\langle v_1(t), x \rangle}{\eps}}\big]\Big\|_{{\mathcal H}_\eps^s}^2 \\
& =C\big\|Q_1(\cdot)-Q_1\Big(\cdot+\frac{x_s(t)-x_1(t)}{\eps}\Big) \big\|_{H^s}^2 
+\big\|Q_1(\cdot) \Xi_s(\cdot,t)\big\|_{H^s}^2 \\
&\leq C\big\|Q_1(\cdot)-Q_1\Big(\cdot+\frac{x_s(t)-x_1(t)}{\eps}\Big) \big\|^2_{H^1} 
+C\big\|Q_1(\cdot) \Xi_s(\cdot,t)\big\|^2_{H^1},
\end{align*}
where we have set
\[
\Xi_s(x,t):=e^{\mathrm{i} \langle v_s(t), x+\eps^{-1} x_1(t)\rangle}- 
e^{\mathrm{i} \langle v_1(t), (x+\eps^{-1} x_1(t)\rangle},\quad t\geq 0,\,\, x\in\R^N.
\]
The first term goes to zero as $s\to 1^-$, for any $\eps\in (0,1]$ and $t\geq 0$ 
(see e.g.\ \cite[p.185]{keraa}). Since $|\Xi_s(x,t)|\leq 2$ and 
$|\nabla \Xi_s(x,t)|\leq \|v_s\|_{L^\infty(0,T)}+\|v_1\|_{L^\infty(0,T)}$,
the second term goes to zero by dominated convergence.
The proof of (c) is a direct application of \cite[Lemma 2.6]{vald-1s}, since
\begin{align*}
\A_4(t,\eps,s)= \Big\|Q_s\Big(\frac{x-x_s(t)}{\eps}\Big) 
-Q_1\Big(\frac{x-x_s(t)}{\eps}\Big)\Big\|_{{\mathcal H}_\eps^s}^2
= \|Q_s-Q_1\|_{H^s}^2,
\end{align*}
concluding the proof.
\end{proof}

\noindent
Based upon the previous conclusions, the proof of Theorem~\ref{thm:dyn} is complete.

\bigskip
\bigskip
%

\bigskip

\end{document}